\numberwithin{equation}{section}
\newtheorem{theorem}{Theorem}[section]
\newtheorem{lemma}[theorem]{Lemma}
\theoremstyle{definition}
\theoremstyle{remark}
\title{Asymptotics for smooth numbers in short intervals}
\author{Khalid Younis}
\address{Mathematics Institute, Zeeman Building, University of Warwick, Coventry CV4 7AL,
United Kingdom}
\email{khalid.younis@warwick.ac.uk}
\begin{document}

\begin{abstract}A number is said to be $y$-smooth if all of its prime factors are less than or equal to $y.$  For all $17/30<\theta\leq 1,$ we show that the density of $y$-smooth numbers in the short interval $[x,x+x^{\theta}]$ is asymptotically equal to the density of $y$-smooth numbers in the long interval $[1,x],$ for all $y \geq \exp((\log x)^{2/3+\varepsilon}).$ Assuming the Riemann Hypothesis, we also prove that for all $1/2<\theta\leq 1$ there exists a large constant $K$ such that the expected asymptotic result holds for $y\geq (\log x)^{K}.$

 Our approach is to count smooth numbers using a Perron integral, shift this to a particular contour left of the saddle point, and employ a zero-density estimate of the Riemann zeta function.
\end{abstract}
\maketitle
\section{Introduction}
A positive integer is \(y\)\emph{-smooth} (or \(y\)-\emph{friable}) if all of its prime factors are less than or equal to \(y.\)   Of central importance to understanding the distribution of smooth numbers is to first count (at least approximately) how many there are in a large interval \([1,x].\) This question has received significant attention over the last century, and has profited from both combinatorial and analytic approaches (see the surveys \cite{hilten2} and \cite{granv}).
 
 To further understand their distribution, it is natural to ask how many smooth numbers there are in an interval \((x,x+h],\) where \(h\) is large but much smaller than \(x.\) 
If the smooth numbers were distributed not too erratically, then, for \(h\) and \(y\) in a suitable range depending on \(x\), one might expect  the density of smooth numbers in short intervals and in long intervals to be the same. That is, as \(x\rightarrow \infty,\)  
\begin{equation}\label{asym}\frac{|\{n\in(x,x+h]: n \textrm{ is } y\textrm{-smooth}\}|}{h}\sim \frac{|\{n\in [1,x]:  n \textrm{ is } y\textrm{-smooth}\}|}{x}. \end{equation}  

By first counting numbers with a single very large prime factor and then employing a `recursive identity' for the set of smooth numbers, the first result establishing a relation of the form  \eqref{asym}  was  by Hildebrand \cite{hild2} on the range 
\[x/y^{5/12}\leq h\leq x \quad \textrm{ and } \quad
e^{(\log \log x)^{5/3+\varepsilon}} \leq y \leq x.\] The restriction on \(h\) stems from the well-known estimates of Huxley on the prime number theorem in short intervals, and the \(y\) bound comes from an application of the best known error terms in the prime number theorem.  It has been suggested, for instance in \cite{friedgran}, \cite[\S 5]{hilten2}, and \cite[\S 4]{granv}, that ideas of Granville \cite{granv2} may enable the \(5/12\) to be improved to \(1-\varepsilon.\)

Hildebrand and Tenenbaum \cite{hilten} later proved an asymptotic of the shape  \eqref{asym} (though with the right-hand side  multiplied by a positive quantity \(\alpha=\alpha(x,y)\)  known as the \emph{saddle point}, which we define later) valid in  a wide range of \(y\) (which is slightly more tricky to state) and for which it is necessary, but not sufficient, to have
\[x/ e^{(\log x)^{3/5}}\leq h\leq x.\]
Friedlander and Granville \cite{friedgran} later showed, by exploiting an `almost-all' result for smooth numbers, the asymptotic \eqref{asym}  holds  in the range 
\[\sqrt{x}y^2e^{(\log x)^{1/6}}\leq h\leq x \quad \textrm{ and } \quad
e^{(\log x)^{5/6+\varepsilon}} \leq y \leq x.\]

Let us briefly discuss the current best results concerning lower bounds for the number of \(y\)-smooth numbers in \((x,x+h].\) A more thorough discussion is presented in the surveys \cite{hilten2} and \cite{granv}, as well as in the following works. 
 The breakthrough work of Matom\"{a}ki  and Radziwi\l\l   ~ on  multiplicative functions in short intervals \cite{matrad} showed that there are at least \(\sqrt{x}/(\log x)^{4}\) many \(y\)-smooth numbers for \(y=x^{\varepsilon}\) and \(h=C(\varepsilon)\sqrt{x}.\) Matom\"{a}ki \cite{matom} showed that for
 \(y=e^{(\log x)^{2/3}(\log \log x)^{4/3+\varepsilon}}\) and \(h=x^{1/2+o(1)}\) a lower bound of magnitude \(x^{1/2-\varepsilon}\) is attainable. Earlier work of Xuan \cite{xuan} demonstrated the existence of such smooth numbers in essentially this regime. Conditional on the Riemann Hypothesis, Soundararajan  \cite{sound2} proved that one may take \(y=e^{5\sqrt{\log x \log \log x}}\) and \(h=x^{1/2+o(1)},\) also for a lower bound of magnitude \(x^{1/2-\varepsilon}.\)  These works explicitly describe their \(o(1)\) terms. 
 
 Returning to asymptotic results, suppose we wanted the short interval to be rather small, say  \(h= x^{\theta}\) for some fixed \(0<\theta<1.\)  We see that the  results of Hildebrand, and of Hildebrand and Tenenbaum, stated earlier offer nothing in this scenario. On the other hand, the Friedlander and Granville result can handle this (with \(\theta\) as low as \(1/2+o(1)\))  provided that \( e^{(\log x)^{5/6+\varepsilon}}\leq y\leq x^{1/4-\varepsilon}.\) 
In this article, we improve on this with regards to the smoothness by showing that we may take \(y\) as low as \(e^{(\log x)^{2/3+\varepsilon}}\) and still have asymptotic \eqref{asym} hold for any \(17/30<\theta\leq 1.\)  Here and throughout, we write \(\Psi(x,y)=|\{n\in[1,x]: n \textrm{ is } y\textrm{-smooth}\}|\) and \(u=(\log x)/\log y.\)
\begin{theorem}\label{main}
Let \(17/30<\theta\leq 1.\) There exists a constant \(C=C(\theta)>0\)  such that the estimate \[\frac{\Psi(x+h,y)-\Psi(x,y)}{h}=\frac{ \Psi(x,y) }{x}\left(1+O_\theta\left(\frac{\log (u+1)}{\log y}\right)\right)\] holds uniformly for \begin{equation}\label{yandh} x^{\theta}\leq h\leq x \quad \textrm{ and } \quad
e^{C(\log x)^{2/3}(\log\log x)^{4/3}} \leq y \leq 2x.\end{equation}
\end{theorem}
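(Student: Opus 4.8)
The plan is to evaluate the short-interval count by Perron's formula for the Dirichlet series of \(y\)-smooth integers, \(\zeta(s,y):=\prod_{p\le y}(1-p^{-s})^{-1}\). Writing \(\alpha=\alpha(x,y)\) for the saddle point (the positive solution of \(\phi_1(\alpha):=\sum_{p\le y}\frac{\log p}{p^\alpha-1}=\log x\)) and \(\phi_2(\alpha):=\sum_{p\le y}\frac{p^\alpha\log^2 p}{(p^\alpha-1)^2}\) for its curvature, I start from the truncated identity
\[\Psi(x+h,y)-\Psi(x,y)=\frac{1}{2\pi i}\int_{\alpha-iT}^{\alpha+iT}\zeta(s,y)\,\frac{(x+h)^s-x^s}{s}\,ds+O(\mathcal E_T),\]
where the kernel \(\frac{(x+h)^s-x^s}{s}=\int_x^{x+h}t^{s-1}\,dt\) is entire and satisfies \(\bigl|\frac{(x+h)^s-x^s}{s}\bigr|\ll hx^{\sigma-1}\min\bigl(1,x/(h|t|)\bigr)\) on \(\mathrm{Re}(s)=\sigma\le 1\); this extra decay beyond \(x^\sigma/|t|\) once \(|t|\gg x/h\) is what permits \(h\) as small as \(x^\theta\). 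The error \(\mathcal E_T\) forces \(T\) to be a little above \(x/h\), say \(T=(x/h)(\log x)^{O(1)}\). I will use the standard facts (Hildebrand--Tenenbaum) that \(1-\alpha\asymp \xi(u)/\log y\ll \log(u+1)/\log y\) and \(\Psi(x,y)\asymp x^\alpha\zeta(\alpha,y)/\sqrt{\phi_2}\).

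The main term comes from a short central segment \(|t|\le T_0\), with \(T_0\) a small power of \(\log x\) over \(\sqrt{\phi_2}\). There \(\frac{(x+h)^s-x^s}{s}=hx^{s-1}\bigl(1+O(|s|h/x)\bigr)\) with negligible error (since \(|s|h/x\le T_0x^{\theta-1}\to0\)), so this part equals \(\frac hx\cdot\frac1{2\pi i}\int_{|t|\le T_0}\zeta(\alpha+it,y)x^{\alpha+it}\,dt\); comparing the integrand with \(\zeta(\alpha+it,y)x^{\alpha+it}/(\alpha+it)\) through \((\alpha+it)^{-1}=\alpha^{-1}+O(|t|)\) and invoking the Hildebrand--Tenenbaum saddle-point evaluation of \(\Psi(x,y)\) (the term linear in \(t\) in \(\log\zeta(\alpha+it,y)+it\log x\) vanishing by the definition of \(\alpha\)) gives \(\frac hx\Psi(x,y)\bigl(1+O(1-\alpha)\bigr)=\frac hx\Psi(x,y)\bigl(1+O(\log(u+1)/\log y)\bigr)\), up to the standard negligible tail of the \(\Psi(x,y)\) integral.

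The remaining range \(T_0<|t|\le T\) must be shown to be \(o\bigl(\tfrac hx\Psi(x,y)\bigr)\). Keep \(\mathrm{Re}(s)=\alpha\) for \(T_0<|t|\le T_1\) with \(T_1\) a fixed power of \(\log x\): here \(\log|\zeta(\alpha+it,y)|-\log\zeta(\alpha,y)=-2\sum_{p\le y}\sin^2(\tfrac12t\log p)p^{-\alpha}+O(1)\), and bounding the sine sum below range-by-range (the Gaussian regime \(|t|\le\pi/\log y\) and the usual elementary ranges, as in Hildebrand--Tenenbaum) against the kernel factor \(\min(hx^{\alpha-1},x^\alpha/|t|)\) disposes of this piece. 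For \(T_1<|t|\le T\) --- the crux --- shift the contour to the line \(\mathrm{Re}(s)=\sigma_0\) to the left of the saddle (joined to \(\mathrm{Re}(s)=\alpha\) by short, harmless horizontal segments at height \(\pm iT_1\)). On this line \(\log|\zeta(\sigma_0+it,y)|=\sum_{p\le y}p^{-\sigma_0}\cos(t\log p)+O(1)\); with \(\tfrac12<\sigma_0<1\) and \(\alpha-\sigma_0\gg 1-\theta\), a typical \(t\) gives \(|\zeta(\sigma_0+it,y)|=O(1)\), contributing \(\ll hx^{\sigma_0-1}T=hx^{\alpha-1}\cdot x^{\sigma_0-\alpha}T\ll\varepsilon hx^{\alpha-1}\), which beats \(\tfrac hx\Psi(x,y)\) precisely when \(\alpha-\sigma_0>1-\theta\). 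The \(t\) for which \(|\zeta(\sigma_0+it,y)|\) is large are handled by expressing \(\sum_{p\le y}p^{-s}\) through the explicit formula --- a second Perron integral for the truncated \(-\zeta'/\zeta(s,y)\), shifted past the pole at \(s=1\) and the zeros \(\rho=\beta+i\gamma\) of \(\zeta\) with \(\beta\gtrsim\sigma_0\) --- giving \(\sum_{p\le y}p^{-s}\ll y^{1-\sigma_0}/|t|+\sum_\rho y^{\beta-\sigma_0}/\bigl((1+|\gamma-t|)\log y\bigr)\); a zero-density estimate \(N(\sigma,T)\ll T^{A(1-\sigma)}(\log T)^B\) (together with mean-value bounds for the Dirichlet polynomials involved, and with the Vinogradov--Korobov zero-free region, which caps \(\beta\) at \(1-c(\log T)^{-2/3}(\log\log T)^{-1/3}\) and so saves a factor \(y^{-c\log y/((\log x)^{2/3}(\log\log x)^{1/3})}\) in each term) then bounds the measure of these \(t\) enough that, weighted by \(hx^{\sigma_0-1}\), they too contribute \(o\bigl(\tfrac hx\Psi(x,y)\bigr)\).

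The main obstacle is exactly this large-\(|t|\) estimate, and it is where both hypotheses of the theorem originate. One must choose \(\sigma_0,T_1,T\) so that, at once: (i) \(1-\theta<\alpha-\sigma_0\) while \(\sigma_0>\tfrac12\), already needing \(\theta>\tfrac12\); (ii) after inserting the zero-density exponent \(A\) (the Ingham--Huxley bounds near \(\sigma=1\) being the relevant input), the bad-\(t\) contribution still beats \(\tfrac hx\Psi(x,y)\) --- tracing the powers of \(x\) through this inequality is what replaces the \(\theta>7/12\) of the classical prime short-interval problem by \(\theta>17/30\), the gain being that the zeros now enter with the weight \(y^{\beta-\sigma_0}\) rather than \(x^{\beta-1}\) and \(y\) is smaller than \(x\); and (iii) the Vinogradov--Korobov saving, of size \((\log x)^{-c}\) for some \(c=c(\theta,C)>0\) once \(\log y\ge C(\log x)^{2/3}(\log\log x)^{4/3}\) with a large enough constant \(C\), outweighs the \((\log x)^{O(1)}\) losses in the explicit-formula step --- which is what fixes the smoothness threshold. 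Finally, since \(\tfrac hx\Psi(x,y)\) is itself small (of size roughly \(h\rho(u)\)), \(T\) cannot be taken much below \(x/h\), which is exactly why a genuine zero-density input, and not merely a zero-free region, is required.
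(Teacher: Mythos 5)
Your broad strategy — Perron, split the integral into a central peak giving the main term, a medium range on $\Re(s)=\alpha$ controlled by the Hildebrand–Tenenbaum decay bound, and a far range shifted left and beaten down with a zero-density estimate — matches the paper's outline, and your treatment of the central and medium ranges is essentially what the paper does. However, your far-range argument has a genuine gap.

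You shift to a \emph{fixed} vertical line $\Re(s)=\sigma_0$ and then propose to separate ``typical'' $t$ (where $|\zeta(\sigma_0+it,y)|=O(1)$) from ``bad'' $t$, controlling the latter by a zero-density bound on their measure, ``weighted by $hx^{\sigma_0-1}$.'' This cannot work as stated. Near a zero $\rho=\beta+i\gamma$ of $\zeta$ with $\beta$ close to $1$ and $T_1<|\gamma|\le T$, the explicit-formula bound you write down gives $\log|\zeta(\sigma_0+it,y)|\gg y^{\beta-\sigma_0}/\log y$ for $t$ within $O(1)$ of $\gamma$, so $|\zeta(\sigma_0+it,y)|$ can be doubly exponential in $\log y$. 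Multiplying a tiny measure by $hx^{\sigma_0-1}$ does not remotely compensate for this blow-up; you need \emph{pointwise} control of $\zeta(s,y)$ on the contour near every such zero, and a fixed vertical line passes straight through these danger zones. The paper's resolution is to replace the fixed line by a contour $\Gamma$ that, around each zero $\rho=1-\nu+i\gamma$ with small $\nu$, retreats rightward to $\Re(s)=1-(1-\varepsilon)\nu$ over a window of height $\asymp y^{\nu}$ (these are the paths $\Gamma_\rho$), so that the contour always stays a horizontal distance $\gtrsim\varepsilon\nu$ from $\rho$ and $\zeta(s,y)$ remains comparable to $\zeta(\alpha+it,y)$ there, up to a power saving $x^{-(1-\varepsilon)(\alpha-\sigma)}$ coming from the $x^s$ factor. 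The zero-density estimate then controls not the \emph{measure} of bad $t$ but the \emph{total contribution of all the $\Gamma_\rho$ pieces}, via the sum $\sum_\rho x^{-(1-O(\varepsilon))\nu}$; this is a structurally different use of zero density than the ``exceptional-set'' argument you describe, and it is precisely what bypasses the exponential blow-up. Two further remarks: (a) your reading of the $17/30$ threshold is slightly off — it is the same threshold as in the prime short-interval problem, coming directly from the Guth--Maynard exponent $30/13$ (improving Huxley's $12/5$, which would give $7/12$); there is no further gain from $y$ being smaller than $x$; and (b) your proposal does not address $y$ near $x$, where the saddle-point contour argument breaks down (already on the $\Gamma_0^\pm$ pieces) and the paper switches to a direct inclusion–exclusion over large prime factors, reducing to primes in short intervals.
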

The analogous result for primes, that is, the number of primes in the interval \((x,x+h]\) for \(x^\theta\leq h\leq x\) being asymptotic to \(h/\log x,\) was first established by Hoheisel for some \(0<\theta<1,\) following from the explicit formula and an explicit (weaker) version of the zero-density estimate for the Riemann zeta function as in \eqref{zerodensity}.
 The  exponent \((30+\varepsilon)/13\) in \eqref{zerodensity} was obtained in recent breakthrough work of Guth and Maynard \cite{guthmayn}, leading to \(\theta>17/30\) being permissible. This improved the long-standing exponent of \((12+\varepsilon)/5\) due to Huxley \cite{hux} which allowed \(\theta>7/12,\) as well as a result of Heath-Brown \cite{heath} allowing \(\theta=7/12-o(1).\)
 
In some applications, such as in upcoming work of the author \cite{you}, it is desirable to have an error term which decays far faster than that in Theorem \ref{main}. We are able to do this when we allow the main term to be replaced by a more complicated expression which we define later on.

\begin{theorem}\label{mainfunc} Let \(17/30<\theta\leq 1.\) There exists a constant  \(c=c(\theta)>0,\)  as well as a  function \(f_\theta(x,y)\) which is independent of \(h,\) such that the estimate \[\frac{\Psi(x+h,y)-\Psi(x,y)}{h}=
f_\theta(x,y)+O_\theta(E),\] with the error term  \[E=\frac{\Psi(x,y)}{x}\left(e^{-c((\log x)/\log \log x)^{1/3}}+\frac{h \log(u+1) }{x \log y}+\frac{y}{x\log x}\right),\]  holds uniformly in the range \eqref{yandh}.  \end{theorem}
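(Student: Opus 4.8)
The plan is to follow the route announced in the abstract, drawing on the standard Hildebrand--Tenenbaum saddle-point analysis of \(\Psi(x,y)\) \cite{hilten}. Write \(\zeta(s,y):=\prod_{p\le y}(1-p^{-s})^{-1}\) for the Dirichlet series of the \(y\)-smooth numbers; it is holomorphic and nonvanishing in \(\Re s>0\). By the truncated Perron formula, for \(T\) a fixed large power of \(x\),
\[
\Psi(x+h,y)-\Psi(x,y)=\frac{1}{2\pi i}\int_{\alpha-iT}^{\alpha+iT}\zeta(s,y)\,\frac{(x+h)^{s}-x^{s}}{s}\,ds+O\!\left(\frac{\Psi(x,y)\,x^{o(1)}}{T}\right),
\]
where \(\alpha=\alpha(x,y)\in(0,1)\) is the saddle point; the error is negligible once, say, \(T\ge x^{2}\). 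The one elementary fact used throughout is that on \(\Re s=\alpha\) one has \(|(x+h)^{s}-x^{s}|/|s|\ll\min(h\,x^{\alpha-1},\,x^{\alpha}/|s|)\), so the integrand attains its full size \(\asymp h\,x^{\alpha-1}|\zeta(s,y)|\) only for \(|\Im s|\le x/h\le x^{1-\theta}\).

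I then shift the contour to a curve \(L=L_{0}\cup L_{1}\): a short central arc \(L_{0}=\{\Re s=\alpha,\ |\Im s|\le\eta\}\), with \(\eta=\eta(x,y)\) comparable (up to slowly varying factors) to the width of the saddle-point Gaussian, together with a tail \(L_{1}\) that I allow to bend to the left of \(\alpha\) (down towards \(\Re s=\tfrac12+\delta\)); no residue is crossed since \(\zeta(\cdot,y)\) is holomorphic on \(\Re s>0\). To define the main term, use \((x+h)^{s}-x^{s}=s\int_{x}^{x+h}t^{s-1}\,dt\) and Fubini, so that the contribution of \(L_{0}\) equals \(\int_{x}^{x+h}\big(\tfrac{1}{2\pi i}\int_{L_{0}}\zeta(s,y)t^{s-1}\,ds\big)\,dt\), and set
\[
f_{\theta}(x,y):=\frac{1}{2\pi i}\int_{L_{0}}\zeta(s,y)\,x^{s-1}\,ds,
\]
which depends only on \(x,y\) (through \(\alpha,\eta,L_{0}\)), not on \(h\), and satisfies \(f_{\theta}(x,y)\asymp\Psi(x,y)/x\) by the classical saddle-point estimate — consistent with Theorem \ref{main}. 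It remains to bound (a) the discrepancy between the \(L_{0}\)-contribution and \(h\,f_{\theta}(x,y)\), and (b) the contribution of \(L_{1}\).

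For (a), write \(t=x(1+v)\) with \(0\le v\le h/x\) and expand \((t/x)^{s-1}-1=(s-1)v+O(|s-1|^{2}v^{2})\); the linear term contributes \(\tfrac{h}{4\pi i\,x}\int_{L_{0}}(s-1)\zeta(s,y)x^{s-1}\,ds\), and since \(|s-1|\le|1-\alpha|+\eta\ll\log(u+1)/\log y\) on \(L_{0}\) and \(\int_{L_{0}}|\zeta(s,y)|x^{\alpha-1}|ds|\ll\Psi(x,y)/x\), this is \(\ll\tfrac{h}{x}\cdot\tfrac{\log(u+1)}{\log y}\cdot\tfrac{\Psi(x,y)}{x}\), the middle term of \(E\); the quadratic remainder is of strictly smaller order. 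Step (b) is the heart of the argument. I estimate \(\zeta(s,y)\) on \(L_{1}\) by comparison with the Riemann zeta function: since \(\log(\zeta(s,y)/\zeta(s))=-\sum_{p>y}p^{-s}+O(1)\) and the prime number theorem (with a Vinogradov--Korobov error term) controls \(\sum_{p>y}p^{-s}\) in the relevant region, bounding \(\zeta(s,y)\) on \(L_{1}\) reduces — after a further subdivision of \(L_{1}\) by the size of \(|\Im s|\) — to bounding \(\zeta(s)\) just to the left of \(\Re s=\alpha\). I do this using the Vinogradov--Korobov zero-free region together with the zero-density estimate of Guth and Maynard \cite{guthmayn} (exponent \((30+\varepsilon)/13\)), fed through a dyadic decomposition in \(|\Im s|\) up to height \(x^{1-\theta}\), exactly as in the proof of the prime number theorem in intervals \((x,x+x^{\theta}]\). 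The outcome is that the \(L_{1}\)-contribution is \(\ll\tfrac{\Psi(x,y)}{x}\,h\,e^{-c((\log x)/\log\log x)^{1/3}}\). This is what fixes the hypotheses \eqref{yandh}: the inequality \((1-\theta)\cdot\tfrac{30}{13}<1\), i.e.\ \(\theta>17/30\), is precisely what makes the density estimate effective out to height \(x^{1-\theta}\), while the lower bound \(y\ge e^{C(\log x)^{2/3}(\log\log x)^{4/3}}\) with \(C\) large is what both places \(\alpha\) inside the zero-free region up to that height and gives the prime number theorem enough room to control \(\sum_{p>y}p^{-s}\) — the residual contribution of primes exceeding \(y\) supplying the term \(\tfrac{\Psi(x,y)}{x}\cdot\tfrac{y}{x\log x}\) in \(E\), which dominates when \(y\) is close to \(x\). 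Combining (a), (b) and the Perron error yields Theorem \ref{mainfunc}.

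I expect step (b) to be the main obstacle. One must simultaneously make the comparison \(\zeta(s,y)\approx\zeta(s)\) effective for \(\Re s<1\), where \(\sum_{p>y}p^{-s}\) is not absolutely convergent and genuine cancellation from the prime number theorem is required, and combine the zero-free region with the Guth--Maynard density estimate across all dyadic heights up to \(x^{1-\theta}\), carrying along the weights \(x^{\Re\rho}\min(h/x,\,1/|\Im\rho|)\) so that the net saving survives; it is the interplay of these two inputs — the zero-free region forcing \(y\) to be moderately large, the density exponent forcing \(\theta>17/30\) — that pins down the precise ranges in \eqref{yandh}.
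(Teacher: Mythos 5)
The high-level shape of your sketch (Perron, shift into a zero-free region, apply Guth--Maynard, extract the main term from a short central arc) is the right shape, but several essential steps are missing or would not work as written, and the paper's actual proof has to do substantially more.

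First, the comparison \(\zeta(s,y)\approx\zeta(s)\exp(-\sum_{p>y}p^{-s}+O(1))\) does not make sense to the left of the \(1\)-line, which is where the entire tail contour lives: the sum \(\sum_{p>y}p^{-s}\) does not converge there, and the analytic continuation \(\zeta(s,y)/\zeta(s)\) has poles at the zeros of \(\zeta\). You flag this as ``the main obstacle,'' but you do not resolve it, and it is genuinely the crux. The paper sidesteps it entirely by never factoring out \(\zeta(s)\): it controls \(\log|\zeta(s,y)|\) through the finite (hence everywhere-convergent) sum \(\frac{-\zeta'}{\zeta}(s,y)=\sum_{p\le y}\frac{\log p}{p^s-1}\), and Lemma \ref{shifting} bounds \(\sum_{n\le y}\Lambda(n)n^{-s}\) by a Perron integral shifted through a \emph{local} zero-free window of width \(\eta\) around \(\Im s=t_0\). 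This reverses your picture: instead of extracting the tail over \(p>y\), one bounds the logarithmic derivative restricted to \(p\le y\), which is harmless and converges for all \(\Re s>0\).

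Second, a concrete contour has to be built that narrowly skirts zeros: for each zero \(\rho=1-\nu+i\gamma\), the path \(\Gamma_\rho\) sits at \(\Re s=1-(1-\varepsilon)\nu\) over an interval of imaginary height of length \(\asymp y^\nu\) (or \(y^{3\varepsilon\nu}\) in the smoothed variant), which is exactly the window needed for Lemma \ref{shifting} to give cancellation. ``A dyadic decomposition in \(|\Im s|\)'' does not capture this. Without the carefully-chosen horizontal extent of \(\Gamma_\rho\) the power-saving from shifting left and the zero-density count do not cooperate.

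Third, the saddle-point/contour route only proves Theorem \ref{mainfunc} for \(y\le x^{\varepsilon}\) (Section \ref{contour}). For larger \(y\) the paths \(\Gamma_\rho\) become too long and Lemma \ref{contzeros} fails; the paper inserts a smooth majorant (Section \ref{smoothing}) to stretch this to \(y\le x^{1-2\varepsilon}\), but only when \(h\le x^{0.99}\). For \(y\) near \(x\) (and for the remaining \(h\)) the analytic approach is abandoned entirely in favour of inclusion--exclusion over large prime factors combined with the prime number theorem in short intervals (Section \ref{largey}). This is where the error term \(\frac{y}{x\log x}\) in \(E\) actually comes from (see Lemma \ref{weightedcount} and the estimate after \eqref{rbound}); it does not fall out of the contour estimates, and your attribution of it to ``primes exceeding \(y\)'' is not a proof. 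The function \(f_\theta(x,y)\) then has to be shown consistent across the regimes, which Section \ref{proofthms} does by comparing the two estimates at a fixed \(h\).

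Fourth, your suggested truncation \(T\ge x^2\) is incompatible with the zero-density step. To make Guth--Maynard's count \(\ll_\varepsilon H^{(1-\sigma)(30+\varepsilon)/13}\) yield a saving against \(x^{-(1-\varepsilon)\nu}\), one needs \(H\le x^{13/30-5\varepsilon}\) (see Lemma \ref{contzeros}), while the Perron error forces \(H\gtrsim x/h\ge x^{1-\theta}\); these are reconciled precisely when \(\theta>17/30\), and the paper's choice \(H=xe^{(\log x)^{9/10}}/h\) threads this needle. Taking \(T=x^2\) puts the bulk of the contour far outside the range where the density estimate gives anything, and the contribution there cannot then be bounded.
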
 
Naturally, we obtain a stronger result if we assume the Riemann Hypothesis.
\begin{theorem}\label{mainriemann} Assume the Riemann Hypothesis is true. Let \(1/2<\theta\leq 1.\) There exists a constant  \(K=K(\theta)>2,\)  as well as a  function \(f_\theta(x,y)\) which is independent of \(h,\) such that the estimates
\[\frac{\Psi(x+h,y)-\Psi(x,y)}{h}=\alpha\frac{ \Psi(x,y) }{x}\left(1+O_\theta\left(\min\left\{\frac{1}{u}+\frac{h\log(u+1)}{x\log y},\frac{\log(u+1)}{\log y}\right\}\right)\right)\] and 
\[\frac{\Psi(x+h,y)-\Psi(x,y)}{h}=
f_\theta(x,y)+ O_\theta\left(\frac{\Psi(x,y)}{x}\left(\frac{1}{x^{1/K}}+\frac{h\log(u+1)}{x\log y}+\frac{y}{x\log x}\right)\right)\]
hold uniformly for \[ x^{\theta}\leq h\leq x \quad \textrm{ and } \quad
(\log  x)^K \leq y \leq 2x.\]

For any \(L>2,\) the result also holds for \((\log x)^{L}\leq y \leq 2x\) and some \(\theta=\theta(L)<1.\) 
\end{theorem}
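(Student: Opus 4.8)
The plan is to evaluate the short‑interval count by a truncated Perron integral on the vertical line through (or, for the large‑$|t|$ range, just to the left of) the saddle point $\alpha=\alpha(x,y)$, and to control that integral away from the saddle by inserting the Riemann Hypothesis into the explicit formula for $\sum_{p\le y}p^{-s}$. Throughout, $\zeta(s,y):=\prod_{p\le y}(1-p^{-s})^{-1}=\sum_{n\text{ is }y\text{-smooth}}n^{-s}$, which is holomorphic and non‑vanishing in $\Re s>0$.

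First I would apply the effective Perron formula to $\Psi(x+h,y)-\Psi(x,y)=\sum_{x<n\le x+h}\mathbf{1}_{n\text{ is }y\text{-smooth}}$ to get
\[
\Psi(x+h,y)-\Psi(x,y)=\frac{1}{2\pi i}\int_{\alpha-iT}^{\alpha+iT}\zeta(s,y)\,\frac{(x+h)^s-x^s}{s}\,ds+O(\mathcal{E}),
\]
with $T$ a small power of $x$. The decisive point is that $y\ge(\log x)^K$ forces the equation $\sum_{p\le y}(\log p)/(p^\alpha-1)=\log x$ to be solved by $\alpha=1-1/K+o(1)$, so that for $K>2$ the contour lies in $\{\Re s>1/2\}$, where RH is available; this is exactly where the hypotheses $K>2$ (and $L>2$ in the last assertion) enter. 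For $|t|\le 1/\log y$ I would use the Hildebrand--Tenenbaum saddle‑point expansion of $\zeta(\alpha+it,y)$ together with $\tfrac{(x+h)^s-x^s}{s}=hx^{s-1}\bigl(1+O(|s|h/x)\bigr)$; this isolates the $h$‑independent quantity $f_\theta(x,y)$ (morally a truncation of $\tfrac{1}{2\pi i}\int_{(\alpha)}\zeta(s,y)x^{s-1}\,ds$), whose leading term is $\alpha\,\Psi(x,y)/x$ by the same machinery applied to $\Psi(x,y)=\tfrac{1}{2\pi i}\int_{(\alpha)}\zeta(s,y)\frac{x^s}{s}\,ds$. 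Keeping $f_\theta$ yields the second display; replacing it by $\alpha\,\Psi(x,y)/x$ yields the first, with $1/u$ measuring the accuracy of the saddle approximation, $h\log(u+1)/(x\log y)$ coming from the $O(|s|h/x)$ correction (the change of smooth density across $[x,x+h]$), $y/(x\log x)$ from the Perron and endpoint terms, and $x^{-1/K}$ absorbing the rest; the $\min\{\cdots\}$ in the first display reflects that in part of the range the cruder bound $\log(u+1)/\log y$ from Theorem~\ref{main} already suffices.

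The work is in the range $|t|>1/\log y$, where the short interval costs a power $h/x=x^{\theta-1}$ over the long‑interval problem that no pointwise bound on $|\zeta(\alpha+it,y)|$ along $\Re s=\alpha$ can recover. For $1/\log y<|t|\le(\log x)^K$ I would stay on $\Re s=\alpha$ and write $|\zeta(\alpha+it,y)|=\exp\bigl(\Re\sum_{p\le y}p^{-\alpha-it}+O(1)\bigr)$, estimating the sum by the explicit formula $\sum_{p\le y}p^{-s}=\operatorname{li}(y^{1-s})-\sum_\rho\operatorname{li}(y^{\rho-s})+O(1)$: on RH each $\operatorname{li}(y^{\rho-s})$ has modulus $\ll y^{1/2-\alpha}/((|t-\gamma|+1)\log y)$, so summing over $|\gamma|\le T$ costs only $(\log x)^{O(1)}$, and since $\alpha>1/2$ makes $y^{1/2-\alpha}=(\log x)^{-K(\alpha-1/2)}$ a negative power of $\log x$, the zero‑sum is negligible; what is left is $\Re\,\operatorname{li}(y^{1-\alpha-it})\ll y^{1-\alpha}/(|t|\log y)=(1-\alpha)u/|t|$, which is $\le u/4$ once $|t|\ge 4$, giving $|\zeta(\alpha+it,y)|\ll\zeta(\alpha,y)\,e^{-u/2}$ --- an admissible contribution since $u=\log x/\log y\gg\log\log x$. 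For $(\log x)^K<|t|\le T$ I would shift the contour leftwards to a fixed line $\Re s=\beta$ with $1/2<\beta<\theta-(1-\alpha)$ --- the ``particular contour left of the saddle point'' --- which exists precisely because $\theta>1/2$ and $1-\alpha$ is small; this trades the prefactor $x^\alpha$ for $x^\beta$, a genuine power saving, while the horizontal connectors at height $\pm(\log x)^K$ (and all of $\Re s=\beta$ with $|t|\ge(\log x)^K\ge y^{1-\beta}/\log y$) are harmless because there the same explicit‑formula‑plus‑RH estimate gives $|\zeta(s,y)|=O(1)$.

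The chief obstacle is thus this away‑from‑saddle analysis: one must arrange the cut‑off height $(\log x)^K$ and the shifted line $\Re s=\beta$ so that the zero‑sum, summed up to $T\asymp x$, is beaten by the savings $y^{-(\sigma-1/2)}=(\log x)^{-K(\sigma-1/2)}$ (forcing $K$ large relative to the fixed logarithmic powers, and through $\beta<\theta-(1-\alpha)$ forcing $\theta>1/2$, or $\theta=\theta(L)$ near $1$ when only $y\ge(\log x)^L$ is assumed); so that the shifted‑line integrand $x^\beta\cdot O(1)$ integrated against $|t|^{-1}$ up to $T$ is dominated by the main term $\asymp hx^{\alpha-1}\zeta(\alpha,y)/\sqrt{\phi_2}$, where $\phi_2=\tfrac{\partial^2}{\partial\alpha^2}\log\zeta(\alpha,y)$; and so that the horizontal segments are taken where $\zeta$ has already decayed. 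With all of this balanced, the Perron error $\mathcal{E}$ is absorbed by taking $T$ as large as $x$, which is permissible under RH exactly because the zero‑sum remains small for $|t|\le x$ once $K>2$.
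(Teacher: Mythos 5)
Your central observation — that $y\ge(\log x)^K$ forces $\alpha=1-1/K+o(1)>1/2$, so that RH becomes usable on and to the left of the line $\Re s=\alpha$ — is indeed the right starting point, and your explicit-formula bound for $\sum_{p\le y}p^{-s}$ (RH giving $\operatorname{li}(y^{\rho-s})\ll y^{1/2-\Re s}/((|t-\gamma|+1)\log y)$, summed to $(\log x)^{O(1)}$ losses) is a perfectly reasonable substitute for the paper's route, which instead bounds $\sum_{n\le y}\Lambda(n)n^{-s}$ via a small Perron integral of $-\zeta'/\zeta$ and a three-circles argument. However, your strategy has a genuine gap in the large-$y$ range, which the theorem requires up to $y\le 2x$. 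You cut off at the fixed height $|t|=(\log x)^K$ and shift to the fixed line $\Re s=\beta\in(1/2,\theta-(1-\alpha))$, arguing the horizontal connectors are harmless because $(\log x)^K\ge y^{1-\beta}/\log y$. That inequality holds when $y=(\log x)^K$, but fails catastrophically once $y$ is any fixed power of $x$: for $y=x^{9/10}$, say, $y^{1-\beta}$ is a positive power of $x$, so at height $|t|=(\log x)^K$ one still has $\Re\operatorname{li}(y^{1-\beta-it})\gg y^{1-\beta}/((\log x)^K\log y)\gg x^{c}$ for some $c>0$, and $|\zeta(\beta+it,y)|$ on the connector is $\exp(\text{power of }x)$, swamping everything. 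The transition height at which $\zeta(\sigma+it,y)$ becomes $O(1)$ on $\Re s=\sigma$ is $\asymp y^{1-\sigma}/\log y$, which grows with $y$; a contour with a $y$-independent height cut-off cannot work. The paper resolves this by (a) replacing your fixed line with the exponential interpolating contour $\Gamma_0^\pm(\delta)=1-\delta\pm i\frac{\log(u+1)}{\varepsilon^2(\log y)y^{1-\alpha}}y^{\delta}$, whose height scales as $y^{\delta}$ precisely so that it only moves left once $\zeta$ has already decayed along $\Re s=\alpha$ (Lemmas~\ref{gamma0}, \ref{gamma02}), and (b) abandoning the saddle-point method entirely for $y\ge x^{1-2\varepsilon}$, where instead it counts primes in short intervals directly via inclusion--exclusion (\S\ref{largey}, adapted under RH in \S\ref{riemannsection}) --- which is in fact where the $y/(x\log x)$ error term in the statement comes from, not from Perron endpoints as you suggest.

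Two further points you should tighten. First, ``morally a truncation of $\tfrac{1}{2\pi i}\int_{(\alpha)}\zeta(s,y)x^{s-1}\,ds$'' does not give a quantity genuinely independent of $h$, since any finite cut-off that is balanced against the Perron error must be chosen with $h$ in mind; the paper instead defines $f_\theta(x,y)$ as an integral over the full infinite contour $\Gamma_\infty\cap\Gamma_0$ and then shows in Lemma~\ref{largeheightmain} that the tail beyond the $h$-dependent cut-off $H$ is absorbed into the error. Second, your plan to ``take $T$ as large as $x$'' to absorb the Perron error is in tension with the fact that the shifted-line contribution $\ll x^{\beta}\log T$ must itself be dominated by $(h/x)\Psi(x,y)/x^{1/K}$; when $y$ is as small as $(\log x)^{2+10\varepsilon}$ one has $\Psi(x,y)=x^{1/(2+10\varepsilon)'+o(1)}$ with very little room, and the paper needs to invoke Harper's variant of Perron whose error is measured against $\Psi(x,y)$ rather than against $x$, taking $H$ a small power of $x$ instead of $x$ itself. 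Your sketch does not currently make this trade-off close.
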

Our method naturally extends to estimating character sums over smooth numbers in short intervals, as well as to smooth numbers which are coprime to some fixed integer. This is used in  upcoming work on prescribing the digits of smooth numbers \cite{you}.
\subsection{Dependence on the interval size} \label{dependence} Theorem \ref{mainfunc} can deliver quickly decaying error terms except when \(y\) (or \(h\)) is very large relative to \(x,\) and one can ask whether this is avoidable.
Suppose for example that  \(y\geq x/2\) and \(x^{9/10}\leq h\leq 2x^{9/10}.\)  Then  counting \(y\)-smooth numbers in the interval \((x,x+h]\) is equivalent to counting all numbers in the interval, and then excluding those which are of the form  \(p\) or \(2p,\) for primes \(p>y.\)  That is,
\[\Psi(x+h,y)-\Psi(x,y)= \sum_{n\in(x, x+h]}1-\sum_{p\in(x,x+h],~ p>y}1 - \sum_{p\in(x/2,(x+h)/2],~ p>y}1.\]
Now let  \(y=(x+x^{9/10})/2\) and \(h_1=2x^{9/10}.\) Using Huxley's  prime number theorem in short intervals,  \[\Psi(x+h_1,y)-\Psi(x,y)=h_1\left(1-\frac{1}{\log x}-\frac{1}{4\log y} +o\left(\frac{1}{\log x}\right)\right).\] On the other hand, if \(h_2=x^{9/10},\) then since \(2p>2y= x+h_2\) the range of the third sum is empty, so  \[\Psi(x+h_2,y)-\Psi(x,y)=h_2\left(1-\frac{1}{\log x} +o\left(\frac{1}{\log x}\right)\right).\]  
This example demonstrates that the density of smooth numbers in \((x,x+h]\) is not always independent of \(h,\) at least when we require error terms as small as \(o(1/\log x).\) 

\subsection{Background on smooth numbers}\label{background}
For this subsection, let \(2\leq y\leq x.\) 
The zeta function for  \(y\)-smooth numbers, which  factorises as an Euler product, is \[\zeta(s,y)=\sum_{n \textrm{ is } y\textrm{-smooth}}\frac{1}{n^s}=\prod_{p\leq y}\left(1+\frac{1}{p^s}+\frac{1}{p^{2s}}+\dots\right)=\prod_{p\leq y}\left(1-\frac{1}{p^s}\right)^{-1}.\] This is valid for  \(s\in \mathbb{C}\) with \(\Re(s)>0,\) as the finitely many geometric series converge here.

The set of \(y\)-smooth numbers up to \(x\) has an associated real parameter \(\alpha=\alpha(x,y)\) known as the \emph{saddle point}. This is defined as the unique  \(\sigma>0\) which minimises
\(x^\sigma\zeta(\sigma,y).\) A short argument known as Rankin's trick asserts that \[\Psi(x,y)=\sum_{n\leq x \textrm{ is } y\textrm{-smooth}} 1\leq \sum_{n \textrm{ is } y\textrm{-smooth}}\left(\frac{x}{n}\right)^\alpha =  x^\alpha\zeta(\alpha,y).\]
Hildebrand and Tenenbaum \cite[Thm.~ 1]{hilten}, using their celebrated saddle point method,  proved an asymptotic formula for \(\Psi(x,y),\) which in particular shows that the Rankin-type bound is actually close to optimal, in the sense that  \begin{equation}\label{hildtenboundzeta} x^\alpha\zeta(\alpha,y)\ll\Psi(x,y)\sqrt{\log x \log y}.\end{equation}
 As \(\alpha\) minimises \(x^\sigma\zeta(\sigma,y),\) if we take the logarithm, differentiate (see \eqref{logdiff}), and set the derivative to \(0,\) then (provided that \(\alpha>1/2,\) which is certainly the case for us) we have
\[\log x =\sum_{p\leq y} \frac{\log p}{p^\alpha -1}=\sum_{n\leq y}\frac{\Lambda(n)}{n^\alpha}+O(1).\]
As noted by La Bret\`eche and Tenenbaum \cite[\S3]{bretten},
for all sufficiently large \(x\) we have \begin{equation}\label{alphaupper}\alpha<1\end{equation} since classically (see e.g.~\cite[Eqn.~(4.21)]{rossch}) \[\sum_{p\leq x} \frac{\log p}{p -1}=\log x-\gamma+o(1)\] as \(x \rightarrow \infty,\) where \(\gamma=0.5772\dots\) denotes Euler's constant.  
An approximate formula for the saddle point was obtained by Hildebrand and Tenenbaum \cite[Lemma 2]{hilten}  \begin{equation}\label{alpha}
\alpha=1-\frac{\log (u\log( u+1))+O(1)}{\log y},\end{equation}  for \(\log x<y\leq x,\)    recalling that \(u=(\log x)/\log y.\)  With \(y\leq x\) in the range \eqref{yandh}, observe that \(\alpha\) is very close to 1.  

From the triangle inequality, it is clear that \(|\zeta(\alpha+it,y)|\leq \zeta(\alpha,y)\) (and hence, as \(\zeta(\alpha,y)\) is the minimum along the positive real axis, \(\alpha\) is called the \emph{saddle point}). 
Hildebrand and Tenenbaum  \cite[Lemma 8]{hilten} provided quantitative bounds, which were pivotal to their saddle point method, showing for instance that for  \(\log x< y\leq x\) suitably large in terms of a fixed \(\varepsilon>0,\) one has
 \begin{equation}\label{quantbounds}\frac{|\zeta(\alpha+it,y)|}{\zeta(\alpha,y)}\ll_\varepsilon e^{-c_0u t^2/(t^2+(1-\alpha)^2)}\end{equation} for \(1/\log y\leq |t|\leq e^{(\log y)^{3/2-\varepsilon}}\) and some constant \(c_0>0.\)
  
It was shown  by Canfield, Erd\H{o}s, and Pomerance \cite{canerdpom} that 
\begin{equation}\label{densityinu}\Psi(x,y)\sim \frac{x}{u^{u(1+o(1))}},\end{equation}  as \(u\rightarrow \infty,\) whenever \(y\geq (\log x)^{1+\varepsilon}.\) For the smallest \(y\) in the range \eqref{yandh}, we have \(\Psi(x,y)\sim xe^{-(\log x)^{1/3+o(1)}}\) as \(x\rightarrow \infty,\) which  it is worth noting is  much sparser than the primes. Clearly, \(y\)-smooth numbers are more frequent as \(y\) increases, thus for all \(y\) satisfying \eqref{yandh} we have \begin{equation}\label{density} \Psi(x,y)\gg \frac{x}{e^{(\log x)^{0.34}}}.
\end{equation} 

\subsection{Outline of argument} 
By Perron's formula (see e.g.~\cite[Thm.~5.2, Cor.~ 5.3]{montvaug}) twice, we express the count of smooth numbers in an interval in terms of a vertical integral in the complex plane,
\begin{equation}\label{perron}\Psi(x+h,y)-\Psi(x,y)=\frac{1}{2\pi i}\int_{\alpha-iH}^{\alpha+iH}\zeta(s,y)\frac{(x+h)^s-x^s}{s} ~\mathrm{d}s+O\left(\frac{x\log x}{H}\right),\end{equation}
for \(y,h,H\in[2,x].\) By Cauchy's residue theorem, we move the integral onto a particular contour \(\Gamma\) to obtain
\[\frac{1}{2\pi i}\int_\Gamma\zeta(s,y)\frac{(x+h)^s-x^s}{s} ~\mathrm{d}s,\] as well as an integral over two horizontal lines at imaginary height \(\pm H\) connecting \(\Gamma\) to the original line. The contour \(\Gamma\) is chosen carefully to satisfy certain properties: near the real axis it remains on the line \(\Re(s)=\alpha;\) it then follows an exponential path leftwards; it narrowly avoids zeros of the zeta function; and it moves along the line \(\Re(s)=(1+\varepsilon)/2\) when no zeros are nearby.  The integrand is holomorphic for \(\Re(s)>0,\) so we take on no residues in the process. 

The portion of the integral near the real axis is used to formulate a main term. In order to obtain a saving elsewhere at large imaginary height, we show that \(\zeta(s,y)\) does not become too large on \(\Gamma.\) The Euler product structure of \(\zeta(s,y)\) reveals that the crucial estimate we need to show is that \begin{equation}\label{crucialest}\frac{1}{\log x}\sum_{n\leq y}\frac{\Lambda(n)}{n^s}\end{equation} is small for \(s\) on or to the right of \(\Gamma,\) but with \(\Re(s)\leq \alpha.\) We do this by another contour integral depending on the size of the local zero-free region.
 
As one might have guessed, our lower  bound for \(y\) in \eqref{yandh} is ultimately a consequence of the best available zero-free region of the Riemann zeta function \(\zeta(s)\) by Vinogradov and Korobov:
\[\Re(s)>1-\lambda \quad  \textrm{ and } \quad |\Im(s)|\leq x,\] where here and throughout, for a suitably small absolute constant \(c_1>0,\) we write \begin{equation}\label{lambdadef}\lambda= \frac{c_1}{(\log x)^{2/3}(\log \log x)^{1/3} }.\end{equation}
We  choose \(y\) large enough that \(\alpha\) is well within this zero-free region. Indeed, for \(y\leq x\) satisfying  \eqref{yandh} we have 
\begin{equation}\label{alphalambda}1-\alpha=\frac{\log(u\log(u+1))+O(1)}{\log y}\leq \frac{\log(u\log(u+1))+O(1)}{C(\log x)^{2/3}(\log \log x)^{4/3}}\leq \varepsilon\lambda\end{equation} provided we choose \(C\geq 100/(\varepsilon c_1 ),\) for instance.

This method only delivers a significant enough saving if the  contour \(\Gamma\) can be shown to reside sufficiently far left, at least on average. Since we do not know the real parts of every individual zero of the zeta function,  we utilise some knowledge of their statistical placement known by the log-free zero-density estimate of Guth and Maynard \cite{guthmayn}:
\begin{equation}\label{zerodensity}\sum_{\zeta(\rho)=0: ~\rho=\beta+i\gamma,~ \beta\geq \sigma,~ |\gamma|\leq H}1\ll_\varepsilon H^{(1-\sigma)(30+\varepsilon)/13}.\end{equation}
The \(30/13\) in the exponent is ultimately responsible for  our restriction that \(\theta>17/30.\) If instead we were to use the the weaker zero-density result of Huxley \cite{hux} throughout this paper, then we would attain the weaker bound of \(\theta>7/12.\)

Obstacles to this method emerge when \(y\) becomes large, so we circumvent them with several analytic techniques. Among these is the adoption of a smooth majorant on the smooth numbers, leading to a weighted version of \eqref{crucialest}.

 However, for \(y\) very close to \(x,\) our version of the saddle point method appears to break down entirely. We therefore adopt a different approach having more in common with that of Hildebrand  \cite{hild2}.  This transforms the problem from counting smooth numbers in short intervals  into one of counting primes in short intervals, by observing that at this level of smoothness we need only exclude numbers with a few large prime factors. We require an estimate for the count of primes in short intervals with quickly decaying error terms. In order to attain this, we first weight the primes, use the explicit formula, and then use the zero-density estimate of Guth and Maynard. This once again leads to the \(17/30\) bound for \(\theta.\) 
 
\subsection{Saddle point method and related work} A typical method of obtaining short interval results for primes is via the explicit formula. Unfortunately, the smooth numbers appear not to enjoy such a formula. An alternative approach is via the Hooley--Huxley contour,  appearing in work of Ramachandra \cite{ram},  and involves dissecting and shifting the Perron integral to the left  of the \(1\)-line in a way that stops short of hitting zeros of \(\zeta(s).\) One then employs the zero-density result to the savings corresponding to the location of the contour. A delicate choice of contour passing through zero-free regions, paired with zero-density estimates, is a key component of  our proof.

The majority of this paper follows the framework of the saddle point method, which first appeared in the landmark work of Hildebrand and Tenenbaum \cite{hilten}, achieving an asymptotic formula for the count of smooth numbers. Additionally, the method was able to deliver the short interval result quoted earlier, which we discuss here. After expressing the count as in \eqref{perron}, a saving in \(x^s\zeta(s,y)\) over the trivial bound is fundamental for \(s=\alpha+it\) and \(1/\log y\leq |t|\leq H.\)  By exploiting the Euler product structure, a saving of at most \(e^{-u}\) is attained, as in \eqref{quantbounds}, provided
\(H\leq e^{(\log y)^{3/2}}\) (due to the Vinogradov--Korobov zero-free region). This saving can only be sufficient if it is greater than the length of the Perron  integral, meaning \(H\leq e^u.\) Upon equalising these two restrictions, and noting that the error in Perron's formula at least requires that \(H\geq x/h,\) one arrives at the lower bound of applicability \(h\geq x/ e^{(\log x)^{3/5}}.\)

Of particular relevance to our proofs is work of Soundararajan \cite{sound},  later built upon by Harper \cite{harp2}, studying the equidistribution of smooth numbers in an arithmetic progression. A key insight featured in these works is that a saving is instead possible from the \(x^s\) term by shifting left from \(\alpha+it\) into a (rectangular) zero-free region, and by showing that \(\zeta(s,y),\) or rather an analogue involving Dirichlet characters, does not become too large in  the process. The latter task is done by considering  expressions similar in  form to \eqref{crucialest}. This approach is then combined with statistical behaviour of the zeros of a Dirichlet \(L\)-function by zero-density estimates.  Harper \cite{harp3} went on to prove results for smooth numbers in arithmetic progressions on average, analogous to the Bombieri--Vinogradov theorem for example, by further developing the saddle point method with several new ideas, including a delicate dissection of contributions from zeros of families of \(L\)-functions, as well as employing the large sieve (though our paper does not use the latter of these tools). Many of these ideas transfer to the short interval setting.

\subsection{Organisation}The plan for the remainder of the article is as follows.
For small \(\varepsilon>0,\) we let \(\theta\geq 17/30+10\varepsilon,\) and set a suitably large \(C\asymp 1/\varepsilon.\)   The objective of Section \ref{shiftingleft} is to demonstrate a power saving for \(x^s\zeta(s,y)\) when shifting left into a local zero-free region. In Section \ref{contour} we move our Perron integral onto a contour \(\Gamma,\) left of the vertical line at \(\Re(s)=\alpha,\) and prove that sufficient  savings occur away from the real axis when \(y\leq x^\varepsilon\) (where the constant introduced in Theorem \ref{mainfunc} has size \(c\asymp \varepsilon\)).  We employ a smooth weight function in Section \ref{smoothing} to improve this to \(y\leq x^{1-2\varepsilon}\) (now with \(c\asymp \varepsilon^2\)). This gives sufficient error terms when \(y\leq x^{1/2-\varepsilon},\)  or when \(x^\theta\leq h\leq x^{0.99},\) say.  
In Section \ref{largey}, which may essentially be read independently of earlier sections, we use a direct argument counting primes in short intervals to cover the range \(x^{1-2\varepsilon}\leq y\leq 2x\) (such that \(c\asymp \varepsilon\)), as well as when \(x^{1/2-\varepsilon} \leq y\leq x^{1-2\varepsilon}\) and \( x^{0.99}\leq h\leq x.\) In Section \ref{proofthms}, we combine the results on different ranges to complete the proof of Theorem \ref{mainfunc}. The proof of Theorem \ref{main} quickly follows by proving the main term \(f_\theta(x,y)\) approximates \(\Psi(x,y)/x.\)

 Finally, Section \ref{riemannsection} contains the proof of Theorem \ref{mainriemann}, in which we revisit and simplify our method under the assumption of the Riemann Hypothesis (with \(K\asymp 1/\varepsilon^2,\) and in the smallest range of \(y\) with  \(L= 2+ 10\varepsilon\) and \(1-\theta\asymp \varepsilon\) instead).

\subsection*{Notation} We adopt standard asymptotic notation. We write \(f\ll g\) or \(f=O(g)\) to mean \(|f|\leq C g\) for some constant \(C>0.\) Let \(f\asymp g\) signify \(f\ll g \ll f.\) For \(x\rightarrow \infty,\) we also write \(f=o(g)\) if \(f/g\rightarrow 0,\) and \(f\sim g\) if \(f/g\rightarrow 1.\) 

When bounding complex integrals, we use the convenient notation \(\int_r f(z) ~|\mathrm{d}z|\) to mean \(\int_a^bf(r(t))|r'(t)|~\mathrm{d}t\) for a  differentiable path \(r,\) and extend the notation for piecewise differentiable paths linearly as one would expect.
\subsection*{Acknowledgements}
 The author would like to thank Adam Harper for many helpful discussions and encouragement. 
The author is supported by the Warwick Mathematics Institute Centre for Doctoral Training, and gratefully acknowledges funding by the Swinnerton-Dyer scholarship. 
\subsection*{Rights Retention} For the purpose of open access, the author has applied a Creative Commons Attribution (CC-BY) licence to any Author Accepted Manuscript version arising
from this submission.

\section{Shifting left}\label{shiftingleft}
Recall that the defining property of \(\alpha\) is as a saddle point:    for all \(\sigma>0,\) one has
\[x^\sigma\zeta(\sigma,y)\geq x^\alpha\zeta(\alpha,y).\]  In this section we show that this inequality can be reversed, and with a power saving in \(x,\) when \(\sigma\) is replaced by \(\sigma+it\in \mathbb{C}\) in some zero-free region far away from the real axis. To do this, we first establish some preparatory results.  

\begin{lemma}[Bounded logarithmic derivative in zero-free region] Let \(\varepsilon>0\) be small, and let  \(0< \eta\leq1/2.\)  Write \(s_0=\sigma_0+it_0\neq 1,\) where \(\sigma_0\geq 1-(1-\varepsilon)\eta.\) Suppose there are no zeros of \(\zeta(s)\) in the region 
\[\Re(s)>1-\eta \quad \textrm{and} \quad \Im(s)\in (t_0-1,t_0+1).\]
 Then
 \[\left|\frac{-\zeta'}{\zeta}(s_0)\right|\leq \frac{1}{|s_0-1|}+O_\varepsilon \left(\log (|t_0|+2)\right).\]
\end{lemma}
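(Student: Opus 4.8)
The plan is to reduce the bound to the classical partial‑fraction expansion of \(\zeta'/\zeta\) near the line \(\Re(s)=1\), and then to use the hypothesised zero‑free region to control precisely the zeros lying closest to \(s_0\).

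First I would clear away the easy ranges. If \(\sigma_0\geq 2\), then \(|\zeta'/\zeta(s_0)|\leq \sum_{n\geq 1}\Lambda(n)n^{-2}=O(1)\), and since \(1/|s_0-1|\geq 0\) the claimed inequality is trivial. If \(|t_0|\leq 2\), then \(\zeta\) has no zeros with \(|\Im(s)|\leq 14\) and none in \((0,1)\), while \(\sigma_0\geq 1-(1-\varepsilon)\eta\geq \tfrac12\); hence \(\zeta'/\zeta(s)+1/(s-1)\) is holomorphic and \(O(1)\) on a fixed compact set containing \(s_0\), which gives the bound. So from now on I assume \(|t_0|\geq 2\) and \(\tfrac12\leq\sigma_0<2\).

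Next, isolate the pole: put \(f(s)=(s-1)\zeta(s)\), which is entire, so that \(-\zeta'/\zeta(s_0)=1/(s_0-1)-f'/f(s_0)\), and it suffices to prove \(|f'/f(s_0)|\ll_\varepsilon\log(|t_0|+2)\). For this I would apply the standard ``number of zeros'' lemma, which rests on the Borel--Carath\'eodory theorem applied to \(\log f\) on a disc centred to the right of the \(1\)-line (cf.\ \cite{montvaug}): using a standard polynomial bound \(|\zeta(s)|\ll(|t_0|+2)^{O(1)}\) on a fixed disc about \(2+it_0\), together with \(|f(2+it_0)|=|1+it_0|\,|\zeta(2+it_0)|\gg|t_0|\), one obtains
\[\frac{\zeta'}{\zeta}(s_0)=\sum_{\rho:\ |\gamma-t_0|\leq 1}\frac{1}{s_0-\rho}-\frac{1}{s_0-1}+O\!\big(\log(|t_0|+2)\big),\]
the sum over the non‑trivial zeros \(\rho=\beta+i\gamma\) of \(\zeta\), with multiplicity; this holds with no hypothesis on zero‑free regions. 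It therefore remains to show \(\big|\sum_{|\gamma-t_0|\leq 1}(s_0-\rho)^{-1}\big|\ll_\varepsilon\log(|t_0|+2)\).

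Here is where the hypothesis enters, and where I expect the main difficulty to lie. Each zero in this sum has \(\beta\leq 1-\eta\), so \(|s_0-\rho|\geq\sigma_0-\beta\geq\varepsilon\eta\), and also \(|s_0-\rho|\geq|\gamma-t_0|\); the zeros with \(|\gamma-t_0|\) bounded away from \(0\) contribute \(O(\log(|t_0|+2))\) by the classical count \(N(T+1)-N(T)\ll\log(T+2)\). For the zeros very close to \(s_0\) a crude bound would lose a factor \(\eta^{-1}\), so instead I would dissect dyadically in \(|s_0-\rho|\) and, on each scale, invoke the sharpened local count \(\#\{\rho:\ \beta\geq 1-\delta,\ |\gamma-t_0|\leq\delta\}\ll \delta\log(|t_0|+2)+1\) — obtained by keeping only these terms in the non‑negative sum \(\Re\sum_\rho\big((1+\delta)+it_0-\rho\big)^{-1}\) and bounding the latter via \(-\Re(\zeta'/\zeta)(1+\delta+it_0)\) and the expansion above — the point being that zeros whose real part is that close to \(1\) and whose imaginary part is that close to \(t_0\) are genuinely rare, not merely separated from \(s_0\). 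Summing the resulting geometric‑type contributions over the dyadic scales yields \(O_\varepsilon(\log(|t_0|+2))\), and combining this with the two earlier steps gives the lemma.
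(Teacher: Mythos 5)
Your reduction to bounding $\bigl|\sum_{|\gamma-t_0|\leq 1}(s_0-\rho)^{-1}\bigr|$ is correct, and so are the first two steps (removing the pole, partial fractions). The problem is in the final step. You attempt to establish the bound by controlling the stronger quantity $\sum_{|\gamma-t_0|\leq 1}\bigl|s_0-\rho\bigr|^{-1}$ via a dyadic dissection and a local zero count. This cannot succeed, because the stronger statement is, in general, false: there are configurations of zeros, compatible with the classical density $N(T+1)-N(T)\ll\log T$ and with the bounds $|\zeta'/\zeta(\sigma+it_0)|\leq -\zeta'/\zeta(\sigma)$ for $\sigma>1$ and $|\zeta'/\zeta(1+it_0)|\ll\log|t_0|$ (which is all your local count can see), for which $\sum|s_0-\rho|^{-1}$ is of size $\log|t_0|\log\log|t_0|$. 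For instance, take $\eta\asymp(\log|t_0|)^{-2}$ with a suitably large implied constant, and a family $\rho_j^\pm = 1-\eta + i(t_0\pm j\sqrt\eta)$ for $1\leq j\leq M\asymp\log|t_0|$; each constraint you invoke (including $\sum_\rho\Re(1+\delta'+it_0-\rho)^{-1}\ll (\delta')^{-1}+\log|t_0|$ for all $\delta'>0$, from which your local count follows) is satisfied, yet $\sum_j|s_0-\rho_j^\pm|^{-1}\asymp(\log M)/\sqrt\eta\asymp\log|t_0|\log\log|t_0|$, while $\sum_j(s_0-\rho_j^\pm)^{-1}\ll 1$ because the imaginary parts of the terms cancel in pairs. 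So the quantity you need \emph{genuinely} depends on cancellation in the sum, which any absolute-values argument discards.

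Concretely this shows up in the dyadic accounting: at scale $\delta$ the local count gives $N_\delta\ll\delta\log|t_0|+1$, and the crude bound $|s_0-\rho|^{-1}\leq\delta^{-1}$ gives a contribution $\ll\log|t_0|+\delta^{-1}$; summing over dyadic scales from $\varepsilon\eta$ to $1$ produces $\log|t_0|\log(1/\eta)+(\varepsilon\eta)^{-1}$, not $O_\varepsilon(\log|t_0|)$, so the "geometric" sum does not in fact converge to the desired bound. The paper avoids this entirely by comparing $s_0$ with the point $s_1=1+it_0$ on the $1$-line: it writes $\tfrac{-\zeta'}{\zeta}(s_0)=\tfrac{-\zeta'}{\zeta}(s_1)-\sum_\rho\bigl(\tfrac{1}{s_0-\rho}-\tfrac{1}{s_1-\rho}\bigr)+O(\log|t_0|)$, bounds the \emph{difference} termwise by $|s_1-s_0|/(|s_0-\rho||s_1-\rho|)\ll_\varepsilon\Re\tfrac{1}{s_1-\rho}$ (using the zero-free region to give $|s_0-\rho|\asymp_\varepsilon|s_1-\rho|$ and $1-\sigma_0\leq 1-\beta$), sums that via $\Re\sum_\rho\tfrac{1}{s_1-\rho}\ll\log|t_0|$, and for the remaining piece uses the classical bound $\zeta'/\zeta(1+it_0)\ll\log|t_0|$, which already incorporates all the cancellation in $\sum_\rho(s_1-\rho)^{-1}$. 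That differencing step is the missing idea; as written, your argument has a genuine gap at the final bound.
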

\begin{proof} If \(|t_0|< 2\) then, from the Laurent expansion of \(-\zeta'/\zeta,\) we find that
\[\left|\frac{-\zeta'}{\zeta}(s_0)\right|\leq \frac{1}{|s_0-1|}+O(1).\]
Now assume \(|t_0|\geq 2.\) Consider the partial fraction expansion  when \(|t|\geq 2,\) given by 
\[\frac{-\zeta'}{\zeta}(s)=-\sum_\rho \frac{1}{s-\rho}+O(\log |t|),\] where the sum is over zeros \(\rho=\beta+i\gamma\) such that \(\gamma\in(t-1,t+1)\) (see e.g.~ \cite[\S 15]{dav}).
Set \(s_1=1+it_0.\) We have the standard bound on the \(1\)-line 
\begin{equation}\label{logbound}\frac{-\zeta'}{\zeta}(s_1)\ll \log |t_0|,\end{equation} (see e.g.~\cite[Thm 6.7]{montvaug}) and thus from the partial fraction expansion
\begin{equation}\label{logbound2}\Re\sum_\rho\frac{1}{s_1-\rho}\ll  \log |t_0|.\end{equation}

 Now consider the difference \begin{equation}\label{diff}\frac{-\zeta'}{\zeta}(s_0)-\frac{-\zeta'}{\zeta}(s_1)=-\sum_\rho \left(\frac{1}{s_0-\rho}-\frac{1}{s_1-\rho}\right)+O(\log |t_0|).\end{equation} Our hypothesised zero-free region implies \(|s_0-\rho|\asymp_\varepsilon|s_1-\rho|,\) and \(|s_1-s_0|=1-\sigma_0\leq 1-\beta= \Re (s_1-\rho).\) Each summand is   therefore at most
\[\frac{|s_1-s_0|}{|s_0-\rho||s_1-\rho|}\ll_\varepsilon\Re\frac{s_1-\rho}{|s_1-\rho|^2}=\Re \frac{1}{s_1-\rho}.\] From the earlier estimates \eqref{logbound} and \eqref{logbound2}, we see that \eqref{diff} yields
\[\frac{-\zeta'}{\zeta}(s_0)=\frac{-\zeta'}{\zeta}(s_1)+O_\varepsilon\left(\Re \sum_\rho\frac{1}{s_1-\rho}\right)+O(\log|t_0|)\ll_\varepsilon \log |t_0|,\] completing the proof.
\end{proof}
In the following argument in complex analysis, we  improve the \(\log |t_0|\) bound by interpolating between this and the stronger bound  \((\log|t_0|)^{2/3+\varepsilon}\) to the right of the Vinogradov--Korobov zero-free region.
\begin{lemma}[Improved bound]\label{improved}
Let \(\varepsilon>0\) be small, and let  \(0< \eta\leq1/2.\)  Write \(s_0=\sigma_0+it_0\neq 1,\) where \(\sigma_0\geq 1-(1-\varepsilon)\eta.\) Suppose there are no zeros of \(\zeta(s)\) in the region 
\[\Re(s)>1-\eta \quad \textrm{and} \quad \Im(s)\in (t_0-2,t_0+2).\] Then
\[\left|\frac{-\zeta'}{\zeta}(s_0)\right|
 \leq \frac{1}{|s_0-1|}+O_\varepsilon ((\log (|t_0|+2))^b),\] 
 for some constant \(b=b(\varepsilon)<1.\)
\end{lemma}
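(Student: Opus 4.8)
The plan is to interpolate, via the Hadamard three-circles theorem, between the bound $\log(|t_0|+2)$ supplied by the preceding lemma --- valid on a large circle reaching almost to the left edge $\Re(s)=1-\eta$ of the assumed zero-free region --- and the Vinogradov--Korobov bound $\zeta'/\zeta(s)\ll(\log(|t|+2))^{3/4}$, valid on a small circle lying entirely in $\{\Re(s)\geq 1\}$. Two cases are immediate: if $|t_0|$ is bounded then the preceding lemma already gives the conclusion, since $(\log(|t_0|+2))^{b}$ then exceeds a fixed positive constant; and if $\sigma_0\geq 1$ then $s_0$ lies in $\{\Re(s)\geq 1\}$ and the Vinogradov--Korobov bound applies directly (with $1/|s_0-1|\geq 0$). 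So I would assume $|t_0|\geq 3$ and $1-(1-\varepsilon)\eta\leq\sigma_0<1$, and set $g(s)=-\zeta'/\zeta(s)-1/(s-1)$, which extends holomorphically across $s=1$ and is therefore holomorphic on $\{\Re(s)>1-\eta,\ |\Im(s)-t_0|<2\}$, this region containing no zeros of $\zeta$ by hypothesis.

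Next I would calibrate the circles. Take $z_0=(1+\tfrac{1}{2}\eta)+it_0$, $r_1=\tfrac{1}{4}\eta$, $r_3=(\tfrac{3}{2}-\tfrac{\varepsilon}{2})\eta$, and set $r_2=|s_0-z_0|=(1+\tfrac{1}{2}\eta)-\sigma_0$. Using $1-(1-\varepsilon)\eta\leq\sigma_0<1$ and $0<\eta\leq\tfrac{1}{2}$ one verifies that $\tfrac{1}{4}\eta=r_1<r_2\leq(\tfrac{3}{2}-\varepsilon)\eta<r_3$, so in particular $2<r_2/r_1\leq 6-4\varepsilon$ while $r_3/r_1=6-2\varepsilon$; that the closed disc $|s-z_0|\leq r_1$ lies in $\{\Re(s)\geq 1+\tfrac{1}{4}\eta\}$; and that the closed disc $|s-z_0|\leq r_3$ lies in $\{\Re(s)\geq 1-(1-\tfrac{\varepsilon}{2})\eta\}\cap\{|\Im(s)-t_0|\leq\tfrac{3}{4}\}$, hence inside the holomorphy region of $g$, with $s_0$ on the circle $|s-z_0|=r_2$. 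On $|s-z_0|=r_1$ the Vinogradov--Korobov bound gives $|g(s)|\ll(\log(|t_0|+2))^{3/4}$ (the term $1/(s-1)$ being $O(1)$). On $|s-z_0|=r_3$, applying the preceding lemma with $\varepsilon/2$ in place of $\varepsilon$ --- legitimate since the left extreme of that disc has real part exactly $1-(1-\tfrac{\varepsilon}{2})\eta$ and, as $|\Im(s)-t_0|\leq\tfrac{3}{4}$ there, the shifted windows $(\Im(s)-1,\Im(s)+1)$ remain inside $(t_0-2,t_0+2)$ --- gives $|g(s)|\ll_\varepsilon\log(|t_0|+2)$.

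With $M(r)=\max_{|s-z_0|=r}|g(s)|$, the three-circles theorem now yields
\[\log|g(s_0)|\leq\log M(r_2)\leq\frac{\log(r_3/r_2)}{\log(r_3/r_1)}\,\log M(r_1)+\frac{\log(r_2/r_1)}{\log(r_3/r_1)}\,\log M(r_3).\]
The weight $\vartheta:=\log(r_2/r_1)/\log(r_3/r_1)$ attached to the larger bound satisfies $0<\vartheta\leq\log(6-4\varepsilon)/\log(6-2\varepsilon)<1$, a quantity depending only on $\varepsilon$. Inserting the two boundary estimates and exponentiating gives $|g(s_0)|\ll_\varepsilon(\log(|t_0|+2))^{b}$ with $b:=\tfrac{3}{4}(1-\vartheta)+\vartheta=\tfrac{3}{4}+\tfrac{\vartheta}{4}<1$; and since $-\zeta'/\zeta(s_0)=g(s_0)+1/(s_0-1)$, the lemma follows.

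I expect the main obstacle to be precisely the geometric calibration above: the small circle must stay to the right of $\Re(s)=1$ so that Vinogradov--Korobov applies, while the large circle must reach far enough left to be, \emph{in the scale} $\log(\cdot/r_1)$, genuinely closer to $r_2$ than to itself --- and this is exactly where the slack in $\sigma_0\geq 1-(1-\varepsilon)\eta$, rather than merely $\sigma_0>1-\eta$, is indispensable, for otherwise one could have $r_2=r_3$, forcing $\vartheta=1$ and no improvement over $\log(|t_0|+2)$. The supporting facts --- the Vinogradov--Korobov estimate for $\zeta'/\zeta$ on $\{\Re(s)\geq 1\}$, and the uniform applicability of the preceding lemma along the entire large circle --- are routine to check.
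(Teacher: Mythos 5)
Your proof is correct and follows essentially the same strategy as the paper: interpolate via Hadamard's three-circle theorem between the Vinogradov--Korobov bound on a small circle lying to the right of $\Re(s)=1$ and the previous lemma's $O_\varepsilon(\log|t_0|)$ bound on a larger circle, with the radii calibrated so that the interpolation weight stays bounded away from $1$. Your choice of center and radii differs slightly (and you subtract $1/(s-1)$, which is unnecessary caution since $|t_0|\geq 3$ keeps the pole well outside the large disc), but these are cosmetic variations on the paper's argument.
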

\begin{proof}
Assume \(|t_0|\geq 2,\) otherwise the result is easy to deduce as before. Consider circles \(C_1,C_2,C_3\) each having centre \(3/2+it_0\) and respective radii \(r_1=1/2, r_2= 1/2+(1-\varepsilon)\eta,\) and \(r_3= 1/2+(1-\varepsilon/2)\eta.\) Write \(M_j\) for the maximum value that \(|-\zeta'(s)/\zeta(s)|\) attains on \(C_j,\) which is finite by the assumed zero-free region. We have \(M_1\ll (\log |t_0|)^{2/3}(\log \log |t_0|)^{1/3}\) (see e.g.~ \cite[p.\,135]{titc}), and \(M_3\ll_\varepsilon \log |t_0|\) from the previous lemma (with \(\varepsilon\) scaled by \(1/2\)).

 By Hadamard's three circle theorem (as in e.g.~\cite[p.\,337]{titc}), 
\[M_2\leq M_1^{1-a}M_3^a\] where \[a=\frac{\log (r_2/r_1)}{\log (r_3/r_1)}=\frac{\log(1+2(1-\varepsilon)\eta)}{\log(1+2(1-\varepsilon/2)\eta)}.\] Observe that \(a\) is an increasing function of \(\eta> 0,\)
so is maximised at \(\eta=1/2.\) Thus \(a\leq \log(2-\varepsilon)/\log(2-\varepsilon/2)<1,\) and the result follows. 
\end{proof}

Consider a trivial upper bound from the triangle inequality  \[\left|\sum_{n\leq y}\frac{\Lambda(n)}{n^{\alpha+it}}\right|\leq\sum_{n\leq y}\frac{\Lambda(n)}{n^{\alpha}}= \sum_{p\leq y}\frac{\log p}{p^\alpha-1}+O(1)= \log x+O(1).\]
The trivial bound is even worse when replacing \(\alpha+it\) with  \(\sigma+it\) such that \(\sigma\leq \alpha.\) In order to obtain a saving, we must make use of the oscillations from \(n^{-it}.\) To do this we go via Perron's formula, provided we have a suitably large zero-free region around \(\sigma+it.\)
\begin{lemma}[Shifting into local zero-free region yields cancellation]\label{shifting} Let \(z\geq 2.\)  Let \(\varepsilon>0\) be small, and let  \(0< \eta\leq1/2.\)  Write \(s_0=\sigma_0+it_0 \neq 1,\) where \(1-(1-\varepsilon)\eta\leq \sigma_0<1+1/\log z.\) Suppose there are no zeros of \(\zeta(s)\) in the region 
\[\Re(s)>1-\eta \quad \textrm{and} \quad \Im(s)\in (t_0-10z^\eta,t_0+10z^\eta).\] Then for some  \(b=b(\varepsilon)<1,\) we have
\begin{align*}
\left|\sum_{n \leq z} \frac{\Lambda(n)}{n^{s_0}}\right|\leq \frac{z^{1-\sigma_0}+1}{|1-s_0|}&+O_\varepsilon\left((\log (|t_0|+2))^b\right)+O\left(\frac{(\log z)^2}{z^{\varepsilon\eta}}\right) \\
&+O_\varepsilon\left(\frac{((\log(|t_0|+z^\eta+2))^b+1/\eta)\log(z^\eta/\eta)}{z^{\varepsilon\eta/2}}\right).
\end{align*}  
\end{lemma}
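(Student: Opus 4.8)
The plan is to estimate the sum $\sum_{n\le z}\Lambda(n)n^{-s_0}$ by a Perron-type contour integral and then move the contour left to exploit the hypothesised zero-free region, picking up only the pole at $s=1-s_0$ of the integrand. Concretely, I would start from the truncated Perron formula
\[
\sum_{n\le z}\frac{\Lambda(n)}{n^{s_0}}=\frac{1}{2\pi i}\int_{b_0-iT}^{b_0+iT}\left(-\frac{\zeta'}{\zeta}\right)(s+s_0)\,\frac{z^s}{s}\,\mathrm{d}s+O\!\left(\frac{z^{1-\sigma_0}(\log z)^2}{T}\right),
\]
taking the abscissa $b_0=1-\sigma_0+1/\log z$ so that $\Re(s+s_0)=1+1/\log z$ is just to the right of the $1$-line (where $-\zeta'/\zeta$ is $O(\log z)$), and the truncation height $T\asymp z^{\varepsilon\eta}$, which produces the $O((\log z)^2/z^{\varepsilon\eta})$ error term. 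I would also want $T$ not too large — say $T\le z^\eta$ — so that the shifted rectangle stays inside the strip $\Im(s)\in(t_0-10z^\eta,t_0+10z^\eta)$ where the zero-free region is available (note $10z^\eta$ comfortably exceeds $z^\eta+1$).

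Next I would shift the contour to the vertical line $\Re(s)=-(1-\varepsilon)\eta-(\sigma_0-1)$, i.e.\ to $\Re(s+s_0)=1-(1-\varepsilon)\eta$, which lies strictly to the right of $1-\eta$ and hence inside the zero-free region for the relevant range of imaginary parts. The only singularity crossed is the simple pole of $z^s/s$ at $s=0$, giving residue $(-\zeta'/\zeta)(s_0)$; wait — more carefully, the integrand $(-\zeta'/\zeta)(s+s_0)\,z^s/s$ has a pole at $s=0$ with residue $(-\zeta'/\zeta)(s_0)$ \emph{and} a pole at $s+s_0=1$, i.e.\ $s=1-s_0$, coming from the pole of $-\zeta'/\zeta$, with residue $z^{1-s_0}/(1-s_0)$. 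Collecting these, $\sum_{n\le z}\Lambda(n)n^{-s_0}$ equals $(-\zeta'/\zeta)(s_0)+z^{1-s_0}/(1-s_0)$ plus the shifted vertical integral plus two horizontal integrals at height $\pm T$ plus the Perron error. For the main term $(-\zeta'/\zeta)(s_0)$ I invoke Lemma \ref{improved} (whose hypothesis, a zero-free region in $\Im(s)\in(t_0-2,t_0+2)$, is implied by ours since $10z^\eta\ge 2$), obtaining $|(-\zeta'/\zeta)(s_0)|\le 1/|s_0-1|+O_\varepsilon((\log(|t_0|+2))^b)$; together with $|z^{1-s_0}/(1-s_0)|=z^{1-\sigma_0}/|1-s_0|$ this yields the first two displayed terms $\tfrac{z^{1-\sigma_0}+1}{|1-s_0|}+O_\varepsilon((\log(|t_0|+2))^b)$ after bounding $1/|s_0-1|$ against $1/|1-s_0|$.

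It remains to bound the shifted vertical integral and the horizontal connectors. On the shifted line $\Re(s+s_0)=1-(1-\varepsilon)\eta$ the integrand's size is controlled by $|(-\zeta'/\zeta)(s+s_0)|$, and here I would again apply the bound from the first lemma (the $O_\varepsilon(\log|t|)$ bound in a zero-free region) — or rather a version of it valid throughout the box — to get $|(-\zeta'/\zeta)(s+s_0)|\ll_\varepsilon \log(|t_0|+z^\eta+2)+1/\eta$, the $1/\eta$ accounting for proximity to the pole at $1$ when $t_0$ is small; multiplying by $|z^s/s|\le z^{-(1-\varepsilon)\eta-(\sigma_0-1)}/|s|$ and integrating $\mathrm{d}s$ over the segment of length $\asymp z^\eta$ (so $\int \mathrm{d}|s|/|s|\ll\log(z^\eta/\eta)$, with the lower cutoff $\asymp(1-\varepsilon)\eta$ near $s=0$), and using $z^{1-\sigma_0}\le z^{\varepsilon\eta/2}\cdot z^{1-\sigma_0-\varepsilon\eta/2}$... more simply, $z^{-(1-\varepsilon)\eta}z^{1-\sigma_0}\le z^{1-\sigma_0}z^{-\varepsilon\eta/2}\cdot z^{-(1-3\varepsilon/2)\eta}\le z^{1-\sigma_0-\varepsilon\eta/2}$, which after absorbing $z^{1-\sigma_0}\asymp1$ (valid since $\sigma_0$ is within $O(1/\log z)$ of $1$, or else handled directly) gives precisely the last displayed error term $O_\varepsilon((\log(|t_0|+z^\eta+2))^b+1/\eta)\log(z^\eta/\eta)\,z^{-\varepsilon\eta/2}$ — and I would upgrade the $\log$ to $\log^b$ exactly as in Lemma \ref{improved}, via Hadamard three circles, if the straightforward $\log$ bound is not already enough (here it is, since the stated error has a $\log^b$, weaker than $\log$). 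The horizontal integrals at height $T\asymp z^{\varepsilon\eta}$ contribute $\ll (\log z)^2 z^{1-\sigma_0}/T\ll (\log z)^2 z^{-\varepsilon\eta}$, already subsumed.

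The main obstacle I anticipate is the careful bookkeeping of the pole at $s=1-s_0$: it lies at real part $1-\sigma_0$ (which is $\le(1-\varepsilon)\eta$, hence to the \emph{right} of the shifted line, so it genuinely contributes a residue) but it is also close to the original contour, and one must check the residue $z^{1-s_0}/(1-s_0)$ is extracted cleanly and that $|1-s_0|$ does not create a spurious blow-up beyond the $\tfrac{z^{1-\sigma_0}+1}{|1-s_0|}$ already allowed — in particular, when $|t_0|$ is small the terms $1/\eta$ and $1/|1-s_0|$ interact and one must ensure consistency with the hypothesis $s_0\neq1$. A secondary technical point is choosing $T$ in the window $[z^{\varepsilon\eta},z^\eta]$ so that \emph{both} the Perron truncation error and the containment of the shifted rectangle inside the given zero-free strip hold simultaneously; taking $T$ at the lower end $z^{\varepsilon\eta}$ is cleanest and is what the stated error terms suggest.
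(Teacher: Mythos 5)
The overall strategy (Perron at abscissa $1-\sigma_0+1/\log z$, change variables, shift left, invoke Lemma \ref{improved} for the logarithmic derivative) is exactly the paper's, but two of your parameter choices are wrong and would not produce the stated error terms. The most serious is the truncation height $T\asymp z^{\varepsilon\eta}$. The Perron remainder is $O(z^{1-\sigma_0}(\log z)^2/T)$, and the hypothesis allows $1-\sigma_0$ as large as $(1-\varepsilon)\eta$, so with your choice of $T$ this remainder can be as large as $z^{(1-\varepsilon)\eta}(\log z)^2/z^{\varepsilon\eta}=z^{(1-2\varepsilon)\eta}(\log z)^2$, which grows with $z$ and ruins the estimate. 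To obtain the claimed $O((\log z)^2/z^{\varepsilon\eta})$ one needs $T\geq z^{1-\sigma_0+\varepsilon\eta}$, i.e.\ $T$ at the \emph{upper} end of your window, $T\asymp z^{\eta}$ (the paper takes $T\in\{2z^\eta,4z^\eta\}$). Your closing remark that "taking $T$ at the lower end $z^{\varepsilon\eta}$ is cleanest" is exactly backwards, and it already contradicts your own intermediate claim that the shifted vertical segment has length $\asymp z^\eta$: with $T\asymp z^{\varepsilon\eta}$ that segment has length $\asymp z^{\varepsilon\eta}$, which would give $\log(z^{\varepsilon\eta}/\eta)$, not the stated $\log(z^\eta/\eta)$.

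The second issue is the shift abscissa. You shift to $\Re(s+s_0)=1-(1-\varepsilon)\eta$, which the hypothesis permits to equal $\sigma_0$ (when $\sigma_0=1-(1-\varepsilon)\eta$), so the shifted line can pass through the pole of $z^s/s$ at $s=0$, and, more importantly, the factor $z^{r-\sigma_0}$ meant to supply the $z^{-\varepsilon\eta/2}$ saving on the vertical integral can be as large as $z^0=1$, giving no saving at all; your chain $z^{-(1-\varepsilon)\eta}z^{1-\sigma_0}\le z^{1-\sigma_0-\varepsilon\eta/2}$ requires $1-\sigma_0\leq(1-3\varepsilon/2)\eta$, which is strictly stronger than the hypothesis. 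The fix, as in the paper, is to shift only to $\Re(s+s_0)=1-(1-\varepsilon/2)\eta$: this is at least $(\varepsilon/2)\eta$ to the left of every admissible $\sigma_0$, keeps $s=0$ strictly inside the rectangle, and makes $z^{r-\sigma_0}\leq z^{-\varepsilon\eta/2}$ hold uniformly. Two smaller points: since $b<1$, a plain $O(\log)$ bound on the shifted line is \emph{weaker} than the target $O(\log^b)$, so you do need the three-circles improvement of Lemma \ref{improved} on that line as well (your zero-free strip $|\Im(s)-t_0|<10z^\eta$ accommodates this); and the residue at $s=1-s_0$ is only picked up when $|t_0|\leq T$, a case distinction the paper makes explicitly---though since omitting a nonnegative term only strengthens the bound, your final inequality survives even without it.
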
 

\begin{proof}
If \(2\leq T\leq 10z,\) then by Perron's formula (see e.g.~\cite[Thm.~5.2, Cor.~5.3]{montvaug}) \[\sum_{n\leq z} \frac{\Lambda(n)}{n^{s_0}}=\frac{1}{2\pi i}\int_{1-\sigma_0+1/\log z-iT}^{1-\sigma_0+1/\log z+iT}\frac{-\zeta'}{\zeta}(s+s_0)\frac{z^s}{s}~\textrm{d}s+O\left(\frac{z^{1-\sigma_0}(\log z)^2}{T}\right).\]
A change of variable for the integral, and also insisting that \(T\geq  z^{\eta},\) leads to \[\frac{1}{2\pi i}\int_{1+1/\log z+it_0-iT}^{1+1/\log z+it_0+iT}\frac{-\zeta'}{\zeta}(s)\frac{z^{s-s_0}}{s-s_0}~\textrm{d}s+O\left(\frac{(\log z)^2}{z^{\varepsilon\eta}}\right).\]
We shift the contour left to \(\Re(s)=r=1-(1-\varepsilon/2)\eta.\) There are two cases to consider. 
  
{\it Case 1.} Assume \(|t_0|\geq 3z^\eta.\) Set \(T=2z^\eta.\) We do not encounter the simple pole of \(-\zeta'(s)/\zeta(s)\) at \(s=1.\) By Cauchy's residue theorem, the integral is equal to \begin{equation}\label{cauchy}
\frac{-\zeta'}{\zeta}(s_0)+\frac{1}{2\pi i}\left(\int_{r+it_0-iT}^{r+it_0+iT}+\int_{1+1/\log z+it_0-iT}^{r+it_0-iT}+\int_{r+it_0+iT}^{1+1/\log z+it_0+iT}\right)\frac{-\zeta'}{\zeta}(s)\frac{z^{s-s_0}}{s-s_0}~\textrm{d}s.\end{equation}
  We have \(|-\zeta'(s_0)/\zeta(s_0)|\leq 1/|s_0-1|+O_\varepsilon((\log(t_0+2))^b)\) by Lemma \ref{improved}.

{\it Case 2.} Assume \(|t_0|<3z^\eta.\) Set \(T=4z^{\eta}.\) We apply Cauchy's residue theorem exactly as before,  but we do encounter the simple pole of \(-\zeta'(s)/\zeta(s)\) at \(s=1,\) which gives \eqref{cauchy} plus an additional term
\[\frac{z^{1-s_0}}{1-s_0}.\] 

In either case, we now handle \eqref{cauchy}. 
  By  Lemma \ref{improved}, the upper horizontal integral is crudely bounded by 
\[\ll \max_{\sigma\in[r,1+1/\log z]}\left|\frac{-\zeta'}{\zeta}(\sigma+it_0+ iT)\right|\frac{z^{1-\sigma_0}}{T}\ll_\varepsilon \frac{(\log(|t_0|+z^\eta+2))^b}{z^{\varepsilon\eta}}.\]  The lower horizontal integral is bounded similarly.  The vertical integral is 
\[\ll z^{r-\sigma_0} \max_{t\in[-T,T]}\left|\frac{-\zeta'}{\zeta}(r+it_0+ it)\right|\int_{-T}^T\frac{1}{|r-\sigma_0+it|} ~\mathrm{d}t.\]
We have \(|r-\sigma_0+it|\geq \max\{\varepsilon\eta/2,|t|\},\) since we can consider the real and imaginary parts separately.  Again using Lemma \ref{improved}, the above expression is then
  \begin{align*}
&\ll_\varepsilon\frac{(\log(|t_0|+T+2))^b+ 1/\eta }{z^{\varepsilon\eta/2}}\left(\int_{\eta}^{T} \frac{1}{t}~\textrm{d}t+\int_{-\eta}^{\eta} \frac{1}{\eta}~\textrm{d}t\right)\\
&\ll\frac{(\log(|t_0|+z^\eta+2))^b+ 1/\eta }{z^{\varepsilon\eta/2} }\left(\log (z^\eta)+\log (1/\eta)+1\right)
\end{align*}
as desired.
\end{proof}
In the next lemma, we achieve the objective that we stated at the start of this section: we exhibit a power saving in \(x^s\zeta(s,y)\) as \(s\) travels far enough left from \(s_2=\alpha+it_0\) into a local zero free-region, provided \(t_0\) is large enough. 
Recall from \eqref{lambdadef} that the  width of the zero-free region of Vinogradov and Korobov up to imaginary height \(x\)  is given by \[\lambda= \frac{c_1}{(\log x)^{2/3}(\log \log x)^{1/3} }\] for a suitably small absolute constant \(c_1>0.\)
\begin{lemma}[Power saving from shifting left locally]\label{powersave} Let \(\varepsilon>0\) be small, and let  \( \lambda\leq \eta\leq 1/2.\) 
Write \(s_1=\sigma_1+it_0,\) and \(s_2=\sigma_2+it_0,\) where    \(1-(1-\varepsilon)\eta\leq \sigma_1<\sigma_2<1+1/\log y\)  and \[\frac{\log(u+1)}{ \varepsilon^2(\log y) y^{1-\alpha}}y^{1-\sigma_1}\leq |t_0|\leq x.\]  Suppose there are no zeros of \(\zeta(s)\) in the region \[\Re(s)>1-\eta \quad \textrm{and} \quad \Im(s)\in(t_0-10y^{\eta},t_0+10y^{\eta}).\] 
Then for \(y\leq x\) in the range \eqref{yandh},  and \(x\)  sufficiently large in terms of \(\varepsilon,\) we have
\[|x^{s_1}\zeta(s_1,y)|\ll \frac{|x^{s_2}\zeta(s_2,y)|}{x^{(1-\varepsilon)(\sigma_2-\sigma_1)}}.\]
\end{lemma}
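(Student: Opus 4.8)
The plan is to pass to logarithms, turning the claimed multiplicative inequality into an additive one, and then to control the increment of $\log|\zeta(\cdot,y)|$ between $\sigma_1$ and $\sigma_2$ by means of Lemma~\ref{shifting}. Since $\zeta(s,y)$ is a finite Euler product it is holomorphic and non-vanishing for $\Re(s)>0$, so $F(\sigma):=\log|\zeta(\sigma+it_0,y)|$ is a smooth real function of $\sigma>0$ with
\[F'(\sigma)=\Re\frac{\zeta'}{\zeta}(\sigma+it_0,y)=-\Re\sum_{p\le y}\frac{\log p}{p^{\sigma+it_0}-1}.\]
Because $|x^{s_j}\zeta(s_j,y)|=\exp\!\bigl(\sigma_j\log x+F(\sigma_j)\bigr)$, it suffices, after taking logarithms, to establish $F(\sigma_1)-F(\sigma_2)\le\varepsilon(\sigma_2-\sigma_1)\log x+O(1)$. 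By the fundamental theorem of calculus $F(\sigma_1)-F(\sigma_2)=\int_{\sigma_1}^{\sigma_2}\bigl(-F'(\sigma)\bigr)\,\mathrm{d}\sigma\le\int_{\sigma_1}^{\sigma_2}\bigl|\sum_{p\le y}\tfrac{\log p}{p^{\sigma+it_0}-1}\bigr|\,\mathrm{d}\sigma$, and since $\sigma_2-\sigma_1<1$ it is enough to prove the pointwise estimate
\[\Bigl|\sum_{p\le y}\frac{\log p}{p^{\sigma+it_0}-1}\Bigr|\ \le\ \varepsilon\log x+O_\varepsilon(1)\qquad(\sigma_1\le\sigma\le\sigma_2).\]

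For this I would first strip off the prime powers: since $\sigma\ge\sigma_1\ge1-(1-\varepsilon)\eta\ge\tfrac12+\tfrac{\varepsilon}{2}$ (as $\eta\le\tfrac12$), expanding $\tfrac1{p^s-1}=\sum_{k\ge1}p^{-ks}$ and bounding the $k\ge2$ terms trivially gives $\sum_{p\le y}\tfrac{\log p}{p^s-1}=\sum_{n\le y}\tfrac{\Lambda(n)}{n^s}+O_\varepsilon(1)$. Then apply Lemma~\ref{shifting} with $z=y$ and $s_0=\sigma+it_0$, whose hypotheses — $1-(1-\varepsilon)\eta\le\sigma_0<1+1/\log y$, and no zeros of $\zeta$ in the box of half-width $10y^\eta$ about $t_0$ inside $\Re(s)>1-\eta$ — are precisely those assumed in the present lemma. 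This bounds $|\sum_{n\le y}\Lambda(n)n^{-s_0}|$ by the four explicit terms of Lemma~\ref{shifting}, and the task reduces to showing each of them is $\ll\varepsilon^2\log x$ once $x$ is large in terms of $\varepsilon$; summing, taking $\varepsilon$ small, and exponentiating the logarithmic inequality then completes the proof.

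The step I expect to be the crux is the first term $\tfrac{y^{1-\sigma_0}+1}{|1-s_0|}$. Here I would use $|1-s_0|\ge|t_0|$ and $y^{1-\sigma_0}\le y^{1-\sigma_1}$, then feed in the hypothesised lower bound for $|t_0|$ together with the saddle-point formula \eqref{alpha}, valid since $\log x<y\le x$, which yields $y^{1-\alpha}=u\log(u+1)\,e^{O(1)}$. Thus the hypothesis becomes $|t_0|\gg\varepsilon^{-2}(u\log y)^{-1}y^{1-\sigma_1}=\varepsilon^{-2}(\log x)^{-1}y^{1-\sigma_1}$, whence $\tfrac{y^{1-\sigma_1}+1}{|t_0|}\ll\varepsilon^2\log x$ (the $+1$ being harmless since $y^{1-\sigma_1}\ge y^{-1/\log y}=e^{-1}$). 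The second term $O_\varepsilon\bigl((\log(|t_0|+2))^b\bigr)$ with $b<1$ is $o(\log x)$ because $|t_0|\le x$. The two remaining Perron-type terms are where the constant $C$ of \eqref{yandh} is genuinely needed: from $\eta\ge\lambda$ and $\log y\ge C(\log x)^{2/3}(\log\log x)^{4/3}$ one gets $\eta\log y\ge\lambda\log y\ge c_1C\log\log x$, so $y^{\varepsilon\eta/2}\ge(\log x)^{\varepsilon c_1C/2}$; choosing $C$ large in terms of $\varepsilon$ — as is already forced by \eqref{alphalambda} — makes this a larger power of $\log x$ than the polylogarithmic numerators (which are $\ll_\varepsilon(\log x)^{O(1)}$, using $|t_0|\le x$, $y^\eta\le\sqrt{2x}$ and $1/\eta\le1/\lambda$), so both terms are $o(1)$. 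Apart from this last point and the matching of the lower bound for $|t_0|$ with the size of the saddle point $\alpha$, the remaining estimates are routine.
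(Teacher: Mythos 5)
Your proof is correct and follows essentially the same route as the paper: both pass to logarithms, control $\log|\zeta(s_1,y)|-\log|\zeta(s_2,y)|$ by bounding the logarithmic derivative on the segment $[\sigma_1,\sigma_2]\times\{t_0\}$, reduce to $\sum_{n\le y}\Lambda(n)n^{-s_0}$ after stripping prime powers, and then apply Lemma~\ref{shifting} with the same verification that each error term is $\ll\varepsilon\log x$ via the hypothesis on $|t_0|$, the saddle-point formula for $\alpha$, and the size of $y^{\varepsilon\eta/2}$ forced by the constant $C$ in \eqref{yandh}.
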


\begin{proof} 
Recall the Euler product \(\zeta(s,y)=\prod_{p\leq y}\left(1-{p^{-s}}\right)^{-1}\) for \(\Re(s)>0,\) so in particular \(0<|\zeta(s,y)|<\infty.\)  Thus  \(\log \left|\zeta(s_1,y)/\zeta(s_2,y)\right|\) is finite and equals   \[\Re\log \frac{\zeta(s_1,y)}{\zeta(s_2,y)}=\Re\int_{s_2}^{s_1}\frac{\zeta'}{\zeta}(s,y)~\mathrm{d}s\leq(\sigma_2-\sigma_1)\sup_{\substack{\Re(s)\geq \sigma_1 \\ \Im(s)=t_0}}\left|\frac{\zeta'}{\zeta}(s,y)\right|.\]
 Using the product rule for differentiation, we have 
\begin{equation}\label{logdiff}
\frac{-\zeta'}{\zeta}(s,y)=\sum_{p\leq y}\frac{\log p}{p^{s}-1}=\sum_{n \leq y} \frac{\Lambda(n)}{n^{s}}+O\left(\sum_{\substack{p\leq y\\ k\geq 2}}\frac{\log p}{\left|p^{ks}\right|}\right).\end{equation}
We may bound the error term, if \(\Re(s)\geq (1+\varepsilon)/2,\) by summing a geometric progression 
\[\sum_{\substack{p\leq y\\ k\geq 2}}\frac{\log p}{p^{ks}}\ll \sum_{p\leq y}\frac{\log p}{|p^{2s}|}\ll_\varepsilon 1.\]
 
For \(x\) large enough in terms of \(\varepsilon,\) we claim that for \(s\) in the above supremum we have  \[\left|\sum_{n \leq y} \frac{\Lambda(n)}{n^{s}}\right|\leq \frac{\varepsilon\log x}{2},\] which suffices to prove the lemma. 
First  observe that \[y^{\varepsilon\eta}\geq y^{\varepsilon\lambda}= \exp\left(\frac{\varepsilon c_1\log y}{(\log x)^{2/3}(\log \log x)^{1/3}}\right)\geq (\log x)^{C\varepsilon c_1}\] and choose \(C\geq 100/(\varepsilon c_1).\) We then see from Lemma \ref{shifting} that
\[\left|\sum_{n\leq y}\frac{\Lambda(n)}{n^s}\right|\leq \frac{y^{1-\sigma}+1}{|1-s|}+O_\varepsilon((\log(|t_0|+2))^b)+O_\varepsilon(1).\]
Certainly  we can bound the term \[O_\varepsilon((\log(|t_0|+2))^b)\leq \frac{\varepsilon\log x}{ 10}\] for \(x\) sufficiently large in terms of \(\varepsilon.\)  Next, using the approximate formula \eqref{alpha} for \(\alpha,\)  \[\left|\frac{y^{1-\sigma}}{1-s}\right|\leq \frac{y^{1-\sigma_1}}{|t_0|}\leq \frac{\varepsilon^2 (\log y) y^{1-\alpha}}{\log(u+1)} \leq  \frac{\varepsilon^2 (\log y) e^{O(1)}u\log (u+1)}{\log(u+1)}\leq \frac{\varepsilon\log x}{10},\] as \(\varepsilon>0\) is suitably small. 
Similarly, 
\[\left|\frac{1}{1-s}\right|\leq \frac{ey^{1-\sigma_1}}{|t_0|}\leq  \frac{\varepsilon\log x}{10}.\]
 Assembling these bounds proves the claim, and hence the lemma. 
\end{proof}
\section{Contour}\label{contour}
Let \(\varepsilon>0\) be small. For each zero \(\rho=1-\nu+i\gamma\) of \(\zeta(s)\) with \(\nu\leq 1/2,\) consider the (connected) path \(\Gamma_\rho=\Gamma_\rho'\cup\Gamma_\rho^+\cup\Gamma_\rho^-,\)  given by the union of a vertical line \[\Gamma_\rho'(t)=1-(1-\varepsilon)\nu+i(\gamma+10t), \quad \quad t\in[-y^{\nu},y^{\nu}] ,\] and the exponential graphs \[\Gamma_\rho^\pm(\delta)=1-(1-\varepsilon)\delta+i(\gamma\pm 10y^{\delta}), \quad \quad \delta\in\left[\nu, \frac{1}{2}\right] .\] 
Also, let \(\Gamma_0=\Gamma_0'\cup\Gamma_0^+\cup \Gamma_0^-\)  be the union of  the vertical line
\[\Gamma_0'(t)=\alpha+ it,\quad \quad t\in \left[-\frac{\log(u+1)}{\varepsilon^2 (\log y)},\frac{\log(u+1)}{\varepsilon^2 (\log y)}\right]\]
 and the exponential graphs \[\Gamma_0^\pm(\delta)=1-\delta\pm i\frac{\log(u+1)}{\varepsilon^2 (\log y)y^{1-\alpha}}y^\delta,\quad \quad \delta\in \left[1-\alpha,\frac{1-\varepsilon}{2}\right].\] 
For every \(t\) satisfying \( |t|\leq H,\) for some \(H\)  we specify later, select the largest \(\sigma\) such that \(\sigma+it \in \Gamma_0 \cup(\bigcup_\rho \Gamma_\rho),\) and if none exists then set \(\sigma=(1+\varepsilon)/2\). This set of \(s=\sigma+it\) forms a continuous path regarded as travelling up the complex plane, which we denote \(\Gamma\) (and \(\Gamma(t)\) denotes this point \(\sigma+it\)). 

Recall by \eqref{alphaupper} that \(\alpha<1\) for \(x\) sufficiently large. 
 The way we have constructed our path \(\Gamma\) ensures, by Lemma \ref{powersave}, that if \(y\leq x\) is in the range \eqref{yandh}, and if \(x\) is  sufficiently large in terms of \(\varepsilon,\) then every \(s=\sigma+it\) on or to the right of  \(\Gamma,\) but to the left of \(\alpha,\) satisfies \[|x^{s}\zeta(s,y)|\ll \frac{|x^{\alpha+it}\zeta(\alpha+it,y)|}{x^{(1-\varepsilon)(\alpha-\sigma)}}\leq \frac{x^\alpha\zeta(\alpha,y)}{x^{(1-\varepsilon)(\alpha-\sigma)}}.\]  The second inequality is from a simple application of the triangle inequality. 

Recall from Perron's formula \eqref{perron} that for \(2\leq y\leq x\) and \(2\leq H\leq x\) we have \[\Psi(x+h,y)-\Psi(x,y)=\frac{1}{2\pi i}\int_{\alpha-iH}^{\alpha+iH}\zeta(s,y)\frac{(x+h)^s-x^s}{s}~\mathrm{d}s+O\left(\frac{x\log x}{H}\right).\]
By Cauchy's theorem, we alter the path of integration to obtain
\[\frac{1}{2\pi i}\left(\int_{\Gamma\cap\Gamma_0}+\int_{\bigcup_\rho\left(\Gamma\cap\Gamma_\rho\right)}+\int_{\Gamma\setminus \left(\Gamma_0 \cup\left(\bigcup_\rho \Gamma_\rho\right)\right)}+\int_{\substack{\sigma \pm iH\\\Gamma(\pm H)\leq \sigma\leq \alpha}}\right)\zeta(s,y)\frac{(x+h)^s-x^s}{s} ~\mathrm{d}s\] and encounter no poles in the process.  
 Now observe the expansion
\[\left(1+\frac{h}{x}\right)^s=1+s\int_0^{h/x} (1+v)^{s-1} ~\mathrm{d}v=1+s\int_0^{h/x} \left(1+(s-1)\int_0^{v} (1+w)^{s-2} ~\mathrm{d}w \right)~\mathrm{d}v.\] As \(\Re(s)\leq 10,\) we can bound the integrands  \(|(1+v)^{s-1}|\ll 1\) and \(|(1+w)^{s-2}|\ll 1,\)   so
\begin{equation}\label{taylor0} \frac{(x+h)^s-x^s}{s}=x^s\frac{(1+h/x)^s-1}{s}\ll|x^s|\frac{h}{x},\end{equation} and
 \[\frac{(x+h)^s-x^s}{s}=x^s\frac{(1+h/x)^s-1}{s}=x^s\left(\frac{h}{x}+O\left(|s-1|\frac{h^2}{x^2}\right)\right),\] provided \(s\neq 0.\) 

 Applying this, we extract a main term, with an error, from near the real axis
 \[\frac{1}{2\pi i}\frac{h}{x}\int_{\Gamma\cap\Gamma_0}\zeta(s,y)x^s ~\mathrm{d}s+O\left(\int_{\Gamma\cap\Gamma_0}\min\left\{\frac{h^2}{x^2}|s-1|,\frac{h}{x}\right\}|\zeta(s,y)x^s| ~|\mathrm{d}s|\right)\] and we bound the contribution from the zeros and the `remainder' line by
 \[\ll\frac{h}{x}\sum_\rho\int_{\Gamma\cap \Gamma_\rho}|\zeta(s,y)x^s|~|\mathrm{d}s|+\frac{h}{x}\int_{\Gamma\setminus \left(\Gamma_0 \cup\left(\bigcup_\rho \Gamma_\rho\right)\right)}|\zeta(s,y)x^s|~|\mathrm{d}s|.\]
 We also bound the integral over the horizontal path by
 \[\ll \frac{1}{H}\int_{\substack{\sigma \pm iH\\\Gamma(\pm H)\leq \sigma\leq \alpha}}|\zeta(s,y)x^s |~|\mathrm{d}s|,\]
  instead using the triangle inequality  \[\frac{(x+h)^s-x^s}{s}\ll \frac{|x^s|}{H}.\]
 \begin{lemma}[Contribution from zeros]\label{contzeros}  Let \(\varepsilon>0\) be  small, and let \(H\leq x^{13/30-5\varepsilon}.\) Suppose \(y\leq x^\varepsilon\) is in the range \eqref{yandh},  and \(x\)  is sufficiently large in terms of \(\varepsilon.\) Then
 \[\sum_\rho \int_{\Gamma\cap\Gamma_\rho}|\zeta(s,y)x^s|~|\mathrm{d}s|\ll_\varepsilon\Psi(x,y)e^{-c((\log x)/\log \log x)^{1/3}}\]
for a small constant \(c=c(\varepsilon)>0.\) 
 \end{lemma}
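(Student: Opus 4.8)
The plan is to bound the integral over each piece $\Gamma\cap\Gamma_\rho$ by means of the power-saving estimate of Lemma \ref{powersave} (in the form recorded just before this lemma), to sum the resulting bounds over the zeros $\rho$ using the log-free zero-density estimate \eqref{zerodensity}, and finally to convert the leading factor $x^\alpha\zeta(\alpha,y)$ into $\Psi(x,y)$ via \eqref{hildtenboundzeta}. The first step is to make the integrand purely geometric. Every $s\in\Gamma\cap\Gamma_\rho$ lies on $\Gamma$ with $\Re s<\alpha$, so $|\zeta(s,y)x^s|\ll x^\alpha\zeta(\alpha,y)\,x^{-(1-\varepsilon)(\alpha-\Re s)}$. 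On $\Gamma_\rho$ one has $\Re s=1-(1-\varepsilon)\delta$, with $\delta=\nu$ on the vertical arc $\Gamma_\rho'$ and $\delta\in[\nu,1/2]$ on the arcs $\Gamma_\rho^{\pm}$. The Vinogradov--Korobov region \eqref{lambdadef} forbids zeros of height at most $x$ with real part exceeding $1-\lambda$, hence $\nu\ge\lambda$ and so $\delta\ge\lambda$; feeding in $1-\alpha\le\varepsilon\lambda\le\varepsilon\delta$ from \eqref{alphalambda} gives $\alpha-\Re s=\alpha-1+(1-\varepsilon)\delta\ge(1-2\varepsilon)\delta$, so that
\[|\zeta(s,y)x^s|\ \ll\ x^\alpha\zeta(\alpha,y)\,x^{-(1-3\varepsilon)\delta}\qquad(s\in\Gamma\cap\Gamma_\rho).\]
Since the right-hand side depends only on $\Re s$, I may enlarge the contour of integration from $\Gamma\cap\Gamma_\rho$ to all of $\Gamma_\rho$.

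Next I would integrate over the three arcs of $\Gamma_\rho$. On $\Gamma_\rho'$ the arclength is $\ll y^\nu$ with $\delta=\nu$ constant, and on $\Gamma_\rho^{\pm}$ one has $|\mathrm{d}s|\ll y^\delta(\log y)\,\mathrm{d}\delta$. Because $y\le x^\varepsilon$ we get $y^\nu\le x^{\varepsilon\nu}$ and $yx^{-(1-3\varepsilon)}\le x^{-(1-4\varepsilon)}<1$, so the arising integrals $\int_\nu^{1/2}\!\bigl(yx^{-(1-3\varepsilon)}\bigr)^\delta\mathrm{d}\delta$ are geometric and dominated by $\delta=\nu$; all three arcs contribute $\ll x^\alpha\zeta(\alpha,y)\,x^{-(1-4\varepsilon)\nu}$, so that
\[\int_{\Gamma\cap\Gamma_\rho}|\zeta(s,y)x^s|\,|\mathrm{d}s|\ \ll\ x^\alpha\zeta(\alpha,y)\,x^{-(1-4\varepsilon)\nu}\qquad(\rho=1-\nu+i\gamma,\ \nu\le\tfrac12).\]
Only zeros with $\nu\le1/2$ carry a contour $\Gamma_\rho$, and $\Gamma_\rho$ can meet the part $|\Im s|\le H$ of $\Gamma$ only when $|\gamma|\le H+10y^{1/2}\le x^{13/30-4\varepsilon}$ for $x$ large; write $\widetilde H$ for this bound on $|\gamma|$. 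By \eqref{zerodensity} the number of these zeros with $\nu\le w$ is $\ll_\varepsilon\widetilde H^{w(30+\varepsilon)/13}$, so summing by parts over $\nu\in[\lambda,1/2]$, and using that the hypothesis $H\le x^{13/30-5\varepsilon}$ forces $\widetilde H^{(30+\varepsilon)/13}x^{-(1-4\varepsilon)}\le x^{-c'\varepsilon}<1$ for an absolute constant $c'>0$, the $\nu$-integral is dominated by its lower endpoint, whence
\[\sum_\rho x^{-(1-4\varepsilon)\nu}\ \ll_\varepsilon\ x^{-c'\varepsilon\lambda}\ =\ \exp\!\bigl(-c'\varepsilon c_1((\log x)/\log\log x)^{1/3}\bigr)\]
by \eqref{lambdadef}.

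Finally, \eqref{hildtenboundzeta} gives $x^\alpha\zeta(\alpha,y)\ll\Psi(x,y)\sqrt{\log x\log y}\le\Psi(x,y)\log x$, and the surviving factor $(\log x)^{O(1)}$ is absorbed into the exponential since $\log\log x=o\bigl(((\log x)/\log\log x)^{1/3}\bigr)$; combining the last three displays yields the assertion with any $c<c'\varepsilon c_1=c(\varepsilon)$. I expect the main obstacle to be the exponent bookkeeping: one must simultaneously keep Lemma \ref{powersave} applicable with a clean saving $x^{-(1-O(\varepsilon))\delta}$ along $\Gamma_\rho$ (this is where $1-\alpha\le\varepsilon\lambda$ and $\nu\ge\lambda$ enter), keep this saving strong enough after it is eroded by the restriction $y\le x^\varepsilon$, and keep the product of the surviving exponent with the zero-density exponent $(30+\varepsilon)/13$ strictly negative (this is precisely what $H\le x^{13/30-5\varepsilon}$ buys), so that the sum over zeros is geometrically dominated by the smallest possible value $\nu=\lambda$ and produces the Vinogradov--Korobov-shaped saving. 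A lesser technical point is the justification of replacing $\Gamma\cap\Gamma_\rho$ by $\Gamma_\rho$ in the integral, which relies on the pointwise bound being a function of $\Re s$ alone, and on a careful accounting of which ordinates $\gamma$ are in play near imaginary height $H$.
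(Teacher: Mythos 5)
Your proposal is correct and follows essentially the same route as the paper: you bound the integrand on $\Gamma\cap\Gamma_\rho$ by the power-saving estimate recorded just before this lemma, integrate over the vertical and exponential arcs to obtain a contribution $\ll x^\alpha\zeta(\alpha,y)\,x^{-(1-O(\varepsilon))\nu}$ per zero, sum over the relevant ordinates via the Guth--Maynard zero-density estimate \eqref{zerodensity}, and convert $x^\alpha\zeta(\alpha,y)$ to $\Psi(x,y)$ via \eqref{hildtenboundzeta}, with the final saving $x^{-\varepsilon\lambda}$ driven by the Vinogradov--Korobov region \eqref{lambdadef}. The only cosmetic differences from the paper's write-up are that you aggregate the $\varepsilon$-powers into a single clean exponent $x^{-(1-4\varepsilon)\nu}$ before summing, and you phrase the summation over zeros as a partial-summation integral over $\nu\in[\lambda,1/2]$ rather than the paper's dyadic-style decomposition over $\nu\in[\ell/\log x,(\ell+1)/\log x]$; both are equivalent and yield the same bound.
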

 \begin{proof} First consider the contribution from one vertical line
\[
\int_{\Gamma\cap\Gamma_\rho'}|\zeta(s,y)x^s|~|\mathrm{d}s|\ll|\Gamma_\rho'| \frac{\zeta(\alpha,y)x^{\alpha}}{x^{(1-\varepsilon)(\alpha-(1-(1-\varepsilon)\nu))}}\ll \zeta(\alpha,y)x^{\alpha}x^{(1-\varepsilon)(1-\alpha)}\left(\frac{y}{x^{(1-\varepsilon)^2}}\right)^{\nu}.
\]
Similarly, the exponential graph contributes 
\begin{align*}
\int_{\Gamma\cap\Gamma_\rho^+}|\zeta(s,y)x^s|~|\mathrm{d}s|&\ll  \zeta(\alpha,y)x^\alpha  x^{(1-\varepsilon)(1-\alpha)}\int_{\nu}^{1/2} x^{-(1-\varepsilon)^2\delta}\left|\frac{\mathrm{d}}{\mathrm{d}\delta}\Gamma_\rho^+(\delta)\right|~\mathrm{d}\delta\\&\ll \zeta(\alpha,y)x^\alpha x^{(1-\varepsilon)(1-\alpha)}\int_{\nu}^{1/2} x^{-(1-\varepsilon)^2\delta}(\log y )y^{\delta}~\mathrm{d}\delta \\&\ll  \zeta(\alpha,y) x^\alpha  x^{(1-\varepsilon)(1-\alpha)}\frac{\log y}{\log x}\left(\frac{y}{x^{(1-\varepsilon)^2}}\right)^{\nu}.
 \end{align*}
  An identical calculation yields the same bound for \(\Gamma\cap\Gamma_\rho^-.\) 
  
  Recall from \eqref{alphalambda} that \(1-\alpha\leq\varepsilon\lambda\leq \varepsilon\nu,\) and that \(y\leq x^{\varepsilon}.\) Given that \(x^{\alpha}\zeta(\alpha,y)\ll \Psi(x,y)\sqrt{\log x \log y}\) (see \eqref{hildtenboundzeta}), we have
\[\sum_{\rho}\int_{\Gamma\cap \Gamma_\rho} |\zeta(s,y)x^s| ~|\mathrm{d}s|\ll \Psi(x,y)\sqrt{\log x \log y}\sum_\rho x^{-(1-4\varepsilon)\nu}.\]
Since \(|\Gamma_\rho|\ll y^{1/2}\leq x^{\varepsilon/2},\)  the only zeros contributing to \(\Gamma\) are those with imaginary height \(|\gamma|\ll H+x^{\varepsilon/2}.\) 
Consider the contribution to the sum of the \(\rho=1-\nu+i\gamma\) with \[\frac{\ell}{\log x}\leq \nu \leq \frac{\ell+1}{\log x}.\]  
The number of such zeros is \(\ll_\varepsilon (H+x^{\varepsilon/2})^{(30+\varepsilon)(\ell+1)/(13\log x)}\ll x^{(1-5\varepsilon)\ell/\log x}\) by the zero-density estimate \eqref{zerodensity} and the fact that \(H^{(30+\varepsilon)/13}\leq x^{(13/30-5\varepsilon)(30+\varepsilon)/13} \leq  x^{1-5\varepsilon}.\) The contribution from  \(\rho\) in the stated range  is therefore 
\[\ll_\varepsilon x^{(1-5\varepsilon)\ell/\log x} x^{-(1-4\varepsilon)\ell/\log x} = x^{-\varepsilon\ell/\log x}.\]
The number of possible integers \(\ell\)  to cover the whole range of summation is at most \(\log x.\) Also, we may assume that \((\ell+1)/\log x\geq \lambda,\)   for otherwise there are no such \(\rho\) to consider by the Vinogradov--Korobov zero-free region. Therefore \begin{equation}\label{sumzeros}\sum_\rho x^{-(1-4\varepsilon)\nu}\ll (\log x)x^{-\varepsilon\lambda},\end{equation} which is sufficient to prove the result.
 \end{proof}

   \begin{lemma}[Contribution from \(\Gamma_0^{\pm}\)] \label{gamma0} Let \(\varepsilon>0\) be small. Suppose \(y\leq x\) is in the range \eqref{yandh}, and \(x\) is sufficiently large in terms of \(\varepsilon.\) If \(y\leq x^{1/2-\varepsilon}\) then
 \[ \int_{\Gamma\cap\Gamma_0^{\pm}}|s-1||\zeta(s,y)x^s|~|\mathrm{d}s|\ll_\varepsilon\frac{\Psi(x,y)e^{-c_2u}}{\log x},\]  and if \(y\leq x^{1-2\varepsilon}\) then \[ \int_{\Gamma\cap\Gamma_0^{\pm}}|\zeta(s,y)x^s|~|\mathrm{d}s|\ll_\varepsilon \Psi(x,y)e^{-c_2u},\] for a small absolute constant \(c_2>0.\)
 \end{lemma}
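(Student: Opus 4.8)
\textbf{Proof proposal for Lemma \ref{gamma0}.}

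The plan is to parametrise the contour pieces $\Gamma\cap\Gamma_0^{\pm}$ explicitly and bound the integrand using the power-saving estimate recorded just before Lemma \ref{contzeros} (which is a consequence of Lemma \ref{powersave}), namely that every $s=\sigma+it$ on or to the right of $\Gamma$ but left of $\alpha$ satisfies $|x^s\zeta(s,y)|\ll x^\alpha\zeta(\alpha,y)\,x^{-(1-\varepsilon)(\alpha-\sigma)}$. On $\Gamma_0^{\pm}$ we have $\sigma=1-\delta$ for $\delta\in[1-\alpha,(1-\varepsilon)/2]$, so $\alpha-\sigma=\delta-(1-\alpha)$, and the parametrisation gives $|\mathrm{d}s|\asymp (\log y)\,\frac{\log(u+1)}{\varepsilon^2(\log y)y^{1-\alpha}}y^\delta\,\mathrm{d}\delta$, i.e.\ $|\mathrm{d}s|\asymp_\varepsilon \frac{\log(u+1)}{y^{1-\alpha}}y^\delta\,\mathrm{d}\delta$ (the part of $\Gamma_0^{\pm}$ that survives in $\Gamma$, where no zero cuts in front of it). Also $|s-1|=|{-\delta}+i(\cdots)|\ll y^\delta\log(u+1)/(\varepsilon^2(\log y)y^{1-\alpha})$ on this arc. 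So the first integral is
\[
\int_{\Gamma\cap\Gamma_0^{\pm}}|s-1|\,|\zeta(s,y)x^s|\,|\mathrm{d}s|
\ll_\varepsilon x^\alpha\zeta(\alpha,y)\,\frac{(\log(u+1))^2}{(\log y)y^{2(1-\alpha)}}\int_{1-\alpha}^{(1-\varepsilon)/2} x^{-(1-\varepsilon)(\delta-(1-\alpha))}\,y^{2\delta}\,\mathrm{d}\delta,
\]
and the second is the same without the extra $|s-1|$ factor, hence with one power of $y^\delta$ removed and one factor $\log(u+1)/((\log y)y^{1-\alpha})$ removed.

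Next I would evaluate these $\delta$-integrals. The integrand is $\exp\bigl((2\log y-(1-\varepsilon)\log x)\delta+\text{const}\bigr)$ in the first case and $\exp\bigl((\log y-(1-\varepsilon)\log x)\delta+\text{const}\bigr)$ in the second. Under the stated hypotheses $y\le x^{1/2-\varepsilon}$ (resp.\ $y\le x^{1-2\varepsilon}$) the coefficient of $\delta$ is $\le -\varepsilon'\log x$ for some $\varepsilon'\asymp\varepsilon$, so the integral is dominated by its value at the left endpoint $\delta=1-\alpha$: the $\delta$-integral is $\ll_\varepsilon \frac{1}{\log x}\,y^{2(1-\alpha)}$ (resp.\ $\ll_\varepsilon\frac{1}{\log x}y^{1-\alpha}$). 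Substituting back, the $y^{2(1-\alpha)}$ (resp.\ $y^{1-\alpha}$) cancels, leaving
\[
\int_{\Gamma\cap\Gamma_0^{\pm}}|s-1|\,|\zeta(s,y)x^s|\,|\mathrm{d}s|\ll_\varepsilon \frac{x^\alpha\zeta(\alpha,y)}{\log x}\cdot\frac{(\log(u+1))^2}{(\log x)(\log y)}
\]
in the first case, and $\ll_\varepsilon \frac{x^\alpha\zeta(\alpha,y)}{\log x}\cdot\frac{\log(u+1)}{\log x}$ in the second. Using $x^\alpha\zeta(\alpha,y)\ll\Psi(x,y)\sqrt{\log x\log y}$ from \eqref{hildtenboundzeta}, both are $\ll_\varepsilon \Psi(x,y)/\log x$ (resp.\ $\ll_\varepsilon\Psi(x,y)$) times a factor that is $\le x^{-c_2 u}$ once we observe $\log(u+1)\ll u\log y\ll \log x$ and absorb the polynomial-in-$\log x$ factors — here I would be slightly more careful and note that the clean gain actually comes not from the endpoint estimate alone but from retaining a genuine $x^{-c_3(\alpha-\sigma)}$ saving across the bulk of the arc; the claimed shape $e^{-c_2 u}$ should be extracted by splitting the $\delta$-range at, say, $\delta=2(1-\alpha)$ and noting that beyond this point $\alpha-\sigma=\delta-(1-\alpha)\ge 1-\alpha\asymp \frac{\log(u\log(u+1))}{\log y}$, which forces $x^{-(1-\varepsilon)(\alpha-\sigma)}\le (u\log(u+1))^{-c u}\le e^{-c_2 u}$, while the short initial segment $\delta\in[1-\alpha,2(1-\alpha)]$ has length $\ll(1-\alpha)$ and contributes a comparably small amount since there $x^s$ is only of size $x^{1-\delta}\le x^\alpha$ and the measure is tiny.

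The main obstacle I anticipate is precisely this last point: squeezing the bound $e^{-c_2 u}$ (rather than merely $1/\operatorname{poly}(\log x)$) out of the integral requires carefully tracking the saddle-point location $1-\alpha\asymp \log(u\log(u+1))/\log y$ via \eqref{alpha}, so that the factor $x^{-(1-\varepsilon)(\alpha-\sigma)}$ is compared against $y^{(1-\alpha)}$-type quantities with the right constants; getting the two $y^{(1-\alpha)}$ powers (one from $|\mathrm{d}s|$, one from $|s-1|$, and in the second statement only one) to cancel cleanly against the contour normalisation, while still leaving an honest geometric decay of size $e^{-c_2 u}$, is the delicate bookkeeping step. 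The dichotomy between $y\le x^{1/2-\varepsilon}$ and $y\le x^{1-2\varepsilon}$ is exactly the threshold at which the coefficient $2\log y-(1-\varepsilon)\log x$ versus $\log y-(1-\varepsilon)\log x$ changes sign, which is why the weaker conclusion (without the $|s-1|$ gain and without the $1/\log x$) is all one can hope for in the wider range.
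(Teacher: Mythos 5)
The approach is close to the paper's in its parametrisation of $\Gamma_0^{\pm}$ and in the treatment of the $\delta$-integral, but there is a genuine gap: you never invoke the Hildebrand--Tenenbaum estimate \eqref{quantbounds}, and without it the $e^{-c_2u}$ factor cannot be produced. You bound $|\zeta(s,y)x^s|$ by $x^\alpha\zeta(\alpha,y)\,x^{-(1-\varepsilon)(\alpha-\sigma)}$, which uses the triangle inequality to pass from $|\zeta(\alpha+it,y)|$ to $\zeta(\alpha,y)$. The paper instead retains $|\zeta(\alpha+it,y)|$ after applying Lemma \ref{powersave}, and only then uses \eqref{quantbounds}, which gives $|\zeta(\alpha+it,y)|\ll\zeta(\alpha,y)e^{-c_0u}$ uniformly on $\Gamma_0^{\pm}$ because $|t|\geq\log(u+1)/(\varepsilon^2\log y)\geq 1/\log y$ everywhere on that arc. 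That is where the exponential-in-$u$ saving comes from.

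Your proposed substitute — extracting $e^{-c_2u}$ from the power saving $x^{-(1-\varepsilon)(\alpha-\sigma)}$ by splitting at $\delta=2(1-\alpha)$ — does not close the gap. On the initial segment $\delta\in[1-\alpha,2(1-\alpha)]$ one has $\alpha-\sigma=\delta-(1-\alpha)\in[0,1-\alpha]$, so near the left endpoint the factor $x^{-(1-\varepsilon)(\alpha-\sigma)}$ is $\asymp 1$ and gives no saving at all. Carrying out the computation there (with $|\mathrm{d}s|\asymp|t|(\log y)\,\mathrm{d}\delta$ and $|s-1|\ll|t|$, exactly as in your parametrisation) the contribution of this piece to the first integral is $\gg\Psi(x,y)(\log(u+1))^2/(\sqrt{u}\log y)$, which is only polynomially small in $u$ — far larger than the target $\Psi(x,y)e^{-c_2u}/\log x=\Psi(x,y)e^{-c_2u}/(u\log y)$. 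Your heuristic that this piece is negligible because its length is $\ll 1-\alpha$ and $|x^s|\leq x^\alpha$ overlooks that the arclength density $|\mathrm{d}s|/\mathrm{d}\delta\asymp|t|\log y$ is not $O(1)$ and that $x^\alpha\zeta(\alpha,y)$ already essentially saturates $\Psi(x,y)$ up to $\sqrt{\log x\log y}$; nothing in that estimate produces exponential decay in $u$. To fix the argument you must feed in the $L^\infty$ decay of $\zeta(\alpha+it,y)/\zeta(\alpha,y)$ off the real axis, i.e.\ \eqref{quantbounds}, as the paper does before ever writing $\zeta(\alpha,y)$.

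Minor point: in your displayed bound the substitution of $\int_{1-\alpha}^{(1-\varepsilon)/2}x^{-(1-\varepsilon)(\delta-(1-\alpha))}y^{2\delta}\,\mathrm{d}\delta\ll y^{2(1-\alpha)}/\log x$ into the prefactor $\frac{(\log(u+1))^2}{(\log y)y^{2(1-\alpha)}}x^\alpha\zeta(\alpha,y)$ yields only a single factor of $1/\log x$, not the two factors you wrote; but this is inconsequential compared to the missing $e^{-c_0u}$.
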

 \begin{proof}
For \(s=1-\delta+it\in \Gamma_0^+,\) we claim that \[\delta\leq |t|=\frac{\log(u+1)}{\varepsilon^2 (\log y)y^{1-\alpha}}y^\delta.\] The inequality certainly holds when \(\delta=1-\alpha\) (as \(\varepsilon>0\) is suitably small) from the formula \eqref{alpha}. Furthermore, on \(\Gamma_0^+\) we have  \(\mathrm{d}|t|/\mathrm{d}\delta=\log(u+1)y^{\delta}/(\varepsilon^2 y^{1-\alpha})\geq \log(u+1)/\varepsilon^2\geq 1,\) proving the claim. Thus \(|s-1|\leq2|t|.\)  This reasoning also shows that \(\left|\mathrm{d}\Gamma_0^+/{\mathrm{d}\delta}\right|\leq 1+ \mathrm{d}|t|/{\mathrm{d}\delta}\leq 2(\mathrm{d}|t|/{\mathrm{d}\delta}).\) 
 Therefore
\begin{align*}
\int_{\Gamma\cap\Gamma_0^+}|s-1||\zeta(s,y)x^s|~|\mathrm{d}s|&\ll\int_{1-\delta+it\in\Gamma\cap\Gamma_0^+}|t||\zeta(s,y)x^s|~|\mathrm{d}s|\\ 
& \ll\int_{1-\delta+it\in \Gamma\cap\Gamma_0^+}|t|\left|\frac{\zeta(\alpha+it,y)x^\alpha}{x^{(1-\varepsilon)(\alpha-(1-\delta))}}\right|\left|\frac{\mathrm{d}}{\mathrm{d}\delta}\Gamma_0^+(\delta)\right|~\mathrm{d}\delta.\end{align*}
Using the estimates \eqref{hildtenboundzeta} and \eqref{quantbounds}, the above expression is
\begin{align*}
&\ll \int_{1-\delta+it\in\Gamma\cap\Gamma_0^+}e^{-c_0u}|t|\left|\frac{\zeta(\alpha,y)x^\alpha}{x^{(1-\varepsilon)(\alpha-(1-\delta))}}\right|\left|\frac{\mathrm{d}}{\mathrm{d}\delta}\Gamma_0^+(\delta)\right|~\mathrm{d}\delta 
\\& \ll_\varepsilon\frac{x^{(1-\varepsilon)(1-\alpha)}\zeta(\alpha,y)x^\alpha e^{-c_0u}}{\log y}\int_{1-\delta+it\in\Gamma\cap\Gamma_0^+}\frac{y^{2\delta}}{x^{(1-\varepsilon)\delta}}\left(\frac{\log(u+1)}{y^{1-\alpha}}\right)^2 ~\mathrm{d}\delta
\\&=\frac{x^{(1-\varepsilon)(1-\alpha)}\zeta(\alpha,y)x^\alpha  e^{-c_0u}}{\log y}\left(\frac{\log(u+1)}{y^{1-\alpha}}\right)^2\int_{1-\alpha}^{(1-\varepsilon)/2}\left(\frac{y^2}{x^{1-\varepsilon}}\right)^\delta~\mathrm{d}\delta\\&\ll\Psi(x,y)e^{-c_2u}\int_{1-\alpha}^{(1-\varepsilon)/2}\left(\frac{y^2}{x^{1-\varepsilon}}\right)^{\delta-(1-\alpha)}~\mathrm{d}\delta.
\end{align*}
If \(y^2\leq x^{1-2\varepsilon},\) we evaluate the integral to obtain an overall bound of
\[ \ll_\varepsilon\frac{\Psi(x,y)e^{-c_2u}}{\log x}.\]

 An identical calculation yields the same bound for \(\Gamma\cap\Gamma_0^-.\) 
The second claimed bound is proved very similarly by omitting a multiplicative factor.
 \end{proof}
When \(y\) is relatively large we need a more delicate estimate. The next result will only be used when \(h\) is small compared to \(x,\) say \(h\leq x^{0.99}\) (see Section \ref{proofthms}), in which case \((h/x)^{\varepsilon/2}\) delivers a good saving. 
 \begin{lemma}[Contribution from \(\Gamma_0^{\pm},\) part II]\label{gamma02} Let \(\varepsilon>0\) be small. Suppose \(h\) and \(y\leq x\) are in the range \eqref{yandh}, and \(x\) is sufficiently large in terms of \(\varepsilon.\) If \(x^{1/2-\varepsilon}<y\leq x^{1-2\varepsilon}\) then
 \[\int_{\Gamma\cap\Gamma_0^{\pm}}\min\left\{\frac{h^2}{x^2}|s-1|,\frac{h}{x}\right\}|\zeta(s,y)x^s|~|\mathrm{d}s|\ll_\varepsilon \left(\frac{h}{x}\right)^{1+\varepsilon/2}\Psi(x,y)(\log y)e^{-c_3u} \]
  for a small absolute constant \(c_3>0.\)
 \end{lemma}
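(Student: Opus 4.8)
The plan is to run the calculation of Lemma \ref{gamma0} again, but to first trade away most of the factor $|s-1|$ for a small additional power of $h/x$, so that the resulting integral over the contour parameter still converges in this wider range of $y$. For any $t\in[0,1]$ one has the interpolation inequality $\min\{a,b\}\leq a^{t}b^{1-t}$; applying it with $a=\frac{h^2}{x^2}|s-1|$, $b=\frac{h}{x}$ and $t=\varepsilon/2$ gives
\[\min\left\{\frac{h^2}{x^2}|s-1|,\frac{h}{x}\right\}\leq \left(\frac{h}{x}\right)^{1+\varepsilon/2}|s-1|^{\varepsilon/2}.\]
Pulling the constant factor $(h/x)^{1+\varepsilon/2}$ out of the integral, it therefore suffices to prove
\[\int_{\Gamma\cap\Gamma_0^{+}}|s-1|^{\varepsilon/2}\,|\zeta(s,y)x^s|~|\mathrm{d}s|\ll_\varepsilon \Psi(x,y)(\log y)e^{-c_3u},\]
together with the identical bound with $\Gamma_0^-$ in place of $\Gamma_0^+$.

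To estimate this last integral I would repeat the steps of Lemma \ref{gamma0} verbatim, carrying the harmless extra factor $|s-1|^{\varepsilon/2}$. Parametrising $\Gamma\cap\Gamma_0^+$ by $\delta\in[1-\alpha,(1-\varepsilon)/2]$, the bounds $|s-1|\leq 2|t|$ and $\bigl|\tfrac{\mathrm d}{\mathrm d\delta}\Gamma_0^+(\delta)\bigr|\leq 2\,\tfrac{\mathrm d|t|}{\mathrm d\delta}$ established there still apply, where $|t|=\frac{\log(u+1)}{\varepsilon^2(\log y)y^{1-\alpha}}y^\delta$. The defining property of the contour (Lemma \ref{powersave}) lets us replace $|\zeta(s,y)x^s|$ by $x^\alpha|\zeta(\alpha+it,y)|\,x^{-(1-\varepsilon)(\alpha-(1-\delta))}$; then \eqref{quantbounds} (whose range-of-validity hypotheses are verified along $\Gamma_0^+$ exactly as in Lemma \ref{gamma0}) supplies a factor $e^{-c_0u}$, while \eqref{hildtenboundzeta} converts $\zeta(\alpha,y)x^\alpha$ into $\Psi(x,y)\sqrt{\log x\log y}$. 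The extra $|s-1|^{\varepsilon/2}\ll|t|^{\varepsilon/2}$ contributes $y^{\varepsilon\delta/2}$ together with a quantity that is $\ll_\varepsilon1$ (using $y^{1-\alpha}\geq1$, $\log(u+1)\ll1$ and \eqref{alpha}, noting $u\ll 1$ since $y>x^{1/2-\varepsilon}$). Collecting the $\delta$-independent factors in front, we are left to bound
\[\int_{1-\alpha}^{(1-\varepsilon)/2}\left(\frac{y^{1+\varepsilon/2}}{x^{1-\varepsilon}}\right)^{\delta}~\mathrm d\delta.\]

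The one place the hypothesis $y\leq x^{1-2\varepsilon}$ enters is here: for $\varepsilon$ small, $y^{1+\varepsilon/2}\leq x^{(1-2\varepsilon)(1+\varepsilon/2)}=x^{1-3\varepsilon/2-\varepsilon^2}<x^{1-\varepsilon}$, so the base of this geometric integral is less than $1$ and the integral is $\ll\bigl(y^{1+\varepsilon/2}/x^{1-\varepsilon}\bigr)^{1-\alpha}$, dominated by the endpoint $\delta=1-\alpha$. That endpoint factor cancels the $x^{(1-\varepsilon)(1-\alpha)}$ sitting in the prefactor, leaving $y^{(1+\varepsilon/2)(1-\alpha)}=(u\log(u+1))^{1+\varepsilon/2}e^{O(1)}\ll_\varepsilon1$ (again since $u\ll1$ in this range). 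Using finally $\sqrt{\log x\log y}\ll\log y$, valid because $\log x\asymp\log y$ throughout $x^{1/2-\varepsilon}<y\leq x^{1-2\varepsilon}$, we obtain $\ll_\varepsilon\Psi(x,y)(\log y)e^{-c_0u}$, which gives the claim with $c_3$ a suitable small constant; the computation for $\Gamma\cap\Gamma_0^-$ is identical.

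The main obstacle is exactly this exponent arithmetic. For $y>x^{1/2-\varepsilon}$ one has $y^2>x^{1-2\varepsilon}$, so the unmodified estimate of Lemma \ref{gamma0} breaks down: the factor $(y^2/x^{1-\varepsilon})^\delta$ is then \emph{increasing} in $\delta$ and the integral would be governed by the far-left endpoint $\delta=(1-\varepsilon)/2$, producing a hopelessly large bound. The interpolation replaces the exponent $2$ on $y$ by $1+\varepsilon/2$, and the point to check is simply that $1+\varepsilon/2<\frac{1-\varepsilon}{1-2\varepsilon}$ for small $\varepsilon$, which restores the decay of the $\delta$-integral across the whole range $y\leq x^{1-2\varepsilon}$; everything else is a routine rerun of Lemma \ref{gamma0}.
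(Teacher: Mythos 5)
Your proof is correct, and it takes a genuinely different route from the paper's. The paper handles the $\min$ by case analysis: it first treats three sub-ranges of $y$ separately ($x^{1/2-\varepsilon}<y\leq x^{1/2-\varepsilon/2}/2$, a transitional band around $x^{1/2-\varepsilon/2}$, and $2x^{1/2-\varepsilon/2}<y\leq x^{1-2\varepsilon}$), and in the last of these it introduces $\delta_*=\max\{\delta:|t|\leq x/h\}$ to split the $\delta$-integral at the crossover point where the two branches of the minimum meet, bounding $\min\leq \tfrac{h^2}{x^2}|s-1|$ for $\delta\leq\delta_*$ and $\min\leq h/x$ for $\delta>\delta_*$, and then doing some exponent arithmetic with $u(1-\varepsilon)$ to close. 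Your interpolation $\min\{a,b\}\leq a^{\varepsilon/2}b^{1-\varepsilon/2}$ is a smoothed version of exactly that split: instead of cutting at the crossover you take a fixed weighted geometric mean, which dispenses entirely with the sub-ranges in $y$ and with $\delta_*$, and feeds directly into the $\Gamma_0^{+}$ estimate of Lemma \ref{gamma0} with the only change being $y^{2\delta}$ replaced by $y^{(1+\varepsilon/2)\delta}$. The arithmetic you flag as the crux, namely $(1-2\varepsilon)(1+\varepsilon/2)<1-\varepsilon$, is correct and is what guarantees the geometric integral is dominated by its left endpoint $\delta=1-\alpha$ across the whole range $y\leq x^{1-2\varepsilon}$. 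Your bookkeeping of the $\delta$-independent prefactors is slightly loose (the quantity $\bigl(\tfrac{\log(u+1)}{\varepsilon^2(\log y)y^{1-\alpha}}\bigr)^{1+\varepsilon/2}$ coming from $|t|^{\varepsilon/2}$ and $|\mathrm d\Gamma_0^{+}/\mathrm d\delta|$ already carries a factor $y^{-(1+\varepsilon/2)(1-\alpha)}$ that cancels the endpoint contribution $y^{(1+\varepsilon/2)(1-\alpha)}$, so both parts are $\ll_\varepsilon1$ by the same token), but the conclusion and constants come out the same, indeed a little better than the stated $\Psi(x,y)(\log y)e^{-c_3u}$ since $\sqrt{\log x\log y}(\log y)^{-\varepsilon/2}\ll\log y$ in this range. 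What your approach buys is a cleaner, case-free proof at the cost of the marginally sharper power $(h/x)^{u(1-\varepsilon)}$ the paper's split yields, which is not needed for the stated lemma.
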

 \begin{proof}
First suppose that \(x^{1/2-\varepsilon}<y\leq x^{1/2-\varepsilon/2}/2.\) 
 Using \[\min\left\{\frac{h^2}{x^2}|s-1|,\frac{h}{x}\right\}\leq \frac{h^2}{x^2}|s-1|,\] the calculation from the previous lemma gives a bound of \[\ll\frac{h^2}{x^2}\Psi(x,y)e^{-c_2u}\int_{1-\alpha}^{(1-\varepsilon)/2}\left(\frac{y^2}{x^{1-\varepsilon}}\right)^{\delta-(1-\alpha)}~\mathrm{d}\delta\ll\frac{h^2}{x^2}\frac{\Psi(x,y)e^{-c_2u}}{\log(x^{1-\varepsilon}/y^2)}\ll\frac{h^2}{x^2}\Psi(x,y)e^{-c_2u}.\]
Similarly, we achieve the same bound when   \( x^{1/2-\varepsilon/2}/2<y\leq 2x^{1/2-\varepsilon/2}\) by estimating this last integral using the triangle inequality. 

It remains to consider \(2x^{1/2-\varepsilon/2}<y\leq x^{1-2\varepsilon}.\) As in the previous lemma, \(|s-1|\ll |t|\) on \(\Gamma\cap \Gamma_0^+,\) so it suffices to bound 
\[\int_{1-\delta+it\in \Gamma\cap\Gamma_0^+}\min\left\{\frac{h^2}{x^2}|t|,\frac{h}{x}\right\}|\zeta(s,y)x^s|~|\mathrm{d}s|.\]
Let \(\delta_*=\max\{\delta: 1-\delta+it\in \Gamma\cap\Gamma_0^+ \textrm{ and } |t|\leq x/h\}.\)
 The portion of the integral with \(\delta\leq \delta_*\) is  
\begin{equation}\label{deltastar}
\ll_\varepsilon \frac{h^2}{x^2}\Psi(x,y)e^{-c_2u} \int_{1-\alpha}^{\delta_*}\left(\frac{y^2}{x^{1-\varepsilon}}\right)^{\delta-(1-\alpha)}~\mathrm{d}\delta \ll \frac{h^2}{x^2}\frac{\Psi(x,y)e^{-c_2u}}{\log(y^2/x^{1-\varepsilon})} \left(\frac{y^2}{x^{1-\varepsilon}}\right)^{\delta_*-(1-\alpha)}.
\end{equation}
Note that \[1+2\varepsilon\leq \frac{1}{1-2\varepsilon}\leq u<\frac{2}{1-\varepsilon}.\]
We have \[\frac{\log (u+1)}{\varepsilon^2(\log y)}y^{\delta_*-(1-\alpha)}\leq \frac{x}{h}.\] Raising both sides to the power of \(2-u(1-\varepsilon)>0,\) we have 
\[\left(\frac{\log (u+1)}{\varepsilon^2(\log y)}\right)^{2-u(1-\varepsilon)}\left(\frac{y^2}{x^{1-\varepsilon}}\right)^{\delta_*-(1-\alpha)}\leq \left(\frac{x}{h}\right)^{2-u(1-\varepsilon)}.\]
This leads to a bound for \eqref{deltastar} of \[\ll_\varepsilon\Psi(x,y)e^{-c_3u}\left(\frac{h}{x}\right)^{u(1-\varepsilon)}\frac{(\log y)^{2-u(1-\varepsilon)}}{\log(y^2/x^{1-\varepsilon})}\ll_\varepsilon \left(\frac{h}{x}\right)^{1+\varepsilon/2}\Psi(x,y)(\log y)e^{-c_3u}.\] 
Similarly, the portion of the integral with \(\delta>\delta_*\) is 
\begin{align*}
\ll\frac{h}{x}\Psi(x,y)e^{-c_2u}\int_{\delta_*}^{(1-\varepsilon)/2}\left(\frac{y}{x^{1-\varepsilon}}\right)^{\delta-(1-\alpha)}~\mathrm{d}\delta
&\ll\frac{h}{x}\frac{\Psi(x,y)e^{-c_2u}}{\log (x^{1-\varepsilon}/y)}\left(\frac{y}{x^{1-\varepsilon}}\right)^{\delta_*-(1-\alpha)}\\&\ll_\varepsilon\left(\frac{h}{x}\right)^{u(1-\varepsilon)}\Psi(x,y)e^{-c_3u}(\log y)^{-u(1-\varepsilon)}\\&\leq \left(\frac{h}{x}\right)^{1+\varepsilon/2}\Psi(x,y)e^{-c_3u}.
\end{align*}

 An identical calculation yields the same bound for \(\Gamma\cap\Gamma_0^-.\) 
 \end{proof}
\begin{lemma}[Contribution from the remainder] Let \(\varepsilon>0\) be small. Suppose \(y\leq x\) is in the range \eqref{yandh}, and \(x\) is sufficiently large in terms of \(\varepsilon.\) Then
\[\int_{\Gamma\setminus \left(\Gamma_0 \cup\left(\bigcup_\rho \Gamma_\rho\right)\right)}|\zeta(s,y)x^s|~|\mathrm{d}s|\ll \Psi(x,y)\sqrt{\log x \log y}  \frac{H}{x^{1/2-2\varepsilon}}.\]
\end{lemma}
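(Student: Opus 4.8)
The plan is to bound $|\zeta(s,y)x^s|$ pointwise on the remainder path $\Gamma\setminus(\Gamma_0\cup\bigcup_\rho\Gamma_\rho)$ and then multiply by the total length of that path. By construction, any point $s=\sigma+it$ on the remainder path has $\sigma=(1+\varepsilon)/2$: these are exactly the heights $t$ for which no component of $\Gamma_0$ or any $\Gamma_\rho$ lies to the right of the line $\Re(s)=(1+\varepsilon)/2$. Since $(1+\varepsilon)/2<\alpha$ for $x$ large (using $\alpha\to 1$), such $s$ lies strictly to the left of $\alpha$ and to the right of (or on) $\Gamma$, so Lemma~\ref{powersave} — as invoked in the displayed consequence just before Lemma~\ref{contzeros} — gives
\[
|x^s\zeta(s,y)|\ll \frac{x^\alpha\zeta(\alpha,y)}{x^{(1-\varepsilon)(\alpha-\sigma)}}
= x^\alpha\zeta(\alpha,y)\, x^{-(1-\varepsilon)(\alpha-(1+\varepsilon)/2)}.
\]
Using $\alpha\geq 1-\varepsilon\lambda$ (from \eqref{alphalambda}), one checks $(1-\varepsilon)(\alpha-(1+\varepsilon)/2)\geq 1/2-2\varepsilon$ for $x$ sufficiently large in terms of $\varepsilon$, so the pointwise bound is $\ll x^\alpha\zeta(\alpha,y)\,x^{-(1/2-2\varepsilon)}$.

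Next I would bound the length of the remainder path. The set of heights $t$ in question is contained in $[-H',H']$ with $H'\ll H+\max_\rho|\Gamma_\rho|\ll H+y^{1/2}\ll H$ (as $y^{1/2}\leq x^{1/2}$ and the relevant zeros have $|\gamma|\ll H+x^{1/2}$); on this set the path is a union of vertical segments on the single line $\Re(s)=(1+\varepsilon)/2$, so
\[
\int_{\Gamma\setminus(\Gamma_0\cup\bigcup_\rho\Gamma_\rho)}|\mathrm{d}s|\ll H.
\]
Combining with the pointwise estimate and then invoking the Hildebrand--Tenenbaum bound \eqref{hildtenboundzeta}, $x^\alpha\zeta(\alpha,y)\ll\Psi(x,y)\sqrt{\log x\log y}$, yields
\[
\int_{\Gamma\setminus(\Gamma_0\cup\bigcup_\rho\Gamma_\rho)}|\zeta(s,y)x^s|~|\mathrm{d}s|
\ll \Psi(x,y)\sqrt{\log x\log y}\;\frac{H}{x^{1/2-2\varepsilon}},
\]
as claimed.

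The only genuinely delicate point is confirming that every $s$ on the remainder path satisfies the hypotheses of Lemma~\ref{powersave} (or rather that it lies in the region where the already-established consequence of that lemma applies), namely that it is to the right of $\Gamma$ but left of $\alpha$: this is immediate from the definition of $\Gamma$ as selecting the \emph{largest} available $\sigma$ and defaulting to $(1+\varepsilon)/2$, together with $\alpha>(1+\varepsilon)/2$. Everything else — the exponent arithmetic $(1-\varepsilon)(\alpha-(1+\varepsilon)/2)\geq 1/2-2\varepsilon$, the crude length bound, and the appeal to \eqref{hildtenboundzeta} — is routine. I do not expect any serious obstacle here; the lemma is essentially a clean corollary of the machinery set up in Lemma~\ref{powersave} and the contour construction.
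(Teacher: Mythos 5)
Your proof is correct and follows essentially the same route as the paper: bound $|\zeta(s,y)x^s|$ pointwise on $\Re(s)=(1+\varepsilon)/2$ via the consequence of Lemma~\ref{powersave} together with $1-\alpha\leq\varepsilon$ (your use of $1-\alpha\leq\varepsilon\lambda$ is stronger but equally valid), multiply by the $\ll H$ length of the path, and finish with \eqref{hildtenboundzeta}. The only superfluous step is the $H'\ll H+y^{1/2}\ll H$ discussion, since the remainder path lies in $|t|\leq H$ by the definition of $\Gamma$.
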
\begin{proof}
 Given that this integral lies on the line \(\Re(s)=(1+\varepsilon)/2,\) we have
 \begin{align*}
\int_{\Gamma\setminus (\Gamma_0\cup(\bigcup_\rho \Gamma_\rho))}|\zeta(s,y)x^s| ~|\mathrm{d}s|
 \ll H \frac{\zeta(\alpha,y)x^\alpha }{x^{(1-\varepsilon)(\alpha-1/2-\varepsilon/2)}} \ll \Psi(x,y)\sqrt{\log x \log y}  \frac{H}{x^{1/2-2\varepsilon}} 
,\end{align*}
 where we have used that \(1-\alpha\leq \varepsilon.\) \end{proof}
\begin{lemma}[Contribution from the horizontal]  Let \(\varepsilon>0\) be small. Suppose \(y\leq x\) is in the range \eqref{yandh}, and \(x\) is sufficiently large in terms of \(\varepsilon.\)  Then
 \[ \int_{\substack{\sigma \pm iH\\\Gamma(\pm H)\leq \sigma\leq \alpha}}|\zeta(s,y)x^s |~|\mathrm{d}s|\ll\frac{\Psi(x,y)}{\sqrt{u}}.\] 
 \end{lemma}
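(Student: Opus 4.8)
The plan is to invoke the power-saving bound for \(|x^s\zeta(s,y)|\) recorded just after Lemma~\ref{powersave}, which is valid for every \(s\) lying on or to the right of \(\Gamma\) and to the left of the line \(\Re(s)=\alpha.\) The two horizontal segments, at imaginary heights \(\pm H\) and running between \(\Re(s)=\Gamma(\pm H)\) and \(\Re(s)=\alpha,\) lie in precisely this region: each piece of the skeleton \(\Gamma_0',\Gamma_0^\pm,\Gamma_\rho',\Gamma_\rho^\pm\) and the fallback line \(\Re(s)=(1+\varepsilon)/2\) has real part at most \(\alpha\) (for the zero paths this uses \(\nu\geq\lambda\) together with \(1-\alpha\leq\varepsilon\lambda\) from \eqref{alphalambda}, giving \(1-(1-\varepsilon)\nu\leq\alpha,\) and one checks the remaining pieces similarly), so \(\Gamma(t)\leq\alpha\) for all \(t\) and the points \(\sigma+iH\) with \(\Gamma(H)\leq\sigma\leq\alpha\) are to the right of \(\Gamma\) and to the left of \(\alpha.\) Hence for such \(s\) one has \(|\zeta(s,y)x^s|\ll |\zeta(\alpha+iH,y)x^{\alpha+iH}|\,x^{-(1-\varepsilon)(\alpha-\sigma)}.\)

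Next I would integrate this over \(\sigma\in[\Gamma(H),\alpha].\) Substituting \(v=\alpha-\sigma,\)
\[\int_{\Gamma(H)}^{\alpha}x^{-(1-\varepsilon)(\alpha-\sigma)}\,\mathrm{d}\sigma\leq\int_0^{\infty}x^{-(1-\varepsilon)v}\,\mathrm{d}v=\frac{1}{(1-\varepsilon)\log x}\ll\frac{1}{\log x},\]
so the segment at height \(H\) contributes \(\ll |\zeta(\alpha+iH,y)|x^\alpha/\log x\leq \zeta(\alpha,y)x^\alpha/\log x,\) the last inequality being the triangle inequality \(|\zeta(\alpha+iH,y)|\leq\zeta(\alpha,y)\) for the Dirichlet series defining \(\zeta(\cdot,y).\) The segment at \(-iH\) is bounded identically. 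Finally, by the Hildebrand--Tenenbaum estimate \eqref{hildtenboundzeta} we have \(x^\alpha\zeta(\alpha,y)\ll\Psi(x,y)\sqrt{\log x\log y},\) and therefore
\[\frac{\zeta(\alpha,y)x^\alpha}{\log x}\ll\Psi(x,y)\frac{\sqrt{\log x\log y}}{\log x}=\Psi(x,y)\sqrt{\frac{\log y}{\log x}}=\frac{\Psi(x,y)}{\sqrt{u}},\]
which is the claimed bound.

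There is no genuine obstacle here; the only points needing a moment's care are verifying that the horizontal segments sit in the region where the post-Lemma~\ref{powersave} bound applies, and observing that the crude estimate \(|\zeta(\alpha+iH,y)|\leq\zeta(\alpha,y)\) already suffices — one need not (and, since \(H\) may exceed \(e^{(\log y)^{3/2-\varepsilon}}\) when \(y\) is small, cannot in general) appeal to the sharper bound \eqref{quantbounds} here. The arithmetic content is simply that the factor \(1/\log x\) gained from integrating \(x^{-(1-\varepsilon)(\alpha-\sigma)}\) across the segment exactly offsets the \(\sqrt{\log x\log y}\) loss in \eqref{hildtenboundzeta}, leaving \(\sqrt{\log y/\log x}=1/\sqrt{u}.\)
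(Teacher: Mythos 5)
Your proof is correct and follows essentially the same route as the paper: both apply the post-Lemma~\ref{powersave} power-saving bound together with the triangle inequality $|\zeta(\alpha+iH,y)|\leq\zeta(\alpha,y)$, integrate $x^{-(1-\varepsilon)(\alpha-\sigma)}$ across the horizontal segment to gain a factor $1/\log x$, and then invoke \eqref{hildtenboundzeta} to arrive at $\Psi(x,y)/\sqrt{u}$. The only difference is cosmetic — you bound the $\sigma$-integral by extending to $\int_0^\infty$ while the paper extends the lower limit to $(1+\varepsilon)/2$ — and your remarks on why \eqref{quantbounds} cannot be used here and on the geometric fact that $\Gamma(t)\leq\alpha$ are accurate side observations that the paper leaves implicit.
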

 \begin{proof}We have
 \begin{align*} 
\int_{\Gamma(H)+iH}^{\alpha+iH}  \left|\zeta(s,y)x^s\right| ~|\mathrm{d}s| \ll \zeta(\alpha,y)x^\alpha\int_{(1+\varepsilon)/2}^{\alpha} x^{-(1-\varepsilon)(\alpha-\sigma)}~\mathrm{d}\sigma
  \ll \frac{\zeta(\alpha,y)x^\alpha }{\log x.}\ll  \frac{\Psi(x,y)}{\sqrt{u}},
 \end{align*} and we can similarly  bound the lower horizontal.
\end{proof}

 \begin{lemma}[Contribution near real axis]\label{contribnearreal} Uniformly for \(2\leq y\leq x\) we have 
\[\frac{1}{2\pi i}\int_{\alpha-i/\log y}^{\alpha+i/\log y}\zeta(s,y)x^s ~\mathrm{d}s=\alpha\Psi(x,y)\left(1+O\left(\frac{1}{u}+\frac{\log y}{y}\right)\right),\] and the same estimate is true for
\[\frac{1}{2\pi}\int_{\alpha-i/\log y}^{\alpha+i/\log y}|\zeta(s,y)x^s| ~|\mathrm{d}s|.\] 
We also have \[\frac{1}{2\pi}\int_{\alpha-i/\log y}^{\alpha+i/\log y}|s-1||\zeta(s,y)x^s| ~|\mathrm{d}s|\ll\frac{\Psi(x,y)\log(u+1)}{\log y}.\] 
\end{lemma}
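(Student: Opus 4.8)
\textbf{Proof proposal for Lemma \ref{contribnearreal}.}

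The plan is to reduce all three estimates to the known Hildebrand--Tenenbaum saddle point expansion for $\Psi(x,y)$, together with the quantitative bound \eqref{quantbounds} on $|\zeta(\alpha+it,y)|/\zeta(\alpha,y)$ near the real axis. First I would record the identity, valid since the integrand is entire in $\Re(s)>0$, that completing the vertical segment to the full line at $\Re(s)=\alpha$ recovers $\Psi(x,y)$ up to a small correction: Perron's formula (or the Hildebrand--Tenenbaum analysis directly) gives $\frac{1}{2\pi i}\int_{\alpha-i\infty}^{\alpha+i\infty}\zeta(s,y)\frac{x^s}{s}\,\mathrm{d}s=\Psi(x,y)$, but here we have $\int \zeta(s,y)x^s\,\mathrm{d}s$ with no $1/s$, which is instead (essentially) $\frac{\mathrm{d}}{\mathrm{d}\log x}\Psi(x,y)$ smoothed; the cleanest route is to note that on the short segment $|t|\le 1/\log y$ we may write $x^s=x^\alpha x^{it}$ and expand, using that $\zeta(\alpha+it,y)=\zeta(\alpha,y)(1+O(u t^2\log^2 y))$ for such small $t$ via the Taylor expansion of $\log\zeta(\alpha+it,y)$ around $t=0$ (whose second derivative is $\asymp \sum_{p\le y}(\log p)^2 p^{-\alpha}\asymp u (\log y)^2$, the standard $\sigma_2$ in the saddle point method).

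Concretely, for the first estimate I would substitute $s=\alpha+it$, getting $\frac{1}{2\pi}\int_{-1/\log y}^{1/\log y}\zeta(\alpha+it,y)x^{\alpha+it}\,\mathrm{d}t$. Factor out $x^\alpha\zeta(\alpha,y)$ and use the local expansion to replace $\zeta(\alpha+it,y)$ by $\zeta(\alpha,y)$ at the cost of a relative error $O(u t^2(\log y)^2)$, which integrates to $O(1/u)$ after multiplying by the length $1/\log y$. The remaining integral $\int_{-1/\log y}^{1/\log y} x^{it}\,\mathrm{d}t = \frac{2\sin((\log x)/\log y)}{\log x}=\frac{2\sin u}{\log x}$; hmm — this is oscillatory and not simply $\asymp 1/\log x$, so the honest thing is to extend the integral to all of $\mathbb{R}$, where $\frac{1}{2\pi i}\int_{\alpha-i\infty}^{\alpha+i\infty}\zeta(s,y)x^s\,\mathrm{d}s$ does not converge. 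The resolution, and what I expect to be the main technical point, is that we should \emph{not} try to compute the bare integral but rather recognise it through the Hildebrand--Tenenbaum formula: their Theorem 1 expresses $\Psi(x,y)$ via $\frac{x^\alpha\zeta(\alpha,y)}{\alpha\sqrt{2\pi\sigma_2}}$ times $(1+o(1))$, where $\sigma_2=\sum_p \frac{(\log p)^2 p^\alpha}{(p^\alpha-1)^2}\asymp u(\log y)^2$ and the Gaussian main contribution to the Perron integral $\frac{1}{2\pi i}\int \zeta(s,y)\frac{x^s}{s}\,\mathrm{d}s$ comes precisely from $|t|\lesssim 1/\sqrt{\sigma_2}\asymp 1/(\sqrt u\log y)$, which lies \emph{inside} our segment $|t|\le 1/\log y$. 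So the correct strategy is: (i) reinstate the $1/s$ by writing $1/s=1/\alpha+O(|t|/\alpha^2)=1/\alpha+O(t)$ on the segment, so $\int_{\alpha-i/\log y}^{\alpha+i/\log y}\zeta(s,y)x^s\,\mathrm{d}s = \alpha\int_{\alpha-i/\log y}^{\alpha+i/\log y}\zeta(s,y)\frac{x^s}{s}\,\mathrm{d}s + O(\log y\cdot \text{something})$ — more carefully, $\zeta(s,y)x^s = \alpha\,\zeta(s,y)\frac{x^s}{s}\cdot\frac{s}{\alpha}$ and $s/\alpha = 1+it/\alpha$, so the main term is $\alpha$ times the $1/s$-integral and the error is $\frac{1}{2\pi}\int_{-1/\log y}^{1/\log y}\frac{|t|}{\alpha}|\zeta(\alpha+it,y)|x^\alpha\,\mathrm{d}t$; (ii) bound the full-line $1/s$-integral minus the segment by $\int_{|t|>1/\log y}|\zeta(\alpha+it,y)|\frac{x^\alpha}{|t|}\,\mathrm{d}t$, which by \eqref{quantbounds} (with $1/\log y\le |t|\le e^{(\log y)^{3/2-\varepsilon}}$) is $\ll x^\alpha\zeta(\alpha,y)\int_{|t|>1/\log y}\frac{e^{-c_0 u t^2/(t^2+(1-\alpha)^2)}}{|t|}\,\mathrm{d}t$; since $1-\alpha=o(1/\log y)$ and $u\to\infty$ this is $\ll x^\alpha\zeta(\alpha,y)e^{-cu}$ for a suitable $c>0$ (split at $|t|=1$ and beyond which use a trivial $|\zeta|\le\zeta(\alpha,y)$ only where the bound still applies, then handle $|t|>e^{(\log y)^{3/2-\varepsilon}}$ separately — here one uses the crude bound $|\zeta(s,y)|\le \zeta(\alpha,y)$ and the fact that we never needed convergence because actually we compare to the \emph{known} value $\Psi(x,y)$, i.e. $\frac{1}{2\pi i}\int_{\alpha-i\infty}^{\alpha+i\infty}\zeta(s,y)\frac{x^s}{s}\,\mathrm{d}s$ does converge conditionally and equals $\Psi(x,y)$ by the Hildebrand--Tenenbaum theorem, so the tail beyond $e^{(\log y)^{3/2-\varepsilon}}$ is swallowed into their $o(1)$). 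Combining, the $1/s$-integral over the segment is $\Psi(x,y)(1+O(1/u+\log y/y))$ — the $\log y/y$ term arising from the worst case $y$ small where the prime-power corrections and the discreteness of $\Psi$ enter the Hildebrand--Tenenbaum error; and multiplying by $\alpha$ gives the first claim.

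For the absolute-value version, the same bounds apply verbatim since every step above was already an estimate on $|\zeta(s,y)x^s|$ except the extraction of the oscillatory main term; but the point is that over the short segment $\int_{-1/\log y}^{1/\log y}|\zeta(\alpha+it,y)|x^\alpha\,\mathrm{d}t$ is, by the local Gaussian expansion $|\zeta(\alpha+it,y)|=\zeta(\alpha,y)e^{-\sigma_2 t^2/2+O(\sigma_3|t|^3)}$, equal to $\zeta(\alpha,y)x^\alpha\cdot\frac{1}{\sqrt{\sigma_2}}\sqrt{2\pi}(1+O(1/u))$ (the Gaussian is concentrated well within $|t|\le1/\log y$ since $1/\sqrt{\sigma_2}\ll 1/(\sqrt u\log y)$), which is again $\alpha\Psi(x,y)(1+O(1/u+\log y/y))$ by Hildebrand--Tenenbaum. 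For the third estimate, $|s-1|\le 1-\alpha+1/\log y\ll \log(u+1)/\log y$ on the segment (using \eqref{alpha} for $1-\alpha$), so we simply pull this factor out of the second estimate, giving $\ll \frac{\log(u+1)}{\log y}\cdot\Psi(x,y)$ — note we do not even need the full strength of the previous displays here, just $\int|\zeta(\alpha+it,y)|x^\alpha\,\mathrm{d}t\ll\Psi(x,y)$. The main obstacle throughout is bookkeeping the Hildebrand--Tenenbaum error term carefully enough to land exactly on $O(1/u+\log y/y)$ rather than something weaker, and ensuring the tail estimate via \eqref{quantbounds} genuinely beats $1/u$ — both are routine given \eqref{hildtenboundzeta}, \eqref{quantbounds}, and \eqref{alpha}, but require attention to the range $y$ close to $2$ where $u$ is large and the secondary term $\log y/y$ dominates.
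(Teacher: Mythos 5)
The paper's proof of this lemma is essentially a two-line citation: the first two displayed estimates are exactly \cite[Lemma 12]{hilten} combined with \cite[Theorem 1]{hilten}, and the third follows from the elementary bound $|s-1|\le|1-\alpha|+1/\log y\ll\log(u+1)/\log y$ pulled out of the second. Your proposal, by contrast, attempts to reconstruct the content of HT's Lemma~12 from scratch via the local Gaussian expansion of $\log\zeta(\alpha+it,y)$ around $t=0$ and the tail bound \eqref{quantbounds}. This is a genuinely different route, and it would work in principle, but as written it has real gaps.

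The most serious is the assertion ``$1-\alpha=o(1/\log y)$.'' This is false in general: \eqref{alpha} gives $1-\alpha=\bigl(\log(u\log(u+1))+O(1)\bigr)/\log y$, so $1-\alpha\gg 1/\log y$ as soon as $u$ is bounded away from $0$, and $1-\alpha$ is \emph{larger} than $1/\log y$ by a factor $\asymp\log(u+1)$ in the bulk of the range. Consequently the exponent in \eqref{quantbounds} at $|t|\asymp 1/\log y$ is only $\asymp u/(\log(u+1))^2$, not $\asymp u$, so your claimed tail saving $e^{-cu}$ should read $e^{-cu/(\log(u+1))^2}$ (this is the same exponent appearing in Lemma~\ref{intheight}). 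That saving still beats $1/u$ for $u\to\infty$, so the conclusion is salvageable, but the reasoning needs to be corrected and it is no longer ``routine.'' A second gap is that \eqref{quantbounds} is only valid for $\log x<y\le x$ with $y$ sufficiently large, yet the lemma is asserted uniformly for $2\le y\le x$; your argument does not cover small $y$, whereas HT's Lemma~12 and Theorem~1 do. Third, the handling of the $1/s$-integral over the full vertical line is logically circular as written: you cannot estimate the tail $|t|>e^{(\log y)^{3/2-\varepsilon}}$ by absolute values since $|\zeta(\alpha+it,y)|$ does not decay, and your resolution (``swallowed into HT's $o(1)$'') amounts to invoking HT Theorem~1, which is the very result you set out to avoid citing. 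The cleanest fix is to do what the paper does and cite HT Lemma~12 directly for the truncated integral, rather than extend to the full line and truncate back. Your treatment of the third estimate, and the remark that the extraction of $\alpha$ via $s=\alpha(1+it/\alpha)$ produces an error controlled by that same third estimate, are both correct and align with the paper.
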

\begin{proof} 
The first two results follow from \cite[Lemma 12]{hilten} and then \cite[Thm.~1]{hilten} (noting that both sides in \cite[Thm.~1]{hilten} are positive so we are free to divide by \(1+O(1/u+\log y/y)\)). 

The final estimate follows from the fact that  \(|s-1|\leq |\Re(s-1)|+|\Im(s-1)|\leq |1-\alpha|+1/\log y\ll\log(u+1)/\log y,\) by the approximate formula \eqref{alpha}.
\end{proof}
\begin{lemma}[Contribution at intermediate height]\label{intheight}  Let \(\varepsilon> 0.\)  Suppose \(\log x< y\leq x\) is sufficiently large  in terms of \(\varepsilon.\) Then uniformly \[\int_{\substack{\alpha+it\\1/\log y\leq |t|\leq \log(u+1)/(\varepsilon^2\log y)}}|\zeta(s,y)x^s| ~|\mathrm{d}s|\ll_{\varepsilon} \Psi(x,y)e^{-c_4u/(\log(u+1))^2} \] and  \[\int_{\substack{\alpha+it\\1/\log y\leq |t|\leq \log(u+1)/(\varepsilon^2\log y)}}|s-1||\zeta(s,y)x^s| ~|\mathrm{d}s|\ll_{\varepsilon} \frac{\Psi(x,y)e^{-c_4u/(\log(u+1))^2}}{\log y} \] for some absolute constant \(c_4>0.\)    
\end{lemma}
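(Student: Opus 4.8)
The plan is to reduce the whole estimate to the quantitative decay bound \eqref{quantbounds} of Hildebrand and Tenenbaum, together with the comparison \eqref{hildtenboundzeta} between $x^{\alpha}\zeta(\alpha,y)$ and $\Psi(x,y)$. First I would check that, for $y$ sufficiently large in terms of $\varepsilon$, the segment of integration is non-empty and lies inside the region where \eqref{quantbounds} is valid: since $y\le x$ forces $u\ge 1$ we have $\log(u+1)\ge\log 2>\varepsilon^{2}$, so the lower endpoint $1/\log y$ is at most the upper endpoint $\log(u+1)/(\varepsilon^{2}\log y)$; and since $y>\log x$ we have $u<\log x/\log\log x$, hence $\log(u+1)\ll\log\log x<\log y$ and $\log(u+1)/(\varepsilon^{2}\log y)<\varepsilon^{-2}\le e^{(\log y)^{3/2-\varepsilon}}$.

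Next, on the segment $s=\alpha+it$ with $1/\log y\le|t|\le\log(u+1)/(\varepsilon^{2}\log y)$ one has $|x^{s}|=x^{\alpha}$, and by the approximate formula \eqref{alpha} the quantity $(1-\alpha)\log y=\log(u\log(u+1))+O(1)$ is $\ll\log(u+1)$ with an absolute implied constant (using $\log(u+1)\ge\log 2$). Consequently $1+(1-\alpha)^{2}(\log y)^{2}\ll(\log(u+1))^{2}$, so for $|t|\ge 1/\log y$,
\[\frac{t^{2}}{t^{2}+(1-\alpha)^{2}}\ \ge\ \frac{1}{1+(1-\alpha)^{2}(\log y)^{2}}\ \gg\ \frac{1}{(\log(u+1))^{2}},\]
and \eqref{quantbounds} gives, uniformly on the segment,
\[\frac{|\zeta(\alpha+it,y)|}{\zeta(\alpha,y)}\ \ll_{\varepsilon}\ \exp\left(-\frac{c\,u}{(\log(u+1))^{2}}\right)\]
for some absolute $c>0$. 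I would then bound the integral crudely by the length of the segment (which is at most $2\log(u+1)/(\varepsilon^{2}\log y)$) times the supremum of the integrand, and apply \eqref{hildtenboundzeta}. Writing $\sqrt{\log x\log y}=\sqrt{u}\,\log y$ (since $\log x=u\log y$), the factor $\sqrt{\log x\log y}\cdot\log(u+1)/\log y$ collapses to $\sqrt{u}\,\log(u+1)$, giving
\[\int|\zeta(s,y)x^{s}|\,|\mathrm{d}s|\ \ll_{\varepsilon}\ \Psi(x,y)\,\sqrt{u}\,\log(u+1)\,\exp\left(-\frac{c\,u}{(\log(u+1))^{2}}\right).\]
Since $u/(\log(u+1))^{3}\to\infty$, for $u$ above an absolute threshold the prefactor $\sqrt{u}\log(u+1)$ is at most $\exp(c\,u/(2(\log(u+1))^{2}))$; for $u$ below that threshold the left-hand side is $\ll_{\varepsilon}\Psi(x,y)$ while the target right-hand side is $\gg\Psi(x,y)$. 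Either way this gives the first bound with the absolute constant $c_{4}=c/2$.

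For the weighted integral one has, on the same segment, $|s-1|\le(1-\alpha)+|t|\ll\log(u+1)/\log y$ by \eqref{alpha}, so it is bounded by $\log(u+1)/\log y$ times the previous estimate; the extra factor $\log(u+1)$ is absorbed into a slightly smaller exponential (again using $u/(\log(u+1))^{3}\to\infty$, at the cost of shrinking $c_{4}$), giving the second bound $\ll_{\varepsilon}\Psi(x,y)\exp(-c_{4}u/(\log(u+1))^{2})/\log y$. The step to watch is the bookkeeping of constants: one must keep $c_{4}$ \emph{absolute} while relegating all $\varepsilon$-dependence — the implied constant in \eqref{quantbounds} and the factor $\varepsilon^{-2}$ from the length of the segment — to the $\ll_{\varepsilon}$ symbol, which is legitimate because that dependence is purely multiplicative and is swamped by the saving $\exp(-c\,u/(\log(u+1))^{2})$. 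No idea beyond \eqref{quantbounds} is needed here; the sole point is that restricting to $|t|\ge 1/\log y$ already forces $t^{2}/(t^{2}+(1-\alpha)^{2})\gg(\log(u+1))^{-2}$, which is exactly what produces the $(\log(u+1))^{2}$ loss in the exponent compared with the clean $e^{-c_{0}u}$ available at larger heights.
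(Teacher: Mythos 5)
Your proof is correct and follows essentially the same route as the paper: it deduces from \eqref{alpha} that $|1-\alpha|\ll\log(u+1)/\log y$, observes that the restriction $|t|\geq 1/\log y$ then forces $t^2/(t^2+(1-\alpha)^2)\gg 1/(\log(u+1))^2$ so that \eqref{quantbounds} yields the $e^{-cu/(\log(u+1))^2}$ decay, bounds the integral by segment length times supremum via \eqref{hildtenboundzeta}, absorbs the polynomial-in-$u$ prefactor into a slightly smaller exponential, and handles the second integral with $|s-1|\ll_\varepsilon\log(u+1)/\log y$. The only cosmetic difference is that you spell out the absorption of $\sqrt{u}\log(u+1)$ (and the extra $\log(u+1)$ in the weighted case) more explicitly than the paper, which leaves it implicit.
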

\begin{proof} We  use  \eqref{quantbounds} which implies that if \(1/\log y\leq|t|\leq 2/\varepsilon^2\) then
\[|\zeta(\alpha+it,y)|\ll\zeta(\alpha,y)e^{-c_0u/ (\log(u+1))^2}\] for some \(c_0>0.\)  Indeed,  we have \(|1-\alpha|\ll \log(u+1)/\log y\) from the approximation \eqref{alpha}, so that \(t^2/(t^2+(1-\alpha)^2)\gg\min\{1, t^2/(1-\alpha)^2\}\gg 1/(\log(u+1))^2.\)  The first integral is then certainly
\[ \ll \frac{\log (u+1)}{\varepsilon^2\log y}\frac{\Psi(x,y)\sqrt{\log x \log y}}{e^{c_0 u/(\log (u+1))^2}},\] which gives  the first desired estimate.  

Given that \(|s-1|\leq |\Re(s-1)|+|\Im(s-1)|\ll_\varepsilon \log(u+1)/\log y,\) again because \(|1-\alpha|\ll \log(u+1)/\log y,\) the second estimate follows.
\end{proof}
Assuming  \(H\leq x^{13/30-5\varepsilon},\)  we have shown for \(y\leq x^\varepsilon\) satisfying  \eqref{yandh}, with \(x\)  sufficiently large in terms of \(\varepsilon,\) that \(\Psi(x+h,y)-\Psi(x,y)\) equals \begin{equation}\label{total}\begin{split}\frac{h}{2\pi i}\frac{1}{x}\int_{\Gamma\cap\Gamma_0}\zeta(s,y)x^s ~\mathrm{d}s &+O_\varepsilon\left(\frac{h^2}{x^2}\frac{\Psi(x,y)\log(u+1)}{\log y}\right)\\&+O_\varepsilon\left(\frac{h}{x}\Psi(x,y)e^{-c((\log x)/\log \log x)^{1/3}}\right)+O\left(\frac{\Psi(x,y)}{H\sqrt{u}}\right)\\&+O\left(\frac{h}{x}\Psi(x,y)\sqrt{\log x \log y}  \frac{H}{x^{1/2-2\varepsilon}} \right)+O\left(\frac{x\log x}{H}\right).\end{split}\end{equation}  We set \(H= xe^{(\log x)^{9/10}}/h.\) By \eqref{yandh} we have \(h\geq x^\theta,\) and so \(H\leq x^{1-\theta+o(1)}\). The earlier bound on \(H\) is then satisfied if, say,  \(\theta\geq 17/30+10\varepsilon.\) By \eqref{density}, the final error term in \eqref{total} is certainly \(\ll (h/x)\Psi(x,y)/e^{\sqrt{\log x}}.\)  

 Recall that in the definition of \(\Gamma\) we have \(|t|\leq H,\) and so the integral in \eqref{total} may depend on \(h,\) which gives us a main term not of the desired form.  To overcome this, we first define \(\Gamma_\infty\) to be the contour \(\Gamma\) but without the restriction that  \(|t|\leq H,\) and set \[f_\theta(x,y)=\frac{1}{2\pi i}\frac{1}{x}\int_{\Gamma_\infty\cap\Gamma_0}\zeta(s,y)x^s ~\mathrm{d}s.\] Note that this function depends on \(\theta\) because the contour depends on \(\varepsilon,\) which in turn depends on \(\theta.\) The next lemma allows us to approximate the main term of \eqref{total} by \(f_\theta(x,y)h,\) whilst introducing an additional error of only  \[\ll_\varepsilon\frac{h}{x}\frac{\Psi(x,y)}{e^{(\varepsilon/2)(\log x)^{9/10}}},\] completing the proof of Theorem \ref{mainfunc} assuming \(y\leq x^\varepsilon\). 
\begin{lemma}[Contribution of \(\Gamma_0^{\pm}\) at large height]\label{largeheightmain} Let \(\varepsilon>0\) be small.  Suppose \(y\leq x^{1-2\varepsilon}\) is in the range \eqref{yandh}, and \(x\) is sufficiently large in terms of \(\varepsilon.\)  Then
 \[ \int_{(\Gamma_\infty\cap\Gamma_0^\pm)\setminus (\Gamma\cap\Gamma_0^\pm)}|\zeta(s,y)x^s|~|\mathrm{d}s |\ll_\varepsilon\frac{\Psi(x,y)}{e^{{(\varepsilon/2)(\log x)}^{9/10}}}.\] 
\end{lemma}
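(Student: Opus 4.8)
The plan is to rerun, on the tail of the exponential arc alone, the unweighted computation from the second half of the proof of Lemma~\ref{gamma0}. Recall \(H=xe^{(\log x)^{9/10}}/h\), so \(h\le x\) gives \(H\ge e^{(\log x)^{9/10}}\). First I would observe that \((\Gamma_\infty\cap\Gamma_0^\pm)\setminus(\Gamma\cap\Gamma_0^\pm)\) is exactly the part of \(\Gamma_\infty\cap\Gamma_0^\pm\) whose imaginary part exceeds \(H\) in modulus; by symmetry it suffices to treat \(\Gamma_0^+\). Parametrising \(\Gamma_0^+\) by \(\delta\in[1-\alpha,(1-\varepsilon)/2]\), the quantity \(|t(\delta)|=\frac{\log(u+1)}{\varepsilon^2(\log y)y^{1-\alpha}}y^\delta\) is strictly increasing, and since \(H\) comfortably exceeds \(|t(1-\alpha)|=\frac{\log(u+1)}{\varepsilon^2\log y}\) there is a unique \(\delta_H>1-\alpha\) with \[y^{\delta_H}=H\cdot\frac{\varepsilon^2(\log y)y^{1-\alpha}}{\log(u+1)}.\] If \(\delta_H>(1-\varepsilon)/2\) the set is empty and there is nothing to prove; otherwise the set in question is parametrised by \(\delta\in(\delta_H,(1-\varepsilon)/2]\) (possibly with gaps where some \(\Gamma_\rho\) overrides, which only helps).

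Next I would invoke, on \(\Gamma_\infty\cap\Gamma_0^+\), the pointwise bound \(|x^s\zeta(s,y)|\ll x^\alpha\zeta(\alpha,y)\,x^{-(1-\varepsilon)(\delta-(1-\alpha))}\) (writing \(\Re(s)=1-\delta\)): this follows from Lemma~\ref{powersave} by the same argument used for \(\Gamma\) in the discussion after that lemma's statement, since the height cutoff \(H\) plays no role there and one still has \(|t|\le x\) along \(\Gamma_0^+\) because \(y\le x^{1-2\varepsilon}\). Together with \(\bigl|\tfrac{\mathrm d}{\mathrm d\delta}\Gamma_0^+(\delta)\bigr|\ll\frac{\log(u+1)}{\varepsilon^2 y^{1-\alpha}}y^\delta\) (as in the proof of Lemma~\ref{gamma0}) and \(y/x^{1-\varepsilon}\le x^{-\varepsilon}\), the integral becomes
\[\ll x^\alpha\zeta(\alpha,y)\,x^{(1-\varepsilon)(1-\alpha)}\frac{\log(u+1)}{\varepsilon^2 y^{1-\alpha}}\int_{\delta_H}^{(1-\varepsilon)/2}\Bigl(\frac{y}{x^{1-\varepsilon}}\Bigr)^{\delta}\,\mathrm d\delta\ll\frac{x^\alpha\zeta(\alpha,y)\,x^{(1-\varepsilon)(1-\alpha)}\log(u+1)}{\varepsilon^3(\log x)\,y^{1-\alpha}}\Bigl(\frac{y}{x^{1-\varepsilon}}\Bigr)^{\delta_H}.\]
Writing \((y/x^{1-\varepsilon})^{\delta_H}=y^{\delta_H}x^{-(1-\varepsilon)\delta_H}\) and substituting the defining relation for \(\delta_H\), the prefactors telescope, leaving a bound of the shape \[\ll\frac{x^\alpha\zeta(\alpha,y)\,x^{(1-\varepsilon)(1-\alpha)}\log y}{\varepsilon\log x}\cdot H\,x^{-(1-\varepsilon)\delta_H}.\]

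The crucial step is then to show \(H\,x^{-(1-\varepsilon)\delta_H}\le e^{-\varepsilon(\log x)^{9/10}}\). From the defining relation one has \(\delta_H\log x=u\,\delta_H\log y=u\bigl(\log H+\log\tfrac{\varepsilon^2(\log y)y^{1-\alpha}}{\log(u+1)}\bigr)\), and by \eqref{alpha} the last logarithm equals \(\log(\varepsilon^2\log x)+O(1)\), which is nonnegative for \(x\) large in terms of \(\varepsilon\); hence \(\log\bigl(H\,x^{-(1-\varepsilon)\delta_H}\bigr)\le\bigl(1-(1-\varepsilon)u\bigr)\log H\). Since \(y\le x^{1-2\varepsilon}\) forces \(u\ge 1/(1-2\varepsilon)\), we get \((1-\varepsilon)u\ge(1-\varepsilon)/(1-2\varepsilon)\ge1+\varepsilon\), so \(1-(1-\varepsilon)u\le-\varepsilon\); with \(\log H\ge(\log x)^{9/10}\) the claimed inequality follows. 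It then remains only to bound the surviving factors by \eqref{hildtenboundzeta} (\(x^\alpha\zeta(\alpha,y)\ll\Psi(x,y)\sqrt{\log x\log y}\)), by \eqref{alphalambda} and \eqref{lambdadef} (\(x^{(1-\varepsilon)(1-\alpha)}\le x^{\varepsilon\lambda}=e^{O_\varepsilon((\log x)^{1/3})}\)), and trivially (\(\tfrac{\log y}{\varepsilon\log x}\le1/\varepsilon\)), each of which is \(\ll_\varepsilon e^{(\varepsilon/2)(\log x)^{9/10}}\) for \(x\) large in terms of \(\varepsilon\); this produces the bound \(\ll_\varepsilon\Psi(x,y)\,e^{-(\varepsilon/2)(\log x)^{9/10}}\), and \(\Gamma_0^-\) is identical.

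I expect the only real difficulty to be the step \(H\,x^{-(1-\varepsilon)\delta_H}\le e^{-\varepsilon(\log x)^{9/10}}\); everything else is bookkeeping. This step is exactly where the hypothesis \(y\le x^{1-2\varepsilon}\) is essential — it forces \((1-\varepsilon)u\) to exceed \(1\) by a fixed margin, so that past imaginary height \(H\) the exponential decay of \(|x^s\zeta(s,y)|\) along \(\Gamma_0^+\) strictly beats the exponential growth of its arc length. For \(y\) close to \(x\) one would have \((1-\varepsilon)u\) only barely above \(1\) (or even \(u\le1\)), the tail would fail to converge with room to spare, and a different argument (Section~\ref{largey}) is needed.
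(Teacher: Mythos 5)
Your proof is correct and follows essentially the same route as the paper: parametrise the tail of the exponential arc by $\delta$, apply the power-saving bound from Lemma~\ref{powersave}, integrate the exponential, and use $y\le x^{1-2\varepsilon}$ together with the fact that the tail begins where $|t|\ge H\ge e^{(\log x)^{9/10}}$ to show the resulting factor is super-polynomially small. The only difference is in the final bookkeeping: the paper retains the $e^{-c_0u}$ saving from \eqref{quantbounds}, bounds $\bigl(y/x^{1-\varepsilon}\bigr)^{\delta^*-1+\alpha}$ directly by $y^{-\varepsilon(\delta^*-1+\alpha)}\ll e^{-(\varepsilon/2)(\log x)^{9/10}}$ using $y^{\delta^*-1+\alpha}\gg e^{(\log x)^{9/10}/2}$, whereas you substitute the defining relation for $y^{\delta_H}$ and bound $Hx^{-(1-\varepsilon)\delta_H}$ via the inequality $(1-\varepsilon)u\ge 1+\varepsilon$; these are algebraically equivalent and both deliver the stated bound.
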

\begin{proof} For \(1-\delta+it\in\Gamma_\infty\cap \Gamma_0^+,\)  we have \[|t|=\frac{ \log (u+1)}{\varepsilon^2(\log y)y^{1-\alpha}}y^\delta.\]  
Let \(\delta^*=\max\{\delta: 1-\delta+it\in \Gamma_\infty\cap\Gamma_0^+ \textrm{ and } |t|\leq H\}.\) If \(|t|<H\) at \(\delta=\delta^*,\) then we are done. We may therefore assume this is not the case, so that  \(y^{\delta^*-1+\alpha}\gg_\varepsilon H (\log y)/\log(u+1)\gg e^{(\log x)^{9/10}/2}.\)
By a similar calculation to that in Lemma \ref{gamma0}, we have   
\begin{align*}\int_{(\Gamma_\infty\cap\Gamma_0^+)\setminus (\Gamma\cap\Gamma_0^+)}|\zeta(s,y)x^s|~|\mathrm{d}s|  &\ll\int_{1-\delta+it\in(\Gamma_\infty\cap\Gamma_0^+)\setminus (\Gamma\cap\Gamma_0^+)}\left|\frac{\zeta(\alpha+it,y)x^\alpha}{x^{(1-\varepsilon)(\alpha-(1-\delta))}}\right|\left|\frac{\mathrm{d}}{\mathrm{d}\delta}\Gamma_0^+(\delta)\right|~\mathrm{d}\delta\\ &\ll_\varepsilon \zeta(\alpha,y)x^\alpha e^{-c_0u}\log(u+1)\int_{\delta^*}^{(1-\varepsilon)/2}\left(\frac{y}{x^{1-\varepsilon}}\right)^{\delta-(1-\alpha)}~\mathrm{d}\delta
\\ &\ll_\varepsilon \Psi(x,y)e^{-c_2u}\left(\frac{y}{x^{1-\varepsilon}}\right)^{\delta^*-1+\alpha}.
\end{align*}
Since \[\left(\frac{y}{x^{1-\varepsilon}}\right)^{\delta^*-1+\alpha}\leq \left(\frac{1}{x^{\varepsilon}}\right)^{\delta^*-1+\alpha}\leq \left(\frac{1}{y^{\varepsilon}}\right)^{\delta^*-1+\alpha}\ll_\varepsilon e^{-(\varepsilon/2)(\log x)^{9/10}},\] we obtain the required bound. 

The proof for \((\Gamma_\infty\cap\Gamma_0^-)\setminus (\Gamma\cap\Gamma_0^-)\) follows from an identical calculation.
  \end{proof}
 
 The condition \(y\leq x^\varepsilon\) emerges from the contribution from the zeros \(\rho.\)
 This requirement ultimately comes from needing a large local zero-free region in Lemma \ref{shifting} due to the slow decay of the error term for the relevant Perron integral. To overcome this barrier, in the next section we employ a smooth weight which leads to a better decay rate than that offered by the Perron integral. We are then left to deal with large \(y,\) due to the conditions imposed when bounding the error from \(\Gamma_0^\pm,\) which we do in a later section by different means. 
  
 \section{Smoothing to extend the range}\label{smoothing}
  In this section we consider the case when 
  \[x^\theta \leq h \leq x \quad \textrm{ and } \quad
  x^\varepsilon\leq y\leq x^{1-2\varepsilon},\] 
 where \(17/30<\theta\leq 1,\) with \(\varepsilon=\varepsilon(\theta)>0\) suitably small, and  \(x\)  sufficiently large in terms of \(\varepsilon.\)
 We show that Theorem \ref{mainfunc} holds in this range if we further assume \(y\leq x^{1/2-\varepsilon}.\) Without this assumption, the error term we obtain in this section is in general weaker than desired, and so requires additional work which we handle later on.
 
For  \(0<\kappa<1\) small, which we specify later, let \(W=W_\kappa:(0,\infty)\rightarrow [0,1]\) be a smooth function which equals \(1\) on \((0,1]\) and equals \(0\) on \((1+\kappa,\infty).\) We may choose \(W\) so that the derivative bound \begin{equation}\label{derivbound}\int_1^{1+\kappa}|W^{(m)}(x)| ~\mathrm{d} x\ll_m \kappa^{1-m}\end{equation} holds for all \(m\geq 0.\) Consider the weighted count of \(y(1+\kappa)\)-smooth numbers
 \[\Psi_W(x,y)=\sum_{\substack{n={p_1}^{a_1}\dots {p_k}^{a_k} \\ n\leq x}}W\left(\frac{p_1}{y}\right)^{a_1}\dots W\left(\frac{p_k}{y}\right)^{a_k}.\] \begin{lemma}[Weighted count approximates unweighted count]\label{weightedcount}  Let \(17/30<\theta\leq 1,\) and let \(0<\varepsilon<1.\) The estimate 
 \[\Psi_W(x+h,y)-\Psi_W(x,y)=\Psi(x+h,y)-\Psi(x,y) +O_{\varepsilon}\left(h\frac{\Psi(x,y)}{x}\left(\kappa  +\frac{y}{x\log x}\right)\right)\] holds uniformly  for   \[
x^\theta \leq h \leq x \quad \textrm{ and } \quad
  x^\varepsilon\leq y\leq x \] provided  \(\kappa y\geq 1.\)
 \end{lemma}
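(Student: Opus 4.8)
The plan is to compare the weighted and unweighted short-interval counts by expanding $\Psi_W$ in terms of the values of $W$ on primes in the transition window $(y, y(1+\kappa)]$, and showing that the difference is controlled by smooth numbers having a prime factor in that window. Concretely, write each $n$ counted by $\Psi_W(x+h,y)-\Psi_W(x,y)$ as $n = m \cdot r$, where $m$ is $y$-smooth and $r = \prod p_i^{a_i}$ collects exactly the prime powers with $p_i \in (y, y(1+\kappa)]$. For a number counted by $\Psi(x+h,y)-\Psi(x,y)$ (i.e. genuinely $y$-smooth) the weight is exactly $1$, so the difference $\Psi_W - \Psi$ on the interval is
\[
\sum_{\substack{x < mr \le x+h \\ m \text{ is } y\text{-smooth},\ r > 1}} W\!\left(\tfrac{p_1}{y}\right)^{a_1}\!\!\cdots W\!\left(\tfrac{p_k}{y}\right)^{a_k} - \sum_{\substack{x < n \le x+h \\ n \text{ is } y(1+\kappa)\text{-smooth},\ n \text{ not } y\text{-smooth}}} 1,
\]
and since $0 \le W \le 1$ both of these sums are bounded in absolute value by the number of $n \in (x, x+h]$ that are $y(1+\kappa)$-smooth but not $y$-smooth, i.e. that have at least one prime factor in $(y, y(1+\kappa)]$.

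First I would bound this latter quantity. Each such $n$ factors as $n = pm$ with $p \in (y, y(1+\kappa)]$ prime and $m \le (x+h)/p \le 2x/y$ (say) a $y(1+\kappa)$-smooth number; summing over $p$ and over $m$, and using that for each fixed $m$ the number of admissible primes $p$ with $pm \in (x,x+h]$ is $O(h/(m \log y) + 1)$ (a Brun–Titchmarsh or trivial count over the length-$h/m$ interval), one gets a bound of the shape
\[
\ll \sum_{m \le 2x/y,\ m \text{ is } y(1+\kappa)\text{-smooth}} \left(\frac{h}{m\log y} + 1\right) \ll \frac{h}{\log y}\sum_{m \le 2x/y} \frac{\mathbf 1_{m \text{ smooth}}}{m} + \frac{2x}{y}\,\cdot\frac{\#\{m \le 2x/y : m \text{ is } y(1+\kappa)\text{-smooth}\}}{2x/y}.
\]
The first term should be handled by partial summation against $\Psi(t, y(1+\kappa))$ and the standard fact that $\sum_{m \le z, \text{ smooth}} 1/m \asymp (\log z)\,\Psi(z,y)/z \cdot (\text{something})$ — more simply, $\sum_{m\le z} \mathbf 1_{m\text{ is }y\text{-smooth}}/m \ll (\Psi(z,y)/z)\log z$, and with $z = 2x/y$ one relates $\Psi(2x/y, y(1+\kappa))$ to $\Psi(x,y)$ losing only a factor $\ll 1$ in the relevant range (smoothness is robust under $y \mapsto y(1+\kappa)$ and under $x \mapsto 2x/y$ since $u$ changes by $O(1)$). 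That yields the $\kappa$-free piece $\ll (h/\log y)\cdot(\Psi(x,y)/x)\cdot(\text{bounded})$; but we need the extra saving $\kappa$, which comes from the fact that $p$ ranges over a window of multiplicative length $\kappa$, not a full dyadic range — running the count of primes $p \in (y, y(1+\kappa)]$ with $pm \in (x,x+h]$ more carefully (the primes lie in an interval of length $\min\{h/m, \kappa y\}$ around $x/m$), the main term becomes $\ll h\kappa/(m) \cdot (1/\log y) \cdot (\text{density adjustments})$, delivering the claimed $\kappa \cdot \Psi(x,y)/x \cdot h$. The second, "$+1$" term contributes $\ll (x/y)\cdot \Psi(2x/y, y(1+\kappa))/(2x/y) \ll \Psi(x,y)/\log x$ after again comparing $\Psi$-values, which is absorbed into the stated error $h(\Psi(x,y)/x)(y/(x\log x))$ precisely when $h \ge x^\theta$ makes $x/y \ll h (\Psi(x,y)/x)(y/(x \log x)) \cdot (x/\Psi(x,y))$ — i.e. one checks $x^2/y^2 \ll h$, which holds since $y \ge x^\varepsilon$ and $h \ge x^\theta$ with $\theta > 17/30 > 2(1-\varepsilon)$ for small enough $\varepsilon$; in the borderline regime $y$ close to $x$ the bound is immediate.

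The contribution of prime powers $p^a$ with $a \ge 2$ and $p \in (y, y(1+\kappa)]$ to $\Psi_W$ is negligible: such $n = p^a m$ has $p^2 > y^2$, so the count is $\ll \sum_{a\ge 2}\sum_{p > y}\#\{m : p^a m \in (x,x+h]\} \ll h/y^2 + x^{1/2}/y + \cdots$, which is swallowed by the error term using $y \ge x^\varepsilon$ and $h \ge x^\theta$. Assembling the three pieces — the genuine $\kappa$-window prime contribution, the "$+1$" overflow term, and the prime-power remainder — gives the stated estimate, uniformly for $x^\theta \le h \le x$ and $x^\varepsilon \le y \le x$ with $\kappa y \ge 1$ (the hypothesis $\kappa y \ge 1$ being exactly what guarantees the window $(y, y(1+\kappa)]$ has length $\ge 1$ so that the prime-counting steps are not vacuous/over-counted). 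The main obstacle is the first step: getting the honest factor of $\kappa$ (rather than just a bounded constant) out of the count of smooth numbers with a prime factor in the thin window $(y, y(1+\kappa)]$, which requires care in interchanging the roles of the "short interval in $n$" constraint $n \in (x,x+h]$ and the "short interval in $p$" constraint $p \in (y, y(1+\kappa)]$, and in comparing $\Psi$ at the shifted arguments $2x/y$ and $y(1+\kappa)$ back to $\Psi(x,y)$ without losing more than a constant factor.
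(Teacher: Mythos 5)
Your overall plan --- bound the difference by the number of $n \in (x, x+h]$ having a prime factor in the thin window $(y, y(1+\kappa)]$ --- is correct and is exactly the paper's starting point, but the execution has a real gap, which you yourself flag as ``the main obstacle.'' You insist on summing over $m$ first in all regimes. When $y \leq h$ (say $h$ is close to $x$) the interval $(x/m,(x+h)/m]$ has length $h/m$ which can exceed the window length $\kappa y$; then the per-$m$ prime count is $\ll \kappa y/\log(\kappa y)$, and summing over the $\ll (\kappa x + h)/y$ admissible values of $m$ gives $\ll \kappa(\kappa x + h)/\log(\kappa y)$, whose $\kappa^2 x/\log(\kappa y)$ piece is \emph{not} $\ll \kappa h$ once $h$ is a small power of $x$. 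So your claimed per-$m$ bound $\ll h\kappa/(m\log y)$ is not justified, and ``density adjustments'' does not close this. The paper sidesteps this cleanly by a case split: when $y \leq h$ it sums over $p$ first --- for each of the $\ll \kappa y$ integers $p$ in the window (this is where $\kappa y \geq 1$ is used) there are $\leq h/p + 1 \leq h/y + 1$ choices of $m$, giving $\ll \kappa h + \kappa y \ll \kappa h$ with no prime-counting input at all --- and only when $h < y$ does it sum over $m$, using Brun--Titchmarsh on the $\ll \kappa x/y + O(1)$ values of $m$ together with $h/m \gg x^{2\theta-1}$ to extract the $1/\log x$.

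Two smaller points. Your opening display for $\Psi_W - \Psi$ is not correct as written; the difference is simply $\sum_n w_n$ over $n \in (x,x+h]$ that are $y(1+\kappa)$-smooth but not $y$-smooth, with $0 \leq w_n \leq 1$, and there is no second subtracted sum (you recover the right upper bound anyway, so this is cosmetic). And the appeals to partial summation against $\Psi(t,y(1+\kappa))$ and to $\Psi$-density comparisons are unnecessary overhead: in the stated range $y \geq x^\varepsilon$ one has $\Psi(x,y) \gg_\varepsilon x$, so the target error $h\frac{\Psi(x,y)}{x}\bigl(\kappa + \frac{y}{x\log x}\bigr)$ is simply $\asymp_\varepsilon h\bigl(\kappa + \frac{y}{x\log x}\bigr)$, and elementary counting suffices; the paper indeed only invokes $\Psi(x,y)\gg_\varepsilon x$ at the very end.
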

 \begin{proof}
 The difference between the weighted count and \(\Psi(x+h,y)-\Psi(x,y)\) is  
 \begin{equation}\label{difference}\sum_{\substack{n={p_1}^{a_1}\dots {p_k}^{a_k} \\ x<n\leq x+h\\ \exists p_i>y}}W\left(\frac{p_1}{y}\right)^{a_1}\dots W\left(\frac{p_k}{y}\right)^{a_k}.\end{equation}
 If we first suppose that \(y\leq h,\) then  the difference \eqref{difference} is at most
 \[\sum_{y< p\leq y(1+\kappa) \ }\sum_{x/p<m\leq (x+h)/p}1\leq  \sum_{y< p\leq y(1+\kappa)  } \left(\frac{h}{p}+1\right)\ll\kappa h+\kappa y\ll \kappa h.\]
 If instead we suppose that \(h<y,\) then the difference \eqref{difference} is bounded above by
  \[ \sum_{x/(y(1+\kappa))\leq m\leq (x+h)/y \ } \sum_{x/m<p\leq (x+h)/m} 1 \ll \sum_{x/(y(1+\kappa))\leq m\leq (x+h)/y \ }\frac{h}{m\log (h/m)} .\] Here we used the simple sieve bound, a simple version of the Brun-Titchmarsh theorem, valid for any \(x'\geq 0\) and any \(h'\geq 2\) (see e.g.~\cite[Cor.~3.4]{montvaug}),  \begin{equation}\label{sieve}
  \sum_{x'<p\leq x'+h'}1\ll\frac{h'}{\log h'}.\end{equation}
 We have \(m\asymp x/y.\) Thus \(h/m\asymp hy/x>h^2/x\geq x^{2\theta-1}\geq x^{2/15}.\) The above sum is then
 \[\ll\frac{hy}{x\log x}\sum_{x/(y(1+\kappa))\leq m\leq (x+h)/y \ }1.\]
Since \(h<y,\)  the range of the sum has size at most \((x+y)/y-x(1-\kappa)/y+1=\kappa x/y+2.\) Thus, our sum is 
 \[\ll \frac{hy}{x\log x}\left(\frac{\kappa x}{y}+1\right)=\frac{h}{\log x}\left(\kappa  +\frac{y}{x}\right).\]
  Given that \(y\geq x^\varepsilon,\) and so \(\Psi(x,y)\gg_\varepsilon x\) (see e.g.~ \cite{hild2}),  the proof is complete. \end{proof}
 
 We estimate the weighted sum as we did for the unweighted sum, replacing \(\zeta(s,y)\) with \[\zeta_W(s,y)=\prod_p\left(1-\frac{W(p/y)}{p^s}\right)^{-1}.\] 
 To do this, we need the following result, which is a weighted version of Lemma \ref{shifting}. 

\begin{lemma}
[Shifting into local zero-free region yields cancellation, smoothed version]\label{smoothedshifting} Let \(z\geq 2.\)  Let \(\varepsilon>0\) be small, and let  \(0< \eta\leq1/2.\)   Write \(s_0=\sigma_0+it_0 \neq 1,\) where \(1-(1-\varepsilon)\eta\leq \sigma_0<1+1/\log z.\) Suppose there are no zeros of \(\zeta(s)\) in the region 
\[\Re(s)>1-\eta \quad \textrm{and} \quad \Im(s)\in (t_0-10z^{3\varepsilon\eta},t_0+10z^{3\varepsilon\eta}).\] If 
\( z^{-\varepsilon\eta}\leq \kappa<1,\) then for some \(b=b(\varepsilon)<1\) we have 
\begin{align*}
\left|\sum_{n \geq 1}\frac{\Lambda(n)}{n^{s_0}}W\left(\frac{n}{z}\right)\right|= O&\left(\frac{z^{1-\sigma_0}+1}{|t_0|}\right)+O_\varepsilon\left((\log (|t_0|+2))^b\right)+O_\varepsilon\left(\frac{\log z}{z^{\eta}}\right) \\
&+O_\varepsilon\left(\frac{((\log(|t_0|+z^\eta+2))^b+1/\eta)\log(1/(\kappa\eta))}{z^{\varepsilon\eta/2}}\right).
\end{align*}  
\end{lemma}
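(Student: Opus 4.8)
The plan is to mimic the proof of Lemma~\ref{shifting} closely, the only new feature being that the sharp cutoff $n\le z$ is replaced by the smooth cutoff $W(n/z)$, which via Mellin inversion produces a rapidly decaying kernel in place of the $z^s/s$ of Perron's formula; this is what lets us take the much narrower zero-free window of height $\asymp z^{3\varepsilon\eta}$ (rather than $\asymp z^{\eta}$) while still gaining a saving. First I would write, for $\Re(s)$ large enough,
\[
\sum_{n\ge 1}\frac{\Lambda(n)}{n^{s_0}}W\!\left(\frac{n}{z}\right)=\frac{1}{2\pi i}\int_{(c)}\frac{-\zeta'}{\zeta}(s+s_0)\,\widetilde{W}(s)\,z^{s}\,\mathrm{d}s,
\]
where $\widetilde{W}(s)=\int_0^\infty W(x)x^{s-1}\,\mathrm dx$ is the Mellin transform of $W$. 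Using the derivative bound \eqref{derivbound}, repeated integration by parts gives $\widetilde W(s)=\frac{1}{s}+O_m\!\big((1+|s|)^{-m}\kappa^{-m+1}\big)$ for any $m\ge 1$ (for $\Re(s)$ bounded), so $\widetilde W$ decays faster than any power of $|s|$ once $|s|\gg 1/\kappa$; the condition $\kappa\ge z^{-\varepsilon\eta}$ is exactly what ensures that the tails beyond height $\asymp z^{\varepsilon\eta}$ (say) are negligible, even after the contour shift.

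Next I would shift the contour to $\Re(s)=r-\sigma_0$ with $r=1-(1-\varepsilon/2)\eta$, exactly as in Lemma~\ref{shifting}, picking up the residue $-\zeta'/\zeta(s_0)$ at $s=0$ and, in the case $|t_0|$ small, also the residue $z^{1-s_0}\widetilde W(1-s_0)=\frac{z^{1-s_0}}{1-s_0}+O(\cdots)$ from the pole of $\zeta'/\zeta$ at $s+s_0=1$; either way the main contribution is $\le \frac{1}{|s_0-1|}+\frac{z^{1-\sigma_0}+1}{|t_0|}$, matching the shape of the claimed bound after applying Lemma~\ref{improved} to $-\zeta'/\zeta(s_0)$ (note the hypothesised zero-free region of height $10z^{3\varepsilon\eta}$ comfortably contains the $(t_0-2,t_0+2)$ window needed there). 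On the shifted vertical line and on the two horizontal connectors, I would bound $-\zeta'/\zeta$ using Lemma~\ref{improved} — here the zero-free region of height $10z^{3\varepsilon\eta}$ about $t_0$ is what licenses the bound $O_\varepsilon((\log(|t_0|+z^{3\varepsilon\eta}+2))^b+1/\eta)$ uniformly along the relevant stretch — and bound the kernel by $|\widetilde W(s)|\,|z^{s}|\ll \min\{1/|s|,\,\kappa^{-m+1}|s|^{-m}\}\cdot z^{r-\sigma_0}$, which is $\ll z^{-\varepsilon\eta/2}\min\{1/|s|,\,\kappa^{-m+1}|s|^{-m}\}$. Integrating this against $|\mathrm ds|$ over the vertical line: near $s=0$ the $1/|s|$ behaviour is cut off at scale $\eta$ as in Lemma~\ref{shifting} (since on the shifted line $\Re(s)=r-\sigma_0$ has absolute value $\gg\varepsilon\eta$), contributing a $\log(1/\eta)$; the range $\eta\le|s|\le 1/\kappa$ contributes another $\log(1/\kappa)$; and the range $|s|\ge 1/\kappa$ is absorbed by taking $m=2$ in the rapid-decay estimate, contributing $O(1)$. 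Collecting, the vertical integral is $\ll_\varepsilon z^{-\varepsilon\eta/2}\big((\log(|t_0|+z^{3\varepsilon\eta}+2))^b+1/\eta\big)\log(1/(\kappa\eta))$, and the horizontal connectors (at height $\asymp z^{3\varepsilon\eta}$, where $|\widetilde W(s)|$ is tiny by rapid decay) give $O_\varepsilon(z^{-\eta}\log z)$ or better. Adding the residue terms and the $O_\varepsilon((\log(|t_0|+2))^b)$ from Lemma~\ref{improved} yields exactly the asserted estimate.

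The main obstacle I anticipate is bookkeeping the interplay of the two scales $1/\kappa$ and $z^{3\varepsilon\eta}$ in the kernel: one must verify that the window of the assumed zero-free region (height $\asymp z^{3\varepsilon\eta}$) is simultaneously (i) wide enough that pushing the horizontal connectors out to that height makes their contribution negligible via the rapid decay of $\widetilde W$ — this needs $z^{3\varepsilon\eta}$ to dominate $1/\kappa$, i.e.\ it uses $\kappa\ge z^{-\varepsilon\eta}$ with room to spare after the shift — and (ii) still narrow enough that Lemma~\ref{improved} applies with the single parameter $\eta$ throughout. A secondary technical point is justifying the initial Mellin identity and the contour shift rigorously (absolute convergence, no contribution from the far ends), but since $\widetilde W$ decays faster than any polynomial this is routine. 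Everything else parallels Lemma~\ref{shifting} line by line, with $z^s/s$ replaced by $\widetilde W(s)z^s$ and the zero-free height $z^\eta$ replaced by $z^{3\varepsilon\eta}$.
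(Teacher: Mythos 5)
Your plan mirrors the paper's argument closely: Mellin transform of $W$ and rapid decay from \eqref{derivbound}, truncating and shifting to $\Re(s)=1-(1-\varepsilon/2)\eta$, collecting the residues at $s=0$ and $s=1-s_0$, applying Lemma \ref{improved} throughout, and estimating the vertical integral over the regimes $|t|\le\eta$, $\eta\le|t|\le 1/\kappa$, $|t|\ge 1/\kappa$ using $|r-\sigma_0|\ge\varepsilon\eta/2$. The one place you are too casual is in declaring the truncation of the Mellin integral ``routine'' because $\widetilde W$ decays rapidly. Since the hypothesised zero-free window only has height $10z^{3\varepsilon\eta}$, you must truncate at some $T=O(z^{3\varepsilon\eta})$ \emph{before} shifting, and the truncation error is $\ll_m (\log z)\,z^{1-\sigma_0}/(\kappa T)^{m-1}$. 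With $z^{1-\sigma_0}$ possibly as large as $z^{(1-\varepsilon)\eta}$ and $\kappa$ as small as $z^{-\varepsilon\eta}$, making this small while keeping $T\le O(z^{3\varepsilon\eta})$ forces $m\gg 1/\varepsilon$; a fixed $m$ such as the $m=2$ you invoke for the tail of the vertical integral would require $T$ to be pushed well past $z^{3\varepsilon\eta}$, outside the assumed zero-free region. The paper resolves this by taking $m\in[1+1/\varepsilon,2+1/\varepsilon)$ and $T\asymp\kappa^{-1}z^{2\eta/(m-1)}$, which gives both $T\le 4z^{3\varepsilon\eta}$ (this is exactly where $\kappa\ge z^{-\varepsilon\eta}$ and $2\eta/(m-1)\le 2\varepsilon\eta$ enter) and a truncation error $O_\varepsilon((\log z)/z^{\eta})$ — which is the third error term in the statement, not the horizontal contribution as you attribute it. With this $\varepsilon$-dependent choice of $m$ made explicit, your outline produces the claimed bound.
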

\begin{proof}
For \(s=\sigma+it\in \mathbb{C},\) the Mellin transform is given by \[\widetilde{W}(s)=\int_0^\infty W(x)x^{s-1}~\mathrm{d} x.\]
Suppose \(\sigma\geq 1/10,\)  say. The integral is  absolutely convergent here, so we may integrate by parts \(m\geq 1\) times to obtain
\begin{equation}\label{intparts0}\widetilde{W}(s)=(-1)^m\int_1^{1+\kappa} W^{(m)}(x)\frac{x^{s+m-1}}{s(s+1)\dots(s+m-1)}~\mathrm{d} x. \end{equation} 
This integral is absolutely convergent for all \(s\in \mathbb{C}\setminus\{0,-1,\dots,-m+1\},\) so \eqref{intparts0} defines a meromorphic continuation of \(\widetilde{W}(s).\) Moreover, using \eqref{derivbound} we have \begin{equation}\label{intparts}\widetilde{W}(s)\ll \frac{1+(1+\kappa)^{\sigma+m-1}}{|t|^m}\int_1^{1+\kappa}|W^{(m)}(x)|~\mathrm{d}x\ll_m \frac{1}{\kappa ^{m-1} |t|^m}\end{equation}
 whenever  \(-\infty<\sigma\leq 10\)  (say, so that the bound does not depend on \(\sigma\))
and \(t\neq 0.\)
By the Mellin inversion formula, for any \(\sigma>0,\) we have \[W(x)=\frac{1}{2\pi i}\int_{\sigma-i\infty}^{\sigma+i\infty}\widetilde{W}(s)x^{-s}~ \mathrm{d}s.\]
Then, with \(\sigma=1-\sigma_0+1/\log z,\) and \(m\geq 2,\) we have \begin{align*}\sum_{n\geq 1} \frac{\Lambda(n)}{n^{s_0}}W\left(\frac{n}{z}\right) &=\frac{1}{2\pi i}\int_{\sigma-i\infty}^{\sigma+i\infty}\frac{-\zeta'}{\zeta}(s+s_0)\widetilde{W}(s)z^{s}~ \mathrm{d}s \\ &=\frac{1}{2\pi i}\int_{\sigma-iT}^{\sigma+iT}\frac{-\zeta'}{\zeta}(s+s_0)\widetilde{W}(s)z^{s}~ \mathrm{d}s+O_m\left(\int_{T}^\infty \frac{(\log z)z^{1-\sigma_0}}{\kappa^{m-1}t^{m}} ~\mathrm{d}t \right).\end{align*}
A change of variables in the main term, and estimating the error term, gives
\[\frac{1}{2 \pi i}\int_{1+1/\log z+it_0-iT}^{1+1/\log z+it_0+iT}\frac{-\zeta'}{\zeta}(s)\widetilde{W}(s-s_0)z^{s-s_0}~ \mathrm{d}s+O_m\left(\frac{(\log z)z^{1-\sigma_0}}{(\kappa T)^{m-1}}\right).\]  Choose \(m\in[1+1/\varepsilon,2+1/\varepsilon)\) and  \(T\geq \kappa^{-1} z^{2\eta/(m-1)},\)  so the error term is  \(O_\varepsilon((\log z)/z^\eta).\) 
 
 We shift the contour left to \(\Re(s)=r=1-(1-\varepsilon/2)\eta.\) As in the proof of Lemma \ref{shifting}, we separately consider the cases where \(|t_0|\geq 3\kappa^{-1}z^{2\eta/(m-1)}\) (set \(T=2\kappa^{-1} z^{2\eta/(m-1)}\)) and \(|t_0|< 3\kappa^{-1}z^{2\eta/(m-1)}\) (set \(T=4\kappa^{-1} z^{2\eta/(m-1)}\)). In either case, 
 \( T\leq 4z^{\varepsilon\eta+2\eta/(m-1)}\leq 4   z^{3\varepsilon\eta}.\)   The above integral becomes  
 \begin{align*}\frac{-\zeta'}{\zeta}&(s_0)+z^{1-s_0}\widetilde{W}(1-s_0)1_{|t_0|<3\kappa^{-1}z^{2\eta/(m-1)}} \\
 +&\frac{1}{2\pi i}\left(\int_{r+it_0-iT}^{r+it_0+iT}+\int_{1+1/\log z+it_0-iT}^{r+it_0-iT}+\int_{r+it_0+iT}^{1+1/\log z+it_0+iT}\right)\frac{-\zeta'}{\zeta}(s)\widetilde{W}(s-s_0)z^{s-s_0}~\textrm{d}s.\end{align*}
 By Lemma \ref{improved}, we have \(|-\zeta'(s_0)/\zeta(s_0)|\leq 1/|t_0|+O_\varepsilon((\log(|t_0|+2))^b).\) From \eqref{intparts}, this time with \(m=1,\) we also have \(|z^{1-s_0}\widetilde{W}(1-s_0)|\ll z^{1-\sigma_0}/|t_0|.\) 
 
  We can bound  the upper horizontal integral by
\[\ll \max_{\sigma\in[r,1+1/\log z]}\left|\frac{-\zeta'}{\zeta}(\sigma+it_0+ iT)\right|\frac{z^{1-\sigma_0}}{\kappa^{m-1}T^m}\ll_\varepsilon \frac{(\log(|t_0|+z^{3\varepsilon\eta}+2))^b}{z^\eta}.\]  The lower horizontal integral is bounded similarly. The vertical integral is 
  \begin{align*} &\ll z^{r-\sigma_0} \max_{t\in[-T,T]}\left|\frac{-\zeta'}{\zeta}(r+it_0+ it)\right|\int_{-T}^T\left|\widetilde{W}(r-\sigma_0+it)\right| ~\mathrm{d}t\\
&\ll_\varepsilon\frac{(\log(|t_0|+T+2))^b+ 1/\eta }{z^{\varepsilon\eta/2}}\left(\int_{1/\kappa}^{T} \frac{1}{\kappa^{m-1}t^m}~\textrm{d}t+\int_\eta^{1/\kappa}\frac{1}{t}~\textrm{d}t+\int_{-\eta}^{\eta} \frac{1}{\eta}~\textrm{d}t\right)\\
&\ll \frac{(\log(|t_0|+z^\eta+2))^b+ 1/\eta }{z^{\varepsilon\eta/2} }\left(1+\log (1/(\kappa\eta))+1\right),
\end{align*}
where we have used Lemma \ref{improved}, the fact that \(|r-\sigma_0+it|\geq \max\{r-\sigma_0,|t|\} \geq \max\{\varepsilon\eta/2,|t|\},\) and the bound \eqref{intparts}.\end{proof}
 
 Let \(\kappa=y^{-\varepsilon\lambda}\leq x^{-\varepsilon^2\lambda}.\) We  replace \(y^\delta\) with \(y^{3\varepsilon\delta}\) in the definition \(\Gamma_\rho,\) meaning
 \[\widetilde{\Gamma_\rho'}(t)=1-(1-\varepsilon)\nu+i(\gamma+10t), \quad \quad t\in[-y^{3\varepsilon\nu},y^{3\varepsilon\nu}] ,\] and  \[\widetilde{\Gamma_\rho^\pm}(\delta)=1-(1-\varepsilon)\delta+i(\gamma\pm 10y^{3\varepsilon\delta}), \quad \quad \delta\in\left[\nu, \frac{1}{2}\right] \] produces a new path \(\widetilde{\Gamma_\rho}=\widetilde{\Gamma_\rho'}\cup\widetilde{\Gamma_\rho^+}\cup\widetilde{\Gamma_\rho^-},\) from which we obtain a new contour \(\widetilde{\Gamma}\) instead of \(\Gamma.\) 
 We use Lemma \ref{smoothedshifting} instead of Lemma \ref{shifting} to deduce a saving on \(\widetilde{\Gamma}.\)
 We  then proceed the same way as the unsmoothed count with virtually the same estimates, though with \(\zeta_W(\alpha,y)\) in place of \(\zeta(\alpha,y)\)  (note that the proof of the version of Lemma \ref{contzeros} we need carries through since \(|\widetilde{\Gamma_\rho}|\ll y^{3\varepsilon/2}\leq x^{3\varepsilon/2}\)). This is permissible for error terms because we claim that \(|\zeta_W(\alpha+it,y)|\ll |\zeta(\alpha+it,y)|.\) This follows from the bound
\[\log \left|\frac{\zeta_W(\alpha+it,y)}{\zeta(\alpha+it,y)}\right|\leq-\sum_{y<p\leq y(1+\kappa)}\log\left(1-\Re\frac{W(p/y)}{p^{\alpha+it}}\right)\ll \sum_{y<p\leq y(1+\kappa)}\frac{1}{p^\alpha}\ll \kappa y^{1-\alpha}\leq 1,\] where the last inequality uses \(1-\alpha\leq \varepsilon\lambda\) by \eqref{alphalambda}. 
  We then let \[f_\theta(x,y)=\frac{1}{2\pi i}\frac{1}{x}\int_{\Gamma_\infty\cap \Gamma_0}\zeta_W(s,y)x^s ~\mathrm{d}s,\] and as before we are able to conclude  the proof of the statement in Theorem \ref{mainfunc}, provided \(y\leq x^{1/2-\varepsilon}\) (where this restriction comes from the bound over \(\Gamma_0^\pm\) in Lemma \ref{gamma0}). 
  
  If, however, we have \(x^{1/2-\varepsilon}<y\leq x^{1-2\varepsilon},\) then we use (a suitable modification of) Lemma \ref{gamma02} in place of Lemma \ref{gamma0}, leading to an additional error term of size \[\ll_\varepsilon\left(\frac{h}{x}\right)^{1+\varepsilon/2}\Psi(x,y)(\log y)e^{-c_3u}.\] 
  When \(h\leq x^{0.99},\) say,  this error is of no significance. To address when this is not the case, we proceed via an entirely different approach. 
 
    \section{Large \(y\) via counting primes}\label{largey}
  Let us handle the scenario when 
 \[x/y^{13/30-\delta}\leq h\leq x  \quad \mathrm{ and } \quad  x^{\delta'}\leq y\leq 2x\]  
    via a more direct approach, where \(\delta>0\) is small and \(0<\delta'<1\),  for \(x\)  sufficiently large.   In particular, setting \(\delta'=0.49\) and \(\delta=\varepsilon\) allows us to prove Theorem \ref{mainfunc} when  \[x^{17/30+10\varepsilon}\leq h\leq x  \quad \mathrm{ and } \quad  x^{1-2\varepsilon}\leq y\leq 2x,\] or when
      \[ x^{0.99}\leq h\leq x  \quad \mathrm{ and } \quad x^{1/2-\varepsilon}\leq y\leq x^{1-2\varepsilon},\]
which we were unable to deal with by the saddle point method.    
  
 Our initial manoeuvre is similar to that of Hildebrand \cite[\S 5]{hild2}. Writing \(p_1,\dots, p_k\) for pairwise distinct prime variables, from the inclusion-exclusion principle we have
 \begin{align}\label{primesum}
\Psi(x+h,y)-\Psi(x,y)&=\sum_{x< n\leq x+h}1+\sum_{1\leq k\leq u+1}(-1)^k\sum_{ \substack{p_1,\dots, p_k>y,~m\geq 1: \\x<p_1\dots p_k m\leq x+h}}1 \nonumber 
\\&=h+O(1) + \sum_{1\leq k\leq u+1}(-1)^k\sum_{p_1,\dots, p_{k-1}>y,~m\geq 1}\sum_{\substack{p_k>y:\\x<p_1\dots p_km\leq x+h}}1 
 \end{align}
 The restriction on \(k\) is present because the first inner sum is empty when  \(y^k>2x.\)  By introducing a logarithmic weight, we estimate the final inner sum  with a small error term.
\begin{lemma}[Primes in short intervals] \label{primesshortlemma} Let \(\delta>0\) be small. There exists a small constant \(c=c(\delta)>0\) such that for all \(x',h'\geq 10\) satisfying \((x')^{17/30+\delta}\leq h'\leq x',\) we have 
\[\sum_{x'<p\leq x'+h'}1=\frac{h'}{\log x'}\left(1+O_\delta\left(e^{-c(\log x')^{1/3}/(\log \log x')^{1/3}}\right)+O\left(\frac{h'}{x'\log x'}\right)\right).\]
\end{lemma}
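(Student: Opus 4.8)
The plan is to prove the prime counting estimate by a weighted explicit-formula argument, mirroring the structure of the smooth-number part of the paper. First I would pass from the sharp-cutoff count $\sum_{x'<p\le x'+h'}1$ to a logarithmically weighted sum $\sum_{x'<n\le x'+h'}\Lambda(n)$; partial summation converts the former into the latter at the cost of an error of relative size $O(1/\log x')$ together with the standard negligible contribution of prime powers $p^k$ with $k\ge 2$ (which is $O(\sqrt{x'})$, dwarfed by $h'/\log x'$ since $h'\ge (x')^{17/30+\delta}$). To control $\psi(x'+h')-\psi(x')=\sum_{x'<n\le x'+h'}\Lambda(n)$ I would further smooth the sharp interval by a weight supported on $[x', x'+h'(1+\kappa)]$ or similar — exactly the device already used in Section~\ref{smoothing} — so that the Mellin transform of the weight decays rapidly and the tails of the ensuing contour integral are harmless; the weighted and unweighted $\psi$ differ by $O(\kappa h' + (\text{log factors}))$, which one folds into the stated error by choosing $\kappa$ a suitable small power of $x'$.

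Next I would apply Perron/Mellin inversion to write the (weighted) $\psi$-difference as $\frac{1}{2\pi i}\int \bigl(-\zeta'/\zeta\bigr)(s)\,\widetilde{W}(s)\,\bigl((x'+h')^s - (x')^s\bigr)\,\mathrm{d}s$ along a vertical line to the right of $1$, pick up the residue at $s=1$ which yields the main term $\sim h'$ (expanding $(x'+h')^s - (x')^s = x'^s((1+h'/x')^s-1)$ exactly as in~\eqref{taylor0} to get $h'(1+O(h'/x'))$), and then shift the contour leftward. The shift is made onto a Vinogradov--Korobov--shaped contour that hugs the zero-free region near the real axis and bends left along an exponential curve, avoiding zeros, precisely analogous to the contour $\Gamma$ of Section~\ref{contour}; on the portions far from the real axis the bound $|\zeta'/\zeta(s)|\le 1/|s-1| + O((\log|t|)^b)$ from Lemma~\ref{improved} keeps the integrand tame, while the savings come from $x'^{\Re(s)-1}$ being small on the shifted contour. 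The zeros crossed contribute $\ll \sum_\rho x'^{-(1-\varepsilon)\beta'}$-type sums where $\beta'=1-\Re\rho$, and here I would invoke the Guth--Maynard log-free zero-density estimate~\eqref{zerodensity}: dyadically decomposing in $\beta'$ and in height $|\gamma|\le H'$ with $H'\asymp x'/h'$ (forced by the Perron error $O(x'\log x'/H')$), the constraint $(x'/h')^{30/13}\le x'^{1-\text{something}}$ is exactly what $h'\ge(x')^{17/30+\delta}$ guarantees, producing a total zero contribution bounded by $(h'/\log x')\,e^{-c(\log x')^{1/3}/(\log\log x')^{1/3}}$.

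The main obstacle, as in the rest of the paper, is the interplay between the width of the Vinogradov--Korobov zero-free region and the exponent $30/13$ in the zero-density estimate: one must choose the contour's leftmost reach and the dyadic ranges so that the density bound, the zero-free-region width $\lambda\asymp (\log x')^{-2/3}(\log\log x')^{-1/3}$, and the length $H'\asymp x'/h'$ of the integral all balance to give the claimed error $e^{-c(\log x')^{1/3}/(\log\log x')^{1/3}}$ rather than something weaker. Concretely, the zeros nearest the $1$-line (with $\beta'\lesssim\lambda$) give a contribution of size roughly $x'^{-\varepsilon\lambda}=e^{-c(\log x')^{1/3}/(\log\log x')^{1/3}}$ after summing the $O(\log x')$ dyadic blocks, and one must verify that no block is larger; the horizontal connecting segments at height $\pm H'$ and the far-left vertical segment at $\Re(s)=\tfrac12+\varepsilon$ contribute $\ll x'^{1/2+\varepsilon}H'$, which is acceptable for $h'$ not too small. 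Everything else — the smoothing error, the prime-power error, the residue computation, the $O(h'/(x'\log x'))$ term coming from the quadratic term in the expansion of $(1+h'/x')^s$ — is routine bookkeeping of the kind already carried out in Sections~\ref{contour} and~\ref{smoothing}, so I would state those steps briefly and refer back to the analogous lemmas.
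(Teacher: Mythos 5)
Your route differs substantially from the paper's, and two of your choices would cost you. The paper's proof does \emph{not} smooth the interval and does \emph{not} build a custom Hooley--Huxley contour for $-\zeta'/\zeta$; it simply quotes the standard truncated explicit formula $\sum_{n\le x'}\Lambda(n)=x'-\sum_{|\Im\rho|\le T}(x')^\rho/\rho+O(x'(\log x')^2/T)$, takes the difference at $x'$ and $x'+h'$, Taylor-expands $((x'+h')^\rho-(x')^\rho)/\rho=O(h'(x')^{\Re\rho-1})$ exactly as in \eqref{taylor0}, and then bounds $\sum_{|\Im\rho|\le T}(x')^{\Re\rho-1}$ by the dyadic argument of \eqref{sumzeros}. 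The single clean choice $(x'/h')(x')^{\delta/2}\le T\le (x')^{13/30-\delta/2}$, which is permissible precisely when $h'\ge (x')^{17/30+\delta}$, makes both the Perron-truncation error and the zero-density input work out at once. Re-deriving this via a Vinogradov--Korobov-shaped contour with a smooth cutoff, as you suggest, is redoing (less cleanly) what the explicit formula already encapsulates, and it creates friction you would have to resolve: the integrand $-\zeta'/\zeta$ actually has \emph{poles} at the zeros, so a contour that ``narrowly avoids zeros'' does not make them harmless in the same way it does for the pole-free $\zeta(s,y)$; you would need to control $|\zeta'/\zeta|$ on the near-zero arcs via Lemma~\ref{improved}, which imposes local zero-free-region hypotheses that must be checked to hold on the chosen contour. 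And the smoothing parameter $\kappa$ would have to be as small as $e^{-c(\log x')^{1/3}/(\log\log x')^{1/3}}$ to fold the smoothing error $O(\kappa h')$ into the claimed bound, which in turn pushes the effective truncation height above $(x')^{13/30}$ and threatens the $T\le (x')^{13/30-\delta/2}$ constraint you need for zero density.

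There is also a concrete gap in your first step. You say partial summation converts $\sum_{x'<p\le x'+h'}1$ into $\sum_{x'<p\le x'+h'}\log p$ ``at the cost of an error of relative size $O(1/\log x')$''. That is too crude: the lemma asserts the much smaller relative error $O(h'/(x'\log x'))$, which can be far smaller than $1/\log x'$ when $h'\ll x'$. The paper obtains this because for every $p\in(x',x'+h']$ one has $\log p/\log x'=1+O(h'/(x'\log x'))$ \emph{pointwise}, and combined with the sieve bound $\sum_{x'<p\le x'+h'}\log p\ll h'$ this yields an absolute error $O((h')^2/(x'(\log x')^2))$, i.e.\ relative error $O(h'/(x'\log x'))$. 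Stating the conversion cost as $O(1/\log x')$ would prove a strictly weaker statement than Lemma~\ref{primesshortlemma}, so this step needs the sharper observation to go through.
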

\begin{proof}
 As \(2\leq h'\leq x'<p\leq x'+h',\) we have \[1<\frac{\log p}{\log x'}\leq \frac{\log (x'+h')}{\log x'}\leq 1+ O\left(\frac{h'}{x'\log x'}\right).\] The  simple sieve bound \eqref{sieve} then yields
  \[\sum_{x'<p\leq x'+h'}1=\frac{1}{\log x'}\sum_{x'<p\leq x'+h'}\log p +O\left(\frac{(h')^2}{x'(\log x')^2}\right).\] 
  We also have\begin{equation}\label{logp}\begin{split}
  0\leq \sum_{x'<n\leq x'+h'}\Lambda(n)-\sum_{x'<p\leq x'+h'}\log p &\leq \sum_{2\leq k\leq \log (2x')/\log 2}~\sum_{\substack{p:\\ x'<p^k\leq x'+h'}}\log (2x')\\&\ll (\log x')^2\sqrt{x'}.\end{split}\end{equation}
  
  We have the explicit formula (see e.g~\cite[Thm.~2.5]{montvaug}) for \(2\leq T\leq x'\) given by
  \[\sum_{n\leq x'}\Lambda(n)=x'-\sum_{|\Im (\rho)| \leq T}\frac{(x')^\rho}{\rho}+O\left(\frac{x'(\log x')^2}{T}\right),\] where the sum is over zeros \(\rho\) of \(\zeta(s)\) with \(\Re(\rho)\geq 0.\) 
  Then by Taylor's theorem (see \eqref{taylor0}) 
  \[\sum_{x'<n\leq x'+h'}\Lambda(n)=h'+O\left(h'\sum_{|\Im (\rho)| \leq T}{(x')^{\Re(\rho)-1}}\right)+O\left(\frac{x'(\log x')^2}{T}\right).\]
Choose \((x'/h')(x')^{\delta/2} \leq T\leq (x')^{13/30-\delta/2}.\) This is always possible provided \(h'\geq (x')^{17/30+\delta}.\)    For a suitably small constant \(c=c(\delta)>0,\) we estimate the sum, as we did in \eqref{sumzeros}, by \[\sum_{x'<n\leq x'+h'}\Lambda(n)=h'+O_\delta\left(h'e^{-c(\log x')^{1/3}/(\log \log x')^{1/3}}\right)+O\left(h'\frac{(\log x')^2}{(x')^{\delta/2}}\right),\]  and the latter error term may be absorbed into the former, as can the error term \eqref{logp}. Combining these estimates delivers the desired conclusion. 
\end{proof}

Now consider the innermost sum in \eqref{primesum}, which is of the form   
\begin{equation}\label{primesum2} \sum_{\max\{y,x/r\}<p\leq (x+h)/r}1,\end{equation} with \(r\) a positive integer.
  Writing \(x'=x/r\) and \(h'=h/r,\) we see that
  \(h'\geq (x')^{17/30+\delta}\) if and only if \(h\geq x^{17/30+\delta} r^{13/30-\delta}.\)
  If \(r\leq x/y,\) then we indeed have  
  \[x^{17/30+\delta}r^{13/30-\delta}\leq x^{17/30+\delta}(x/y)^{13/30-\delta}= x/y^{13/30-\delta}\leq h.\]
    Thus, we may apply our estimate  for primes in short intervals in Lemma \ref{primesshortlemma} to the triple sum in \eqref{primesum} when \(p_1\dots p_{k-1}m\leq x/y,\) leading to a main term from the triple sum of
\[h\sum_{1\leq k\leq u}(-1)^k\sum_{\substack{p_1,\dots,p_{k-1}>y,~ m\geq 1:\\ p_1\dots p_{k-1}m\leq x/y}} \frac{1}{p_1\dots p_{k-1}m\log (x/(p_1\dots p_{k-1}m))}\] (we may ignore when \(u<k\leq u+1\) as the inner sum would be empty), and an error of 
\[\ll_\delta\sum_{1\leq k\leq u+1}~\sum_{\substack{p_1,\dots,p_{k-1}>y,~ m\geq 1:\\ p_1\dots p_{k-1}m\leq x/y}}\frac{h}{p_1\dots p_{k-1}m\log y}\left(e^{-c(\log y)^{1/3}/(\log \log y)^{1/3}}+\frac{h}{x\log y}\right).\] 
Each positive integer \(r\leq x/y\) can have at most \(O_{\delta'}(1)\) prime factors greater than \(y.\) Thus \(r\) can be represented in the form \(p_1\dots p_{k-1}m,\) such that  \(p_1,\dots ,p_{k-1}>y,\) in at most \(O_{\delta'}(1)\) ways. It follows that \begin{equation}\label{rbound}\sum_{\substack{p_1,\dots,p_{k-1}>y,~ m\geq 1:\\ p_1\dots p_{k-1}m\leq x/y}}\frac{1}{p_1\dots p_{k-1}m}\ll_{\delta'} \sum_{r\leq x/y}\frac{1}{r}\ll \log x.\end{equation} The above error term is therefore  
\[\ll_{\delta'} h \left(e^{-c(\log y)^{1/3}/(\log \log y)^{1/3}}+\frac{h}{x\log y}\right).\]
    
 The contribution to the sum in \eqref{primesum2}  from the remaining range \(r>x/y\) is 
 \[\sum_{y<p\leq (x+h)/r}1\leq \sum_{x/r<p\leq (x+h)/r}1\ll \frac{h}{r\log x}< \frac{y}{x}\frac{h}{\log x}\]
 by the sieve bound \eqref{sieve}. 
Using this, along with similar reasoning to that establishing \eqref{rbound}, the contribution to the triple sum in \eqref{primesum} when \(y<p_1\dots p_{k-1}m\leq (x+h)/y\)  is
 \[ \ll\sum_{1\leq k\leq u+1}\sum_{\substack{p_1,\dots,p_{k-1}>y,~ m\geq 1:\\x/y< p_1\dots p_{k-1}m\leq (x+h)/y}} \sum_{y<p\leq (x+h)/(p_1\dots p_{k-1}m)}1\ll_{\delta'}\left(\frac{h}{y}+1\right) \frac{y}{x}\frac{h}{\log x}=\frac{h\left(h+y\right)}{x\log x}.\]

Writing \[g(x,y)=1+\sum_{1\leq k\leq u}(-1)^k\sum_{\substack{p_1,\dots,p_{k-1}>y,~ m\geq 1:\\ p_1\dots p_{k-1}m\leq x/y}} \frac{1}{p_1\dots p_{k-1}m\log (x/(p_1\dots p_{k-1}m))},\]  we therefore obtain
 \[\Psi(x+h,y)-\Psi(x,y)=h\left(g(x,y)+O_{\delta',\delta}\left(e^{-c\delta'(\log x)^{1/3}/(\log \log x)^{1/3}}+\frac{h+y}{x\log x}\right)\right),\] 
  which is of the shape claimed in Theorem \ref{mainfunc}  once we observe (see e.g.~ \cite{hild2}) that \(\Psi(x,y)\gg_{\delta'} x\) in our range of \(y.\) 
  
 \section{Proof of Theorems 1.1 and 1.2}\label{proofthms}
Let \(\varepsilon>0\) be small, and assume \(\theta\geq 17/30+10\varepsilon.\)  Suppose \(h\) and \(y\) satisfy \eqref{yandh}, with \(x\) sufficiently large in terms of \(\varepsilon.\) If \(y\leq x^{1/2-\varepsilon},\) then the statement in Theorem \ref{mainfunc} holds by the work of Sections \ref{contour} and \ref{smoothing}. On the other hand, when  \(y\geq x^{1-2\varepsilon},\) the statement in Theorem \ref{mainfunc} holds by the work of Section \ref{largey}. 
 
  Let us consider the  range \(x^{1/2-\varepsilon}< y< x^{1-2\varepsilon}\) (and so \(\Psi(x,y)\gg x\)).
The results of Sections \ref{contour} and \ref{smoothing} imply, in particular, that if \(x^{\theta}\leq h\leq x^{0.99}\) then
\[\frac{\Psi(x+h,y)-\Psi(x,y)}{h}=f_\theta(x,y)+O_\varepsilon\left(e^{-c((\log x)/\log \log x)^{1/3}}\right).\]  If instead  \(h\geq x^{0.99},\)  then from Section \ref{largey}  we have   \[\frac{\Psi(x+h,y)-\Psi(x,y)}{h}=g(x,y)+O_\varepsilon\left(e^{-c((\log x)/\log \log x)^{1/3}}+\frac{h+y}{x\log x}\right).\]
On setting \(h= x^{0.99},\) we see that \[f_\theta(x,y)=g(x,y)+O_\varepsilon\left(e^{-c((\log x)/\log \log x)^{1/3}}\right).\] This establishes the desired result in the range \(x^{1/2-\varepsilon}< y< x^{1-2\varepsilon},\) and hence  concludes the proof of Theorem \ref{mainfunc}.

 To deduce Theorem \ref{main}, we set \(h=x\) in Theorem \ref{mainfunc}, so that 
\[\frac{\Psi(2x,y)-\Psi(x,y)}{x}=f_\theta(x,y)+O\left(\frac{\Psi(x,y)}{x}\frac{\log (u+1)}{\log y}\right)\] for all \(y\) satisfying \eqref{yandh}. 
We now appeal to the short interval result of Hildebrand \cite{hild2} \[\frac{\Psi(2x,y)-\Psi(x,y)}{x}=\frac{\Psi(x,y)}{x}+O\left(\frac{\Psi(x,y)}{x}\frac{\log(u+1)}{\log y}\right)\] for \(e^{(\log \log x)^{5/3+1/100}}\leq y\leq x.\) This estimate  also holds if \(y>x\) by the prime number theorem, as we are simply estimating the density of composite numbers in the interval \((x,2x].\)  This concludes the proof of Theorem \ref{main}.
  \section{Assuming the Riemann Hypothesis}\label{riemannsection}
  In this section we assume the Riemann Hypothesis. \subsection{Saddle point method} Suppose  \(\theta\geq 1/2+10\varepsilon\) for a suitably small \(\varepsilon>0.\) Let us further suppose that \((\log x)^K\leq y\leq x^{1-2\varepsilon},\) with \(K\geq 10/\varepsilon^2,\) and that \(x\) sufficiently large in terms of \(\varepsilon.\) Observe that on this range \(\alpha\geq 1-1/K+o(1)>(1+\varepsilon)/2.\) It is also known by \eqref{densityinu}  that  \(\Psi(x,(\log x)^K)=x^{1-1/K+o(1)}.\) 
  
 Recall from Perron's formula \eqref{perron} that if \(y,h,H\in [2,x]\) then
\[\Psi(x+h,y)-\Psi(x,y)=\frac{1}{2\pi i}\int_{\alpha-iH}^{\alpha+iH}\zeta(s,y)\frac{(x+h)^s-x^s}{s} ~\mathrm{d}s+O\left(\frac{x\log x}{H}\right).\] We set \(H=x^{1+2/(\varepsilon
K)}/h,\) so that the above error term becomes \(\ll h(\log x)/x^{2/(\varepsilon
K)}\ll h\Psi(x,y)/x^{1+1/(\varepsilon
K)}.\)

We then perform a shift of the integral to the left as before. 
We may ignore the contribution from the \(\Gamma_\rho\) since these all already lie on the vertical `remainder' line \(\Re (s)= (1+\varepsilon)/2.\) Thus we have
\[\frac{1}{2\pi i}\left(\int_{\Gamma\cap\Gamma_0}+\int_{\Gamma\setminus\Gamma_0}+\int_{\substack{\sigma \pm iH\\ \Gamma(\pm H)\leq \sigma\leq \alpha}}\right)\zeta(s,y)\frac{(x+h)^s-x^s}{s} ~\mathrm{d}s.\]
We set \(\eta=1/2\) and apply a version of Lemma \ref{powersave} for our range of \(y.\) We see from the proof that this is possible as \(y^{\varepsilon\eta}=y^{\varepsilon/2} \)  beats any power of \(\log y.\) The `remainder' and horizontal integrals once again contribute
\[\ll\frac{h}{x}\Psi(x,y)\sqrt{\log x\log y}\frac{H}{x^{1/2-2\varepsilon}}+\frac{\Psi(x,y)}{H}.\] By assumption we have \(h\geq x^\theta,\)  and so \(H\leq x^{1+2/(\varepsilon K)-\theta}\). As \(\theta\geq 1/2+10\varepsilon,\) we have \(H\leq x^{1/2+2/(\varepsilon K)-10\varepsilon}\leq x^{1/2-8\varepsilon}.\)  Thus the above error contribution is \[\ll\frac{h}{x}\Psi(x,y)\sqrt{\log x\log y}\frac{1}{x^{6\varepsilon}}+\frac{h}{x}\Psi(x,y)\frac{1}{x^{2/(\varepsilon K)}}\ll\frac{h}{x}\Psi(x,y)\frac{1}{x^{1/K}}.\]
 
As before, we are left with  \begin{equation}\label{rheqn}\frac{1}{2\pi i}\frac{h}{x}\int_{\Gamma\cap \Gamma_0}\zeta(s,y)x^s~\mathrm{d}s +O\left(\int_{\Gamma\cap\Gamma_0}\min\left\{\frac{h^2}{x^2}|s-1|,\frac{h}{x}\right\}|\zeta(s,y)x^s|~|\mathrm{d}s|\right).\end{equation} Adapting the proof of Lemma \ref{largeheightmain},  we may replace the main term here with \(f_\theta(x,y)h,\) where 
\[f_\theta(x,y)=\frac{1}{2\pi i}\frac{1}{x}\int_{\Gamma_\infty\cap\Gamma_0}\zeta(s,y)x^s ~\mathrm{d}s,\] introducing  an error of 
\[\ll_\varepsilon\frac{h}{x}\frac{\Psi(x,y)}{x^{2\varepsilon/(2\varepsilon K)}}=\frac{h}{x}\Psi(x,y)\frac{1}{x^{1/K}}.\]
By Lemmas \ref{contribnearreal} and \ref{intheight}, along with suitable adaptations of Lemmas \ref{gamma0} and \ref{gamma02},  the error term in \eqref{rheqn} is \[\ll_\varepsilon\frac{h^2}{x^2}\Psi(x,y)\frac{\log(u+1)}{\log y}+\left(\frac{h}{x}\right)^{1+\varepsilon/2}\Psi(x,y)(\log y)e^{-c_3u}1_{x^{1/2-\varepsilon}< y\leq x^{1-2\varepsilon}}.\] 

Thus, when \((\log x)^K\leq  y\leq x^{1/2-\varepsilon},\) we have \[\Psi(x+h,y)-\Psi(x,y)=f_\theta(x,y) h+O_\varepsilon\left(\frac{h}{x}\Psi(x,y)\left(\frac{1}{x^{1/K}}+\frac{h\log(u+1)}{x\log y}\right)\right).\]  
Our use of smoothing plays no part here, as this was used only for the paths \(\Gamma_\rho,\) which we have no need to consider by assuming the Riemann Hypothesis. We postpone addressing the case \(x^{1/2-\varepsilon}<y\leq x^{1-2\varepsilon}\) until we have established additional results. 

 Our strategy in Section \ref{proofthms} for approximating \(f_\theta(x,y)\) by \(\Psi(x,y)/x\) relied on considering when \(h=x\) and applying the short interval result of Hildebrand. This approach remains valid when \(e^{(\log \log x)^{5/3+1/100}}\leq y\leq x^{1/2-\varepsilon},\)  leading to 
\begin{equation}\label{error1}\Psi(x+h,y)-\Psi(x,y)=\frac{h\Psi(x,y)}{x}\left(1+O_\varepsilon\left(\frac{\log(u+1)}{\log y}\right)\right).\end{equation} The absence of a factor of \(\alpha,\) as compared to the statement in Theorem \ref{mainriemann}, is of no concern, because \(\alpha=1-O(\log(u+1)/\log y)\)  by the approximate formula \eqref{alpha}.

We require a different approach in order to cover smaller \(y.\) 
From Lemmas \ref{gamma0} (after suitable modification for our range of \(y\)), \ref{contribnearreal}, and \ref{intheight}, we have
 \begin{align*}\frac{1}{2\pi i}\int_{\Gamma\cap \Gamma_0}\zeta(s,y)x^s~\mathrm{d}s =\alpha\Psi(x,y)&+O\left(\frac{\Psi(x,y)}{u}\right)+O\left(\frac{\Psi(x,y)\log y}{y}\right)\\&+O_\varepsilon\left(\frac{\Psi(x,y)}{e^{c_4u/(\log (u+1))^2}}\right)+O_\varepsilon\left(\frac{\Psi(x,y)}{e^{c_2u}}\right).\end{align*}
 When \((\log x)^K\leq  y\leq x^{1/2-\varepsilon},\)  all these error terms are \(\ll_\varepsilon \Psi(x,y)/u,\) and so
 \begin{equation}\label{error2}\Psi(x+h,y)-\Psi(x,y)=\alpha\frac{h\Psi(x,y)}{x}\left(1+O_\varepsilon\left(\frac{1}{u}+\frac{1}{x^{1/K}}+\frac{h\log(u+1)}{x\log y}\right)\right).\end{equation}
We combine \eqref{error1} and \eqref{error2} by taking the minimum of their error terms.

Now suppose that \((\log x)^{2+10\varepsilon}\leq y\leq (\log x)^{K}.\) Then \(\alpha\geq 1-1/(2+10\varepsilon)+o(1)>1/2+\varepsilon.\)  
At this level of smoothness, the small size of \(\Psi(x,y)\) presents an obstacle to obtaining an adequate  error term from Perron's formula. We therefore use a version of Perron's formula appearing in \cite[\S 3]{harp3} (see also \cite{fouvten}) which is sensitive to the size of the set of smooth numbers rather than the whole interval.
This gives \(\Psi(x+h,y)-\Psi(x,y)\) equal to  \[\frac{1}{2\pi i}\int_{\alpha-iH}^{\alpha+iH}\zeta(s,y)\frac{(x+h)^s-x^s}{s} ~\mathrm{d}s+O\left(\Psi(x,y)\left(\frac{\sqrt{\log x \log y}}{\sqrt{H}}+\frac{1}{H^{1/50}}\right)\right).\] 

As stated earlier, inspecting the proof of Lemma \ref{powersave} with \(\eta=1/2\) reveals that this result also holds our range of \(y,\) given that \(y^{\varepsilon\eta}=y^{\varepsilon/2}\) beats any power of  \(\log y.\) We shift the integral left, encountering  error terms on the `remainder' line (here we use the fact that \(\alpha\geq 1/2+\varepsilon\)) and the horizontal paths  of size
\[\ll \frac{h}{x}\Psi(x,y)\sqrt{\log x\log y}\frac{H}{x^{9\varepsilon/20}}+\frac{\Psi(x,y)}{H}.\] Let \(H=x^{\varepsilon/4},\) so that our errors so far are \(\ll \Psi(x,y)/x^{\varepsilon/200}\leq (h/x)\Psi(x,y)/x^{\varepsilon/400}\leq (h/x)\Psi(x,y)/x^{1/K},\)  provided that we take \(\theta\geq 1-\varepsilon/400\) so that \(h/x\geq x^{\theta}/x\geq 1/x^{\varepsilon/400}.\) 
The main term is handled as before.

\subsection{Counting primes} Now suppose \(y\geq x^{1/100}\)   and \(x/y^{1/2-\delta}\leq h\leq x,\) for small \(\delta>0\) and for \(x\) sufficiently large.  (As in Section \ref{largey}, the following argument works even when replacing \(1/100\) with an  arbitrarily small  \(\delta'>0.\)) In particular, with \(\delta=\varepsilon,\) this covers the case when \(x^{1-2\varepsilon}\leq y\leq 2x\) and \(x^{1/2+10\varepsilon}\leq h\leq x.\)

Consider our direct approach counting primes in short intervals. As \(\Re(\rho)=1/2,\) and the number of non-trivial zeros with imaginary height bounded in magnitude by \(T\) is \(\ll T\log T,\) we have  \[\sum_{x'<n\leq x'+h'}\Lambda(n)=h'+O\left(\frac{h'}{\sqrt{x'}}T\log T\right)+O\left(\frac{x'(\log x')^2}{T}\right).\] Choose \((x'/h')(x')^{\delta/2} \leq T\leq (x')^{1/2-\delta/2},\) which is always possible provided \(h'\geq (x')^{1/2+\delta}.\)   If so, both error terms become \(\ll h'(\log x')^2/(x')^{\delta/2}.\) As in Section \ref{largey}, this leads to  \[\sum_{x'<p\leq x'+h'}1=\frac{h'}{\log x'}\left(1+O\left(\frac{(\log x')^2 }{(x')^{\delta/2}}\right)+O\left(\frac{h'}{x'\log x'}\right)\right).\]

Now consider the triple sum in \eqref{primesum}. With \(x'=x/r\) and \(h'=h/r,\)  we see that
  \(h'\geq (x')^{1/2+\delta}\) if and only if \(h\geq x^{1/2+\delta} r^{1/2-\delta}.\) If \(r\leq x/y,\) we indeed have  
  \[x^{1/2+\delta}r^{1/2-\delta}\leq x^{1/2+\delta}(x/y)^{1/2-\delta}= x/y^{1/2-\delta}\leq h.\] Then the triple sum in \eqref{primesum} on the range \(p_1\dots p_{k-1} m\leq x/y\) has a main term of \[h\sum_{1\leq k\leq u}(-1)^k\sum_{\substack{p_1,\dots,p_{k-1}>y,~ m\geq 1:\\ p_1\dots p_{k-1}m\leq x/y}} \frac{1}{p_1\dots p_{k-1}m\log (x/(p_1\dots p_{k-1}m))}\] with error terms of 
\[\ll\sum_{1\leq k\leq u+1}~\sum_{\substack{p_1,\dots,p_{k-1}>y,~ m\geq 1:\\ p_1\dots p_{k-1}m\leq x/y}}\left(\frac{h\log (x/p_1\dots p_{k-1}m)}{x^{\delta/2}(p_1\dots p_{k-1}m)^{1-\delta/2}}+\frac{h^2}{xp_1\dots p_{k-1}m(\log y)^2}\right).\]   
	By the reasoning of \eqref{rbound}, and comparison with an integral, the first error term is \[\ll\frac{h\log x}{x^{\delta/2}}\sum_{1\leq k\leq u+1}\sum_{r\leq x/y}\frac{1}{r^{1-\delta/2}}\ll\frac{h\log x}{\delta x^{\delta/2}}\left(\frac{x}{y}\right)^{\delta/2}\ll\frac{h\log x}{\delta y^{\delta/2}}.\] Similarly, the second error term is \(\ll h^2/(x\log x).\) The range \(p_1\dots p_{k-1} m >x/y\) in \eqref{primesum} is dealt with exactly as before.  We arrive at \[\Psi(x+h,y)-\Psi(x,y)=h\left(g(x,y)+O\left(\frac{\log x}{\delta y^{\delta/2}}\right)+O\left(\frac{h+y}{x\log x}\right)\right),\] where \[g(x,y)=1+\sum_{1\leq k\leq u}(-1)^k\sum_{\substack{p_1,\dots,p_{k-1}>y,~ m\geq 1:\\ p_1\dots p_{k-1}m\leq x/y}} \frac{1}{p_1\dots p_{k-1}m\log (x/(p_1\dots p_{k-1}m))}.\] Once again, \(\Psi(x,y)\gg x\) on this range of \(y.\) 
	
	Finally, we cover the remaining range \(x^{1/2-\varepsilon}< y< x^{1-2\varepsilon}\) by adapting the approach of Section \ref{proofthms}. This completes the proof of Theorem \ref{mainriemann}.
\bibliography{main}
\bibliographystyle{amsalpha}

\end{document}